\newcolumntype{L}{>{\raggedright\arraybackslash}X}
\algnewcommand\algorithmicinput{\textbf{Initialization:}}
\algnewcommand\Init{\item[\algorithmicinput]}
\algnewcommand\Adv{\item[\textbf{Adversary initialization:}]}
\algnewcommand\Given{\item[\textbf{Given:}]}
\algnewcommand\Input{\item[\textbf{Input:}]}
\newcommand{\R}{\mathbb{R}}
\newcommand{\sel}{{\texttt{SEL}}}
\newcommand{\ncbc}{\mathrm{NCBC}}
\newcommand{\st}[1]{\mathsf{#1}}
\newcommand{\ep}{\varepsilon}
\newcommand{\cl}[1]{\mathcal{#1}}
\newcommand{\dist}{\mathrm{dist}}
\newcommand{\dm}{\mathrm{diam}}
\newcommand{\ol}[1]{\overline{#1}}
\newcommand{\ul}[1]{\underline{#1}}
\newcommand{\Sn}{\mathbb{S}}
\newcommand{\thmref}[1]{Theorem \ref{#1}}
\newcommand{\lemref}[1]{Lemma \ref{#1}}
\newcommand{\corref}[1]{Corollary \ref{#1}}
\newcommand{\defref}[1]{Definition \ref{#1}}
\newcommand{\figref}[1]{Figure \ref{#1}}
\newcommand{\secref}[1]{Section \ref{#1}}
\newcommand{\aspref}[1]{Assumption \ref{#1}}
\DeclareMathOperator*{\argmin}{arg\,min}
\newcommand{\norm}[1]{\left\lVert#1\right\rVert}
\newcommand{\anon}{\:\cdot\;}
\theoremstyle{plain}
\newtheorem{thm}{Theorem}[section]
\newtheorem*{thm*}{Theorem}
\newtheorem*{thmset*}{Setup}
\newtheorem{lem}[thm]{Lemma} 
\newtheorem*{lem*}{Lemma}
\newtheorem{coro}[thm]{Corollary}
\newtheorem*{coro*}{Corollary}
\newtheorem{prop}[thm]{Proposition}
\newtheorem*{prop*}{Proposition}
\theoremstyle{definition}
\newtheorem{defn}[thm]{Definition} 
\newtheorem*{defn*}{Definition} 
\newtheorem*{notset*}{Notation}
\newtheorem{asp}[thm]{Assumption}
\newtheorem{ex}{Example}
\newtheorem*{ex*}{Example}
\newtheorem{rem}{Remark}
\theoremstyle{break}
\newtheorem*{breakdefn*}{Definition} 
\newcommand{\partpl}{\cl{T}}
\newcommand{\parttupl}{(\mathbb{T},\st{K},d)}
\newcommand{\tpiv}[1]{M^\pi_{#1}} 
\newcommand\Tstrut{\rule{0pt}{2.6ex}}       
\newcommand\Bstrut{\rule[-0.9ex]{0pt}{0pt}} 
\newcommand{\TBstrut}{\Tstrut\Bstrut} 
\let\cite\citep
\newif\ifaistats
\newif\ifarxiv
\begin{document}

\title{Online Robust Control of Nonlinear Systems\\ with Large Uncertainty\textsuperscript{1}}

\author{\name Dimitar Ho \email dho@caltech.edu \\
       \addr Caltech, Department of Control Dynamical Systems 
       \AND
       \name Hoang M. Le \email hoang.le@microsoft.com \\
       \addr Microsoft Research
       \AND
       \name John C. Doyle \email doyle@caltech.edu \\
       \addr Caltech, Department of Control Dynamical Systems
       \AND
       \name Yisong Yue \email yyue@caltech.edu \\
       \addr Caltech, Department of Computing Mathematical Sciences
}

\maketitle


\noindent 

\begin{abstract}
Robust control is a core approach for controlling systems with performance guarantees that are robust to modeling error, and is widely used in real-world systems. However, current robust control approaches can only handle small system uncertainty, and thus require significant effort in system identification prior to controller design. We present an online approach that robustly controls a nonlinear system under large model uncertainty. Our approach is based on decomposing the problem into two sub-problems, ``robust control  design''  (which assumes small model uncertainty) and ``chasing consistent models'', which can be solved using existing tools from control theory and online learning, respectively. We provide a learning convergence analysis that yields a finite mistake bound on the number of times performance requirements are not met and can provide strong safety guarantees, by bounding the worst-case state deviation. To the best of our knowledge, this is the first approach for online robust control of nonlinear systems with such learning theoretic and safety guarantees. We also show how to instantiate this framework for general robotic systems, demonstrating the practicality of our approach.

\end{abstract}

\section{Introduction}\label{sec:intro}
\footnotetext[1]{This is an extended version of a conference paper that appeared in AISTATS 2021.}
We study the problem of online control for nonlinear systems with large model uncertainty, under the requirement to provide upfront control-theoretic guarantees for the worst case online performance; by large uncertainty, we mean to say that we are given an arbitrarily large set of potential models, of which an \textit{unknown few} are exact descriptions of the true system dynamics. Algorithms with such capabilities can enable us to (at least partially) sidestep undertaking laborious system identification tasks prior to robust controller design.
Motivated by real-world control applications, we formulate a class of problems which allows to address in a unified way common control problems such as stabilization, tracking, disturbance rejection, robust set invariance, etc.. We introduce this as \textit{online control} with \textit{mistake guarantees} (OC-MG): We define a problem instance by specifying a desired system behavior and search for online control algorithms which can quantify, in terms of number of \textit{mistakes}, how often the online controlled system could deviate from this behavior in the worst-case (i.e: worst possible scenario of true system dynamics, disturbances, noise, etc.).

We propose a modular framework for OC-MG:
Use robust control to design a \textit{robust oracle } $\pi$, use online learning to design an algorithm $\sel$ which \textit{chases consistent models}, and fuse them together via a simple meta-algorithm; the end result is Algorithm \ref{alg:ApiselOCO}, which we refer to as $\cl{A}_\pi(\sel)$. 
Our approach is based on decomposing the original problem into the two independent sub-problems "robust oracle design" (ROD) and "consistent models chasing" (CMC) which for many problem instances can be readily addressed with existing tools from control theory (see \secref{sec:oracleasrc}) and online learning (see \secref{sec:selonline}). We demonstrate in \secref{sec:instantiation}, that for general robotic systems, we can solve CMC via competitive convex body chasing \citep{bubeck2020chasing,argue2019nearly,argue2020chasing,sellke2020chasing} and ROD using well-known robust control methods \citep{zhou1998essentials, spongrobustlinear87,Spongrobust92,freeman2008robust, mayne2011tube}.
Once suitable subroutines $\pi$ and $\sel$ are selected, we can provide online performance guarantees for the resulting control algorithm $\cl{A}_\pi(\sel)$ which hold in the large uncertainty setting:
\begin{itemize}[nosep]
\item \textit{Mistake guarantee:} A worst case bound on the total number of times desired system behavior is violated. See Theorem \ref{thm:red}, \ref{thm:redweak}, \ref{thm:redext}.
\item \textit{Safety guarantee:} A worst case norm bound on the state-trajectory. See Theorem \ref{thm:transient}.
\end{itemize}
To provide the above guarantees, $\pi$ and $\sel$ have to be solutions to a corresponding ROD and CMC sub-problem. In \secref{sec:oracleasrc} and \secref{sec:selonline} we discuss that in many problem settings this is not a restrictive assumption. In particular, assuming that ROD can be solved, merely ensures that the overall OC-MG problem is well-posed: The underlying control problem (i.e: assuming no uncertainty) has to be tractably solvable with robust control; It is clear that this is a bare minimum requirement to state a meaningful OC-MG problem. 
In \secref{sec:selonline} we discuss different versions of the CMC sub-problem and present a reduction to nested convex body chasing \cite{bubeck2020chasing} which is applicable for a large class of systems.

In \secref{sec:empirical_validation}, we follow our approach to design a high-performing control algorithm for a difficult nonlinear adaptive control problem: swinging-up a cartpole with large parametric uncertainty \textit{and} state constraints. We benchmark the performance of the online algorithm $\cl{A}_\pi(\sel)$ against the offline optimal algorithm over 900 problem settings (adversarially chosen system parameters, noise, disturbances) and show that $\cl{A}_\pi(\sel)$ performs only marginally worse than the optimal offline controller, which has access to the true system model.\\

\noindent Our framework reveals a promising connection between online learning and robust control theory which enables systematic and modular design of robust learning and control algorithms with provided safety and performance guarantees in the large uncertainty setting. To the best of our knowledge, this is the first time that a fundamental connection between the fields of online learning and control theory has ever been discovered in this context.

\subsection{Problem Statement}
\label{subsec:probstat}
Consider controlling a discrete-time \textit{nonlinear dynamical system} with system equations:
\begin{align}\label{eq:unknownsys}
  x_{t+1} = f^*(t,x_t,u_t),\quad f^* \in \cl{F},
\end{align}
where $x_t \in \cl{X}$ and $u_t\in\cl{U}$ denote the system state and control input at time step $t$ and $\cl{X}\times\cl{U}$ denotes the state-action space. We assume that $f^*$ is an \textit{unknown} function and that we only know of an \textit{uncertainty set} $\cl{F}$ which contains the true $f^*$.

\textbf{Large uncertainty setting. } We impose no further assumptions on $\cl{F}$ and explicitly allow $\cl{F}$ to represent arbitrarily large model uncertainties.

 \textbf{Control objective.} 
 The control objective is specified as a sequence $\bm{\cl{G}}=(\cl{G}_0,\cl{G}_1, \dots)$ of binary cost functions $\cl{G}_t:\cl{X}\times\cl{U} \mapsto \{0,1\}$, where each function $\cl{G}_t$ encodes a desired condition per time-step $t$: $\cl{G}_t(x_t,u_t)=0$ means the state $x_t$ and input $u_t$ meet the requirements at time $t$. $\cl{G}_t(x_t,u_t)=1$ means that some desired condition is violated at time $t$ and we will say that the system made a \textit{mistake} at $t$. 
 The performance metric of system trajectories $\bm{x}:=(x_0,x_1,\dots)$ and $\bm{u}:=(u_0,u_1,\dots)$ is the sum of incurred cost $\cl{G}_t(x_t,u_t)$ over the time interval $[0,\infty)$ and we denote this the \textit{total number of mistakes}:
 \begin{align} \label{eq:mistdef}
    \text{\# mistakes of }\bm{x},\bm{u} &=\sum^{\infty}_{t=0} \cl{G}_t(x_t,u_t). 
  \end{align}
For a system state-input trajectory $(\bm{x},\bm{u})$ to achieve an objective $\bm{\cl{G}}$, we want the above quantity to be finite, i.e.: eventually the system stops making mistakes and meets the requirements of the objective for all time. 

  \textbf{Algorithm design goal.} 
  The goal is to design an online decision rule $u_t=\cl{A}(t,x_t,\dots,x_0)$ such that \underline{regardless} of the unknown $f^*\in\cl{F}$, we are guaranteed to have finite or even explicit upper-bounds on the total number of mistakes \eqref{eq:mistdef} of the online trajectories. Thus, we require a strong notion of robustness: $\cl{A}$ can control any system \eqref{eq:unknownsys} with the certainty that the objective $\bm{\cl{G}}$ will be achieved after finitely many mistakes. 
  It is suitable to refer to our problem setting as \textit{online control} with \textit{mistake guarantees}.

\subsection{Motivation and related work}
\textit{How can we design control algorithms for dynamical systems with strong guarantees without requiring much a-priori information about the system?}\\

\noindent This question is of particular interest in safety-critical settings involving real physical systems, which arise in engineering domains such as aerospace, industrial robotics, automotive, energy plants \citep{vaidyanathan2016advances}. Frequently, one faces two major challenges during control design: guarantees and uncertainty.

\textbf{Guarantees.} Control policies can only be deployed if it can be certified in advance that the policies will meet desired performance requirements online. This makes the mistake guarantee w.r.t. objective $\bm{\cl{G}}$ a natural performance metric, as $\bm{\cl{G}}$ can incorporate control specifications such as tracking, safety and stability that often arise in practical nonlinear dynamical systems. The online learning for control literature mostly focused on linear systems or linear controllers \citep{dean2018regret, simchowitz2018learning,hazan2019nonstochastic, chen2020black}, with some emerging work on online control of nonlinear systems. One approach is incorporate stability into the neural network policy as part of RL algorithm \citep{donti2021enforcing}. Alternatively, the parameters of nonlinear systems can be transformed into linear space to leverage linear analysis \citep{kakade2020information, boffi2020regret}. These prior work focus on sub-linear regret bound, which is not the focus of our problem setup. We note that regret is not necessarily the only way to measure performance. For example, competitive ratio is an alternative performance metric for online learning to control \citep{goel2019online, shi2020online}. In addition, our mistake guarantee requirement is stricter than the no-regret criterion and more amenable to control-theoretic guarantees. Specifically, (fast) convergence of $\tfrac{1}{T}\sum^{T}_{t=0}\cl{G}_t(x_t,u_t)\rightarrow0$ does not imply the total number of mistakes $\sum^{\infty}_{t=0}\cl{G}_t(x_t,u_t)$ is bounded. We provide additional discussion on the large and growing literature on learning control for linear systems, as well as adaptive control techniques from control community in Appendix \ref{sec:app_related}. 

\textbf{Large uncertainty.} Almost always, the dynamics of the real system are not known exactly and one has to resort to an approximate model. The most common approach in online learning for control literature \citep{dean2017sample} is to perform system identification \citep{lennart1999system}, and then use tools from robust control theory \citep{zhou1998essentials}. Robust controller synthesis can provide policies with desired guarantees, if one can obtain an approximate model which is ``provably close enough'' to the real system dynamics. However, estimating a complex system to a desired accuracy level quickly becomes intractable in terms of  computational and/or sample complexity. In the adversarial noise setting, system identification of simple linear systems with precision guarantees can be NP-hard \citep{dahleh1993sample}. General approaches for nonlinear system identification with precision guarantees are for the most part not available (recently \citet{mania2020active} analyzed sample complexity under stochastic noise).

\subsection{Overview of our approach}
\textbf{An alternative to the pipeline SysID$\rightarrow$Control: start with rough models, learn to control online.} While accurate models of real systems are hard to obtain, it is often easy to provide more qualitative or rough models of the system dynamics \textit{without} requiring system identification. Having access to a rough system description, we aim to learn to control the real system from online data and provide control-theoretic guarantees on the online performance in advance.

\textbf{Rough models as compactly parametrizable uncertainty sets.} In practice, we never have the exact knowledge of $f^*$ in advance. However, for engineering applications involving physical systems, the functional form for $f^*$ can often be derived through first principles and application-specific domain knowledge. Conceptually, we can view the unknown parameters of the functional form as conveying both the `modeled dynamics' and `unmodeled (adversarial) disturbance' components of the ground truth $f^*$ in the system $x_{t+1} = f^*(t,x_t,u_t)$. 
It is almost always the case, that we can represent the uncertainty in $f^*$ via a collection of parameters in bounded ranges. How we choose to parametrize a given uncertainty set $\cl{F}$ is not unique and poses a design choice.

We will take this as the starting point for our approach and assume a fixed parametrization of $\cl{F}$ in the form of a tuple $(\mathbb{T},\st{K},d)$, where $(\st{K},d)$ is a compact metric space, called \textit{parameter space}, and $\mathbb{T}$ is a map $\st{K} \mapsto 2^\cl{F}$ which defines a collection of models $\{\mathbb{T}[\theta]\:|\:\theta\in\st{K}\}$ which represents a cover of the uncertainty set $\cl{F}$. We define this formally as a compact parameterization of $\cl{F}$:
 \begin{defn}\label{def:compactparameter}
  A tuple $(\mathbb{T},\st{K},d)$, where $\mathbb{T}:\st{K} \mapsto 2^\cl{F}$ is a \textit{compact parametrization} of $\cl{F}$, if $(\st{K},d)$ is a compact metric space and $\cl{F} \subset \bigcup_{\theta \in \st{K}} \mathbb{T}[\theta]$.
   \end{defn}
\noindent We will work with candidate parameters $\theta\in\st{K}$ of the system and consider a $\theta^*$ to be a \textit{true parameter} of $f^*$, if $f^*\in\mathbb{T}[\theta^*]$. Ideally, each candidate model $\mathbb{T}[\theta]$ has small uncertainty; the precise notion of "small uncertainty" however is problem specific and depends always on the objective.

For concreteness, we give several simple examples of common parameter spaces $\st{K}$:
\begin{enumerate}[nosep]
  \item \emph{Linear time-invariant system}: linear system with matrices $A$, $B$ perturbed by bounded disturbance sequence $\bm{w}\in\ell_\infty$, $\|\bm{w}\|_{\infty}\leq \eta$:
  \begin{align}\label{eq:lin}
      f^*(t,x,u) = Ax+Bu + w_t.
  \end{align}
  The parameter space $\st{K}$ contains bounded intervals describing parameters $\theta =(A,B,\eta)$.
  \item \emph{Nonlinear system, linear parametrization}: nonlinear system, where dynamics are a weighted sum of nonlinear functions $\psi_i$ perturbed by bounded disturbance sequence $\bm{w}\in\ell_\infty$, $\|\bm{w}\|_{\infty}\leq \eta$:
  \begin{align}\label{eq:nonlinlinp}
      f^*(t,x,u) = \sum^{M}_{i=1} a_i \psi_i(x,u)+ w_t.
  \end{align}
  $\st{K}$ contains bounded intervals describing $\theta =(\{a_i\},\eta)$. 
  
  \item \emph{Nonlinear system, nonlinear parametrization}: nonlinear system, with function $g$ parametrized by fixed parameter vector $p\in\R^m$ (e.g., neural networks), perturbed by bounded disturbance sequence $\bm{w}\in\ell_\infty$, $\|\bm{w}\|_{\infty}\leq \eta$:
  \begin{align}\label{eq:nonlinnlp}
      f^*(t,x,u) = g(x,u;p)+ w_t.
  \end{align}
  \noindent $\st{K}$ contains bounded intervals describing $\theta =(p,\eta)$. 
  \end{enumerate}
 \noindent In these examples, the uncertainty set $\cl{F}\subset\cup_{\theta\in\st{K}}\mathbb{T}[\theta]$ is covered by models $\mathbb{T}[\theta]$ with smaller uncertainty of the form $\mathbb{T}[\theta]=\{ t,x,u \mapsto f_\theta(x, u, w_t)\:|\: \|\bm{w}\|_{\infty} \leq \eta \}$, where $f_\theta$ denotes one of the functional forms on the right-hand side of eq. \eqref{eq:lin}, \eqref{eq:nonlinlinp} or \eqref{eq:nonlinnlp}.\\

\noindent \textbf{Online robust control algorithm.} Given a compact parametrization $(\mathbb{T},\st{K}, d)$ for the uncertainty set $\cl{F}$, we design  meta-algorithm $\cl{A}_\pi(\sel)$ (Algorithm \ref{alg:ApiselOCO}) that controls the system \eqref{eq:unknownsys} online by invoking two sub-routines $\pi$ and $\sel$ in each time step. 
\begin{itemize}[nosep]
    \item \emph{Consistent model chasing.} Procedure $\sel$ receives a finite data set $\cl{D}$, which contains state and input observations, and returns a parameter $\theta\in\st{K}$.\\
    \textit{Design goal}: For each time $t$, the procedure $\sel$ should select $\theta_t$ such that the set of models $\mathbb{T}[\theta_t]$ stays  ``consistent'' with $\cl{D}_t$, i.e., candidate models in $\mathbb{T}[\theta_t]$ can \emph{explain} the past data. Moreover posited parameters $\theta_t$ should only change when necessary: two posited parameters $\theta_t$ and $\theta_{t'}$ should not be very different from each other, if both data sets $\cl{D}_t$ and $\cl{D}_{t'}$ contain ``similar'' amount information.
    \item \emph{Robust oracle.} Procedure $\pi$ receives a posited system parameter $\theta\in\st{K}$ as input and returns a control policy $\pi[\theta]:\mathbb{N}\times \cl{X} \mapsto \cl{U}$ which can be evaluated at time $t$ to compute a control action $u_{t} = \pi[\theta](t, x_{t})$ based on the current state $x_{t}$.\\
    \textit{Design goal:}
    We require that $\pi$ represents a robust control design subroutine for the collection of models $\mathbb{T}$, in the sense that policy $\pi[\theta]$ could provide mistake guarantees for $\bm{\cl{G}}$ which are robust to bounded noise \textbf{if} the uncertainty set $\cl{F}$ \textit{were} $\mathbb{T}[\theta]$.
\end{itemize}
  
\begin{algorithm}[t]
  \caption{Meta-Implementation of $\cl{A}_{\pi}(\sel)$ for (OC-MG) }
  \label{alg:ApiselOCO}
  \begin{small}
\begin{algorithmic}[1]
  \Require{procedures $\pi$ and $\sel$}
  \Init $\cl{D}_0 \leftarrow \{\}$, $x_0$ is set to initial condition $\xi_0$
  \For{$t=0,1,\dots$ {\bfseries to} $\infty$} 
  \State{$\cl{D}_t\leftarrow $ append $(t, x_t, x_{t-1}, u_{t-1})$ to  $\cl{D}_{t-1}$ (if $t\geq 1$)} \Comment{update online history of observations}
  \State{$\theta_t \leftarrow \sel[\cl{D}_t]$} \Comment{present online data to $\sel$, get posited parameter $\theta_t$}
   \State{$u_t \leftarrow \pi[\theta_t](t,x_t)$} \Comment{ query $\pi$ for policy $\pi[\theta_t]$ and evaluate it}
  \State{$x_{t+1} \leftarrow f^*(t,x_t,u_t)$} \Comment{system transitions with unknown $f^*$ to next state}
  \EndFor 
\end{algorithmic}
\end{small}
\end{algorithm}



\paragraph{Theoretical contribution.} Our main theoretical results certify safety- and finite mistake guarantees for the online control scheme $\cl{A}_\pi(\sel)$ if the sub-routines $\pi$ and $\sel$ meet the design requirement for ``robust oracle'' and ``consistent model chasing'' for a given uncertainty set $\cl{F}$ and objective $\bm{\cl{G}}$. 
We will clarify the consistency and robustness requirements of the sub-routines $\pi$ and $\sel$ in \secref{sec:result_pi} and \secref{sec:result_sel}. 
For now, we present an informal version of the finite mistake guarantees and worst-case state deviation for the online control scheme $\cl{A}_\pi(\sel)$:
\begin{thm*}[\textit{Informal}]
  \noindent For any (adversarial) $f^*\in \cl{F}$, the online control scheme $\cl{A}_\pi(\sel)$ described in Algorithm \ref{alg:ApiselOCO} guarantees a-priori that the trajectories $\bm{x}$, $\bm{u}$ will achieve the objective $\bm{\cl{G}}$ after finitely many mistakes. 
  The total number of mistakes $\sum^{\infty}_{t=0} {\cl{G}}_t(x_t,u_t)$ is at most
  $$ \textit{oracle performance }M^\pi_\rho *\Gamma_1\left(\frac{\textit{size of uncertainty }\cl{F}}{ \textit{efficiency of }\sel * \textit{robustness margin  }\rho \text{ of }\pi }\right),$$
  and the norm of the state $\|x_t\|$ is at most 
$$\Gamma_2\left(\frac{\textit{size of uncertainty }\cl{F}}{ \textit{efficiency of }\sel * \textit{single-step robustness margin of }\pi },\|x_0\|\right),$$
for some increasing function $\Gamma_1:\R^+\mapsto \R^+$ and some function $\Gamma_2:\R^+\mapsto \R^+$ which is increasing in the first argument and is linear in the second. 
\end{thm*}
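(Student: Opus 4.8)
The plan is to analyze $\cl{A}_\pi(\sel)$ along the structure of Algorithm~\ref{alg:ApiselOCO}: first bound how often the posited parameter $\theta_t$ moves appreciably, then on each time-window where $\theta_t$ is essentially frozen invoke the robust-oracle guarantee, and finally sum. Fix an adversarial $f^*\in\cl{F}$; since $\cl{F}\subset\bigcup_{\theta\in\st{K}}\mathbb{T}[\theta]$ there is a true parameter $\theta^*$ with $f^*\in\mathbb{T}[\theta^*]$. The consistency requirement on $\sel$ (to be fixed in \secref{sec:result_sel}) will guarantee that $\theta^*$ is always an admissible output of $\sel[\cl{D}_t]$ --- the true model always explains the true data --- so the family of ``consistency sets'' that $\sel$ chases is a nested, nonempty, shrinking family in the compact metric space $(\st{K},d)$. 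Using the competitive / bounded-movement property of $\sel$ (its \emph{efficiency}, instantiated via nested convex body chasing in \secref{sec:selonline}), the total distance travelled by the sequence $(\theta_t)_t$ is bounded by the size $\dm(\st{K})$ of the uncertainty, scaled by that efficiency constant. Partition the time axis into maximal epochs on which $\theta_t$ stays inside a ball of radius $\rho$ (the oracle's robustness margin); then the number $N$ of epochs is at most the total travelled distance divided by $\rho$, i.e. $N\le\Gamma_1\!\big(\dm(\st{K})/(\text{efficiency}\cdot\rho)\big)$ for a suitable increasing $\Gamma_1$ absorbing the covering/ceiling overhead.

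The second ingredient is a ``consistency $\Rightarrow$ simulatability'' lemma: on an epoch where $\theta_t$ stays within $\rho$ of some $\bar\theta$ and $\cl{D}_t$ is consistent with $\mathbb{T}[\bar\theta]$, the realized closed-loop trajectory restricted to that epoch coincides with a trajectory that some model in $\mathbb{T}[\bar\theta]$ produces under an admissible disturbance whose magnitude is controlled by $\rho$. Granting this, the defining property of a \emph{robust oracle} --- that $\pi[\bar\theta]$ enforces $\bm{\cl{G}}$ with at most $M^\pi_\rho$ mistakes, robustly to noise of that magnitude, \emph{were} the uncertainty set equal to $\mathbb{T}[\bar\theta]$ --- yields at most $M^\pi_\rho$ mistakes accrued during that epoch (with $\pi[\bar\theta]$ effectively ``restarted'' at the epoch boundary). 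Summing over the at most $N$ epochs gives $\sum_{t\ge0}\cl{G}_t(x_t,u_t)\le M^\pi_\rho\cdot\Gamma_1(\cdots)$; in particular the sum is finite, so after the last epoch the controlled system makes no further mistakes and $\bm{\cl{G}}$ is achieved.

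For the safety bound I would not wait for $\theta_t$ to stabilize but propagate a uniform state bound through the same epoch decomposition. Here the relevant property of $\pi$ is its \emph{single-step} robustness margin: even when $\bar\theta$ is a grossly wrong guess for $f^*$, a single application of the closed loop $x_{t+1}=f^*(t,x_t,\pi[\bar\theta](t,x_t))$ expands the state norm by at most a bounded factor; and within an epoch where the guess is (approximately) right, the robust oracle additionally enforces an ISS / robust-invariance-type estimate affine in the state entering the epoch. Composing these --- a bounded multiplicative jump across each of the $N$ epoch boundaries, and an affine, eventually non-expansive map inside each epoch --- yields a closed-form bound of the form $\Gamma_2\!\big(\dm(\st{K})/(\text{efficiency}\cdot(\text{single-step margin})),\,\|x_0\|\big)$ that is increasing in the first argument and affine (hence ``linear'') in $\|x_0\|$, as claimed.

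\textbf{The main obstacle.} I expect the crux to be making the ``consistency $\Rightarrow$ simulatability with bounded disturbance'' step precise and, crucially, matching its disturbance level to the exact noise margin $\rho$ that $\pi$ was promised to tolerate: $\sel$ only certifies consistency against the finite data actually observed along the realized trajectory, whereas the robust-oracle guarantee is a statement about \emph{all} trajectories of $\mathbb{T}[\bar\theta]$ under admissible disturbances. Upgrading ``explains the past data'' to ``is, on this epoch, an admissible noisy instance of $\mathbb{T}[\bar\theta]$'' with a quantitatively controlled noise bound, and ensuring that the epoch/$\rho$-ball bookkeeping introduces no circular dependence between the number $N$ of epochs and the per-epoch mistake and state bounds, is where the real work lies. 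By contrast, the competitive-chasing bound on $N$ and the ISS-style bookkeeping behind $\Gamma_2$ should be comparatively routine once the formal definitions of ``robust oracle'' and ``consistent chasing'' in \secref{sec:result_pi}--\secref{sec:result_sel} are in hand.
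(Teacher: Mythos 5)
Your proposal follows essentially the same route as the paper's proofs of Theorems \ref{thm:red} and \ref{thm:transient}: partition time into epochs on which the posited parameter stays in a $\rho/2$-ball, bound the number of epochs by the total chasing movement ($\leq \gamma\,\dm(\st{K})$) divided by $\rho/2$, charge each epoch at most $M^\pi_\rho$ mistakes via the robust-oracle definition, and propagate the state norm through the single-step robustness inequality $\|x_{t+1}\|\leq \alpha\, d(\theta_{t+1},\theta_t)\|x_t\|+\beta$ using the bounded total parameter travel. The one step you flag as the crux --- upgrading ``consistent with the observed data'' to ``an admissible $\rho$-noisy trajectory of some $\mathbb{T}[\bar\theta]$'' --- is closed in the paper by anchoring each epoch $\cl{I}_k$ to its \emph{end-of-epoch} parameter $b_k=\theta_{\ol{t}_k}\in\st{P}(\cl{D}_{\ol{t}_k})$: by nestedness of the consistent sets some $f\in\mathbb{T}[b_k]$ reproduces every transition inside the epoch exactly (zero unexplained residual), while the triangle inequality puts every applied $\theta_t$ within $\rho$ of $b_k$, so the realized segment lies in $\cl{S}_{\cl{I}_k}[\rho;b_k]$ with the $\theta_t$ playing the role of the noisy parameter measurements and no further disturbance-matching needed.
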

\begin{itemize}[nosep]
\item \textit{Performance of }$\pi$: Assume the worst-possible $f^*\in\cl{F}$, but also access to direct online measurements $\theta_t =\theta^*+v_t$ of the a true parameter $\theta^*$ with small noise $v_t$ of size $\rho$; 
$M^\pi_\rho$ denotes the worst-case \#mistakes if we were to apply the almost ideal control law $u_t=\pi[\theta_t](t,x_t)$ in this setting.
\item \textit{Efficiency of }$\sel$: 
We quantify efficiency of $\sel$ in the result through competitive analysis of online algorithms.
The procedure $\sel$ posits parameters efficiently, if as a function of time the parameter selection $\theta_t$ changes only when necessary, that is, it only changes when new observations are informative and keeps a constant value otherwise. We phrase this in terms of a \textit{competitive ratio} $\gamma$ (with $\gamma\geq1$) and distinguish here between $\gamma$-competitive and $(\gamma,T)$-weakly competitive algorithms. The smaller the constant $\gamma$ is, the more efficient the algorithm posits parameters: As discussed in \secref{sec:result_sel}, a smaller $\gamma$ indicates that an online algorithm performs closer to the ideal in-hindsight-optimal algorithm.
\end{itemize}
\begin{rem}
  If the same procedure $\pi$ serves as a robust oracle for a set of criteria $\bm{\cl{G}^{(1)}}, \bm{\cl{G}^{(2)}}$, $\dots$, $ \bm{\cl{G}^{(M)}}$, then correspondingly the instantiation $\cl{A}_\pi(\sel)$ provides multiple finite mistake guarantees, i.e. one for each corresponding criteria $\bm{\cl{G}^{(i)}}$, $i=1,\dots,M$.
  \end{rem}
  \noindent This approach brings several attractive qualities: 
\begin{itemize}[nosep]
    \item \emph{Generality.} The result applies to a wide range of problem settings. The objective $\bm{\cl{G}}$ and uncertainty set $\cl{F}$ serve as a flexible abstraction to represents a large class of dynamical systems and control-theoretic performance objectives.
    \item \emph{Robust guarantees in the large uncertainty setting.}
    Our result applies in settings where only rough models are available. As an example, we can use the result to provide guarantees in control settings with unstable nonlinear systems where stabilizing policies are \textbf{not} known a-priori and which are subject to online adversarial disturbances. 
    \item \emph{Decoupling algorithm design for learning and control.} The construction of the ``robust oracle'' $\pi$ and the consistent model chasing procedure $\sel$ can be addressed with existing tools from control and learning.  More generally, this perspective enables to decouple for the first time learning and control problems in the large uncertainty setting into separate robust control and online learning problems. See discussion in \secref{sec:oracleasrc} and \secref{sec:selonline}.
    \item \textit{Modular algorithm design for robust learning and control. }The above approach provides a first interface between robust control and online learning, which enables a modular design of learning and control algorithms with versatile worst-case performance guarantees against large model uncertainty. 
    \item \textit{A new tool for performance analysis of learning and control algorithms. }
    We can view the above theorem also from an analysis point-of view: Many existing certainty-equivalence based learning and control algorithms can be easily represented as an instance of the meta-algorithm $\cl{A}_{\pi}(\sel)$ and thus can be analyzed using the above theorem.
\end{itemize}

\noindent\textbf{Promising for design of efficient algorithms in practice. }
    Besides focusing on providing worst-case guarantees in a general setting, empirical results show that our framework is a promising approach to design efficient algorithms for learning and control in practice. In \secref{sec:empirical_validation}, we apply our approach to the problem of swinging-up a cartpole with large parametric uncertainty in a realistic and highly challenging setting and show that it achieves consistently (over 900 experiments with different parameter settings) good performance.
\section{Main result}\label{sec:mainresult}
As summarized in Algorithm \ref{alg:ApiselOCO}, the main ingredients of our approach are a robust control oracle $\pi$ that returns a robust controller under posited system parameters, and an online algorithm $\sel$ that chases parameters sets that are consistent with the data collected so far.  Here we expand on the desired conditions of $\pi$ and $\sel$. 

\subsection{Required conditions on procedure $\pi$} 
\label{sec:result_pi}
\textbf{Online control with oracle under idealized conditions.} Generally, procedure $\pi$ is a map $\st{K}\mapsto \cl{C}$ from parameter space $\st{K}$ to the space $\cl{C}:=\{\kappa:\mathbb{N}\times \cl{X} \mapsto \cl{U} \}$ of all (non-stationary) control policies of the form $u_t = \kappa(t,x_t)$.
A desired property of $\pi$ as an \textit{oracle} is that $\pi$ returns controllers that satisfy $\bm{\cl{G}}$ if the model uncertainty \emph{were} small.
In other words, if the uncertainty set $\cl{F}$ \textit{were} contained in the set $\mathbb{T}[\theta]$, then control policy $\pi[\theta]$ could guarantee to achieve the objective $\bm{\cl{G}}$ with finite mistake guarantees. 
 Further, in an idealized setting where the true parameter \emph{were} known exactly, the oracle should return policy such that the system performance is robust to some level of bounded noise -- which is a standard notion of \textit{robustness}. We make this design specification more precise below and discuss how to instantiate the oracle in Section \ref{sec:oracleasrc}.

\textbf{Idealized problem setting.} Let $\theta^*$ be a parameter of the true dynamics $f^*$, and assume that online we have access to noisy observations $\bm{\theta}=(\theta_0,\theta_1,\dots)$, where each measurement $\theta_t$ is $\rho$-close to $\theta^*$, under metric $d$. The online control algorithm queries $\pi$ at each time-step and applies the corresponding policy $\pi[\theta_t]$. The resulting trajectories obey the equations:
\begin{subequations}\label{eq:Srhocond}
    \begin{align}
    x_{t+1} &= f^*(t,x_t,u_t),\quad u_t = \pi[\theta_t](t,x_t)\\
    \theta_t &\text{ s.t.: }d(\theta_t,\theta^*) \leq \rho, \quad\text{ where } f^*\in\mathbb{T}[\theta^*]
\end{align}
\end{subequations}
To facilitate later discussion, define the set of all feasible trajectories of the dynamic equations \eqref{eq:Srhocond} as the \textit{nominal trajectories} $\cl{S}_{\cl{I}}[\rho;\theta]$ of the oracle: 
\begin{defn}\label{def:pi-rhorobust}
    For a time-interval $\cl{I}=[t_1,t_2]\subset \mathbb{N}$ and fixed $\theta\in\st{K}$, let $\cl{S}_{\cl{I}}[\rho;\theta]$ denote the set of all pairs of finite trajectories $x_{\cl{I}}:=(x_{t_1},\dots,x_{t_2})$, $u_{\cl{I}}:=(u_{t_1},\dots,u_{t_2})$ which for $\theta^*=\theta$, satisfy conditions \eqref{eq:Srhocond} with some feasible $f^*$ and sequence $(\theta_{t_1},\dots,\theta_{t_2})$.
\end{defn}
\textbf{Design specification for oracles.} We will say that $\pi$ is $\rho$-robust for some objective $\bm{\cl{G}}$, if all trajectories in $\cl{S}_{\cl{I}}[\rho;\theta]$ achieve $\bm{\cl{G}}$ after finitely many mistakes. We distinguish between robustness and uniform robustness, which we define precisely below:

\begin{defn}[robust oracle]\label{def:pi-rhorobust}
    Equip $\cl{X}$ with some norm $\|\:\|$. For each $\rho,\gamma\geq 0$ and $\theta\in\st{K}$, define the quantity $m^\pi_\rho(\gamma;\theta)$ as
    $$m^\pi_\rho(\gamma;\theta) := \sup\limits_{\cl{I}=[t,t']\::\:t<t'}\:\:\sup\limits_{(x_\cl{I},u_\cl{I})\in\cl{S}_{\cl{I}}[\rho;\theta], \|x_0\| \leq \gamma}\sum\limits_{t\in\cl{I}}\:\: \cl{G}_t(x_t,u_t)$$
    If $m^\pi_0(\gamma;\theta)<\infty$ for all $\gamma\geq 0$, $\theta\in\st{K}$, we call $\pi$ an \textit{oracle} for $\bm{\cl{G}}$ w.r.t. parametrization $(\mathbb{T},\st{K},d)$. In addition, we say an oracle $\pi$ is (locally) (uniformally) $\rho$-robust, if the corresponding property shown in Table \ref{tab:robustness} holds.
    \begin{table}[t]
        \centering
        \begin{tabular}{| r | c |} 
            \hline
            {$\rho$-robust} & $\forall \theta\in\st{K}:\: \sup_{\gamma \geq 0} m^\pi_\rho(\gamma;\theta)<\infty$ \TBstrut \\
            \hline
            {uniformly $\rho$-robust} & ${M}^\pi_\rho:=\sup_{\gamma \geq 0, \theta \in \st{K}} {m}^\pi_\rho(\gamma;\theta)<\infty$  \TBstrut \\
            \hline 
             {locally $\rho$-robust} & $\forall \gamma \geq 0,\: \theta \in \st{K}:\: m^\pi_\rho(\gamma;\theta)<\infty$ \TBstrut \\ 
             \hline
             {locally uniformly $\rho$-robust} & $\forall \gamma \geq 0:\:M^\pi_\rho(\gamma):= \sup_{\theta\in\st{K}} m^\pi_\rho(\gamma;\theta)<\infty$ \TBstrut \\
             \hline
        \end{tabular}
        \caption{Notions of oracle-robustness}
        \label{tab:robustness}
    \end{table}
    If it exists, ${M}^\pi_\rho$ is the \textit{mistake constant}/\textit{function} of $\pi$.
\end{defn}
\noindent The constant $\rho>0$ will be referred to as the robustness margin of $\pi$. If we use the above terms without referencing $\rho$, it should be understood that there exist some $\rho>0$ for which the corresponding property is feasible.
The mistake constant $M^\pi_\rho$ can be viewed as a robust offline benchmark: It quantifies how many mistakes we would make in the worst-case, if we could use the oracle $\pi$ under idealized conditions, i.e., described by \eqref{eq:Srhocond}.

\textbf{Invariance Property.}
On top of $\rho$-robustness, for some results, we will require the following additional condition from the oracle:

\begin{defn}\label{def:pi-invariant}
    For a fixed objective $\bm{\cl{G}}$, define the set of admissable states at time $t$ as $\st{X}_{t}= \{x\:|\: \exists u': \:\cl{G}_{t}(x,u')=0\}$, i.e.: the set of states for which it is possible to achieve zero cost at time step $t$.
    We call a $\rho$-robust oracle $\pi$ \textit{cost invariant}, if for all ${\theta}\in\st{K}$ and $t\geq 0$ the following holds 
      \begin{itemize}
          \item For all $x\in\st{X}_{t}$ holds $\cl{G}_t(x,\pi[{\theta}](t,x))=0$.   
           \item For all $x\in \st{X}_{t}$, $f\in\mathbb{T}[\theta]$ and $\theta' $ s.t. $d(\theta',\theta)\leq \rho$, holds $f(t,x,\pi[{\theta'}](t,x))\in\st{X}_{t+1}$, 
      \end{itemize}
  \end{defn}
\begin{rem}
    The above condition is related to the well-known notion of \textit{positive set/tube invariance} in control theory \cite{blanchini1999set}:
    The above condition requires that the oracle policies $\pi[\theta]$ can ensure for their nominal model $\mathbb{T}(\theta)$ the following closed loop condition: $x_t\in\st{X}_t \implies x_{t+1}\in\st{X}_{t+1}$, $\forall t$. 
\end{rem}

\textbf{Robustness of single-step closed loop transitions.}
To provide worst-case guarantees on the online state trajectory, we need to bound how system uncertainty can affect a single online time-step state transition in the worst-case. To this end, consider we equip the state space $\cl{X}$ with some norm $\|\anon\|$ and define a desired property for the oracle in terms of its performance in the idealized scenario: 
\begin{defn}\label{def:smooth}
$\pi:\st{K}\mapsto \cl{C}$ is $(\alpha,\beta)$-single step robust in the space $(\cl{X},\|\anon\|)$ if for any $2$-time-steps nominal trajectory $(x_{t+1},x_t),(u_{t+1},u_t) \in \cl{S}_{[t,t+1]}[\rho;\theta]$ holds $\|x_{t+1}\| \leq \alpha \rho \|x_t\| + \beta.$
\end{defn}
\noindent The above property requires that in the idealized setting \eqref{eq:Srhocond}, we can uniformly bound the single-step growth of the state by a scalar linear function in the noise-level $\rho$ and the previous state norm. Equivalently, we can explicitly write out condition in \defref{def:smooth} as:
$$\exists \alpha,\beta>0:\quad \forall \theta,\theta'\in\st{K}, x\in\cl{X},f \in \mathbb{T}[\theta], t\geq 0:\:\|f(t,x,\pi[\theta'](t,x))\| \leq \alpha d(\theta,\theta')\|x\| + \beta.$$
\subsection{Required conditions on procedure $\sel$}
\label{sec:result_sel}
We now describe required conditions for $\sel$, and will discuss specific strategies to design $\sel$ in Section \ref{sec:selonline}. 

Let $\mathbb{D} :=\left\{(d_1,\dots,d_N)\:|\:d_i \in \mathbb{N}\times\cl{X}\times\cl{X}\times \cl{U},\: N<\infty \right\}$ be the space of all data sets $\cl{D}=(d_1,\dots,d_N)$ of time-indexed data points $d_i=(t_i,x^+_i,x_i,u_i)$.
We will call a data sequence $\bm{\cl{D}}=(\cl{D}_1,\cl{D}_2,\dots)$ an online stream if the data sets $\cl{D}_t=(d_1,\dots,d_t)$ are formed from a sequence of observations $(d_1,d_2,\dots)$.

\textbf{Consistent models and parameters.} Intuitively, given a data set $\cl{D} = (d_1,\dots,d_N)$ of tuples $d_i=(t_i, x^+_i,x_{i},u_{i})$, \emph{any} candidate $f\in\cl{F}$ which satisfies $x^+_i=f(t_i,x_i,u_i)$ for all $1\leq i \leq N$ is \textit{consistent} with $\cl{D}$; Similarly, we will say that $f$ is \textit{consistent} with an online stream $\bm{\cl{D}}$, if at each time-step $t$, it is consistent with the data set $\cl{D}_t$. We will extend this definition to models $\mathbb{T}[\theta]$ and parameters $\theta\in\st{K}$: The model $\mathbb{T}[\theta]$ is a consistent model for a data set $\cl{D}$ or online stream $\bm{\cl{D}}$, if it contains at least one function $f$ which is consistent with $\cl{D}$ or $\bm{\cl{D}}$, respectively; $\theta$ is then called a consistent parameter. Correspondingly, we will define for some data set $\cl{D}$, the set of all consistent parameters as $\st{P}(\cl{D})$:

\begin{defn}[Consistent Sets]\label{def:Pt}
    The consistent set map $\st{P}:\mathbb{D}\mapsto 2^\st{K}$ returns for each data set $\cl{D}\in\mathbb{D}$ the corresponding set of consistent parameters $\st{P}(\cl{D})$:
    \begin{align}\label{eq:app_poly_0}
        \st{P}(\cl{D}) := \mathrm{closure}\big(\left\{\theta \in\st{K} \ \left| \exists f\in \mathbb{T}[\theta]\text{ s.t. }\begin{array}{l} \forall (t,x^+,x,u) \in \cl{D},\:\:  x^+ = f(t,x,u) \end{array}  \right. \right\}\big).
    \end{align}
\end{defn}

\textbf{Chasing conditions for selection \sel.} The first and basic design specification for subroutine {\sel} is to output a consistent parameter $\theta=\sel[\cl{D}]$ for a given data set $\cl{D}$, provided such a parameter $\theta\in\st{K}$ exists. This requirement can be precisely stated in terms of set-valued analysis: We want $\sel$ to act as a \textit{selection} map $\mathbb{D}\mapsto \st{K}$ of the consistent set map $\st{P}:\mathbb{D}\mapsto 2^\st{K}$.
\begin{defn}\cite{aubin2009set}.
    A function $f:X\mapsto Y$ is a selection of the set-valued map $F:X \mapsto 2^Y$, if $\forall x\in X:\: f(x) \in F(x)$.
\end{defn}
\noindent Since we intend to use $\sel$ in an online manner where we are given a stream of data $\bm{\cl{D}}$, in addition, we require that $\sel$ can posit consistent parameters $\theta_t=\sel[\cl{D}_t]$ in an efficient manner online. A fitting notion of "efficiency" can be defined by comparing the variation of the parameter sequence $\bm{\theta}$ and the sequence of consistent sets $\st{P}(\bm{\cl{D}}):=(\st{P}(\cl{D}_1),\st{P}(\cl{D}_2),\dots)$ over time, where we quantify the latter using the Hausdorff distance $d_H:2^\st{K}\times 2^\st{K}\mapsto \mathbb{R}^+$ defined as 
$$ d_H(\st{S},\st{S}') = \max\left\{\max\limits_{x\in\st{S}} d(x,\st{S}')\:,\: \max\limits_{y\in\st{S}'} d(y,\st{S})\right\}.$$
We phrase this in terms of desired "chasing"-properties (A)-(D) and refer to algorithms $\sel$ with such properties as consistent model chasing (CMC) algorithms:
\begin{defn}\label{def:Selprops}
    Let $\sel:\mathbb{D}\mapsto \st{K}$ be a selection of $\st{P}$. Let 
    $\bm{\cl{D}}=(\cl{D}_1,\cl{D}_2,\dots)$ be an online data stream and let $\bm{\theta}$ be a sequence defined for each time $t$ as $\theta_t=\sel[\cl{D}_t]$. Assume that there always exists an $f\in\cl{F}$ consistent with $\bm{\cl{D}}$ and consider the following statements:
    \begin{enumerate}[label=(\Alph*)]
        \item $\theta^*=\lim_{t \rightarrow \infty}\theta_t$ exists.
        \item $\lim_{t \rightarrow \infty}d(\theta_t,\theta_{t-1})=0.$
        \item $\gamma$-\textit{competitive}:
        $ \sum^{t_2}_{t=t_1+1} d(\theta_t,\theta_{t-1}) \leq \gamma \:d_H(\st{P}(\cl{D}_{t_2}),\st{P}(\cl{D}_{t_1}))$ holds for any $t_1<t_2$.
        \item $(\gamma,T)$-\textit{weakly competitive}: $\sum^{t_2}_{t=t_1+1} d(\theta_t,\theta_{t-1}) \leq \gamma \:d_H(\st{P}(\cl{D}_{t_2}),\st{P}(\cl{D}_{t_1}))$ holds for any $t_2-t_1 \leq T$.
    \end{enumerate}
    We will say that $\sel$ is a \textit{consistent model chasing} (CMC) algorithm, if one of the above \textit{chasing} properties can be guaranteed for any pair of $\bm{\cl{D}}$ and corresponding $\bm{\theta}$. 
\end{defn}
The desired properties describe a natural notion of efficiency for this problem: The posited consistent parameter $\theta_t$ should only change online if new data is also informative. The competitiveness properties (C) and (D) naturally restricts changing $\theta_t$ when little new information is available and permits bigger changes in $\theta_t$ only when new data is informative.
Per time interval $\cl{I}=[t_1,t_2]$, the inequalities in (C) and (D) enforce the following: If the consistent sets $\st{P}(\cl{D}_{t_2})$ and $\st{P}(\cl{D}_{t_1})$ are the same, i.e.: ($d_H(\st{P}(\cl{D}_{t_2}),\st{P}(\cl{D}_{t_1}))=0$), the posited parameters $\theta_{t_1},\dots,\theta_{t_2}$ should all have the same value; On the other hand, if $\st{P}(\cl{D}_{t_2})$ is much smaller than $\st{P}(\cl{D}_{t_1})$, i.e.: ($d_H(\st{P}(\cl{D}_{t_2}),\st{P}(\cl{D}_{t_1}))$ large ) the total variation $\sum^{t_2}_{t=t_1+1} d(\theta_t,\theta_{t-1})$ in the posited parameters is allowed to be at most $\gamma d_H(\st{P}(\cl{D}_{t_2}),\st{P}(\cl{D}_{t_1}))$.

\noindent Properties (C) and (D) are stronger versions of (A) and (B), called \textit{competitiveness}/\textit{weak competitiveness}. 
The relationship between the chasing properties are summarized in the following corollary:
\begin{coro}\label{coro:sel}  
Then following implications hold between the properties of \defref{def:Selprops}:
    \begin{align*}
        \begin{array}{ccc}
            \mathrm{(C)} &\Rightarrow& \mathrm{(A)} \\
            \rotatebox[origin=c]{270}{$\Rightarrow$}& &\rotatebox[origin=c]{270}{$\Rightarrow$}\\
            \mathrm{(D)} &\Rightarrow& \mathrm{(B)} 
        \end{array}
    \end{align*}
    The reverse (and any other) implications between the properties do not hold in general.
\end{coro}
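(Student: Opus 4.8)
The plan is to verify each of the four arrows separately, relying on the telescoping structure of the sums in (C) and (D) and elementary properties of the Hausdorff distance; then to give (or at least indicate) counterexamples showing no reverse implication holds.

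First, for (C) $\Rightarrow$ (D): this is immediate, since the inequality in (C) is required to hold for \emph{all} pairs $t_1 < t_2$, which in particular includes all pairs with $t_2 - t_1 \le T$; so any $\gamma$-competitive $\sel$ is $(\gamma, T)$-weakly competitive for every $T$. Next, for (C) $\Rightarrow$ (A): since by assumption there is an $f \in \cl{F}$ consistent with the whole stream $\bm{\cl{D}}$, every consistent set $\st{P}(\cl{D}_t)$ is nonempty, and because the data sets are nested ($\cl{D}_{t-1} \subset \cl{D}_t$) the consistent sets are nested, $\st{P}(\cl{D}_{t+1}) \subseteq \st{P}(\cl{D}_t)$; they are closed subsets of the compact metric space $(\st{K},d)$, so the nested intersection $\st{P}_\infty := \bigcap_t \st{P}(\cl{D}_t)$ is nonempty, and one shows $d_H(\st{P}(\cl{D}_t), \st{P}_\infty) \to 0$ (a standard fact for a decreasing sequence of compact sets). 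Then for any $t_1 < t_2$, monotonicity of $d_H$ along the nested sequence gives $d_H(\st{P}(\cl{D}_{t_2}), \st{P}(\cl{D}_{t_1})) \le d_H(\st{P}_\infty, \st{P}(\cl{D}_{t_1})) \to 0$ as $t_1 \to \infty$, so by (C) the tail sums $\sum_{t=t_1+1}^{t_2} d(\theta_t, \theta_{t-1})$ are uniformly small, i.e. $\sum_{t \ge 1} d(\theta_t,\theta_{t-1}) < \infty$; hence $(\theta_t)$ is Cauchy in the complete space $\st{K}$ and converges, giving (A). The arrow (D) $\Rightarrow$ (B) is the ``local'' version of the same argument: fixing $T = 1$ (or any fixed $T$) in (D) gives $d(\theta_t,\theta_{t-1}) \le \gamma\, d_H(\st{P}(\cl{D}_t),\st{P}(\cl{D}_{t-1}))$, and since $\sum_t d_H(\st{P}(\cl{D}_t),\st{P}(\cl{D}_{t-1}))$ telescopes/converges as above, the individual terms $d_H(\st{P}(\cl{D}_t),\st{P}(\cl{D}_{t-1})) \to 0$, whence $d(\theta_t,\theta_{t-1}) \to 0$, which is (B). Finally, the implication (A) $\Rightarrow$ (B) is trivial (convergent sequences have $d(\theta_t,\theta_{t-1}) \to 0$), so the diagram commutes.

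For the non-reversibility claims: (B) $\not\Rightarrow$ (A) by the usual example of a sequence with vanishing consecutive gaps but no limit (e.g. partial sums of the harmonic series, suitably embedded in $\st{K}$, with consistent sets chosen constant so the competitiveness inequalities are vacuous on that side); (A) $\not\Rightarrow$ (C) and (B) $\not\Rightarrow$ (D) by exhibiting a stream on which the consistent sets $\st{P}(\cl{D}_t)$ are all equal (so the right-hand sides are $0$) while $\sel$ nonetheless moves $\theta_t$ a positive amount at some step — any selection that is not ``lazy'' violates competitiveness even though it may still converge; and (C) $\not\Rightarrow$ (A) fails to even make sense as a reverse arrow, so the relevant remaining claim is (D) $\not\Rightarrow$ (C), shown by an algorithm that is competitive only on short windows (e.g. it periodically ``resets'' and re-chases, incurring bounded extra variation each period) so that long-horizon sums can exceed $\gamma\, d_H$.

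The main obstacle I expect is not the forward implications — those are routine once the nestedness and compactness of the consistent sets are in hand — but rather being careful about \emph{which} properties of $\st{P}(\cl{D}_t)$ are actually guaranteed: specifically that $\cl{D}_{t-1} \subseteq \cl{D}_t$ forces $\st{P}(\cl{D}_t) \subseteq \st{P}(\cl{D}_{t-1})$ (clear from Definition \ref{def:Pt}, since more constraints shrink the feasible set, and closure preserves inclusion), and that a decreasing sequence of nonempty compact sets converges in Hausdorff distance to its intersection (which requires the intersection to be nonempty — guaranteed here by the standing assumption that some $f \in \cl{F}$ is consistent with the whole stream). The counterexamples require only a bit of bookkeeping to embed into a genuine compact parameter space $\st{K}$ with a genuine parametrization, but carry no conceptual difficulty.
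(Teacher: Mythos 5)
Your proof is essentially correct, and for the one nontrivial implication, $\mathrm{(D)}\Rightarrow\mathrm{(B)}$, you take a genuinely different route from the paper. The paper's argument is combinatorial: it fixes $\varepsilon>0$, observes that every step with $d(\theta_t,\theta_{t-1})\geq\varepsilon$ forces $d_H(\st{P}(\cl{D}_t),\st{P}(\cl{D}_{t-1}))\geq\varepsilon/\gamma$, and then invokes the packing lemma (\lemref{lem:compactpack}) to conclude that only $N(\st{K},\varepsilon/\gamma)$ such steps can occur. Your argument is topological: nested nonempty compact sets converge in Hausdorff distance to their intersection, and monotonicity then gives $d_H(\st{P}(\cl{D}_t),\st{P}(\cl{D}_{t-1}))\leq d_H(\st{P}(\cl{D}_{t-1}),\st{P}(\cl{D}_\infty))\to 0$, which with $(\gamma,1)$-weak competitiveness yields (B). Your route is arguably cleaner as a purely asymptotic statement; the paper's route has the advantage of being quantitative (it bounds the \emph{number} of large jumps by a packing number, a count that is reused in the proofs of the main mistake bounds). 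For $\mathrm{(C)}\Rightarrow\mathrm{(A)}$ the two arguments are morally identical — summable total variation implies Cauchy — though the paper gets there in one line by taking $t_1=0$ and bounding $d_H$ by $\dm(\st{K})$, whereas your detour through $d_H(\st{P}(\cl{D}_t),\st{P}_\infty)\to 0$ is unnecessary for this step.

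One intermediate claim in your $\mathrm{(D)}\Rightarrow\mathrm{(B)}$ argument is false as stated: the series $\sum_t d_H(\st{P}(\cl{D}_t),\st{P}(\cl{D}_{t-1}))$ does \emph{not} telescope and need not converge for a nested sequence of compact sets (the Hausdorff triangle inequality runs the wrong way; e.g., in $\ell_2$ take $\st{P}_t=\{0\}\cup\{e_n/n : n>t\}$, for which the consecutive distances are $1/t$). This does not break your proof, because the conclusion you actually need — that the \emph{individual} terms $d_H(\st{P}(\cl{D}_t),\st{P}(\cl{D}_{t-1}))$ tend to zero — follows directly from the Hausdorff convergence to $\st{P}_\infty$ that you already established; just delete the summability claim. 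Finally, note that the paper offers no proof at all of the non-reversibility assertions, so your counterexample sketches are a bonus; they are plausible but would need to be realized as genuine data streams in a concrete parametrization to be complete.
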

From the above diagram, we can see that $\gamma$-competitiveness is the strongest chasing property as it implies all other properties. In section \secref{sec:selonline}, we discuss that in cases where the consistent set map $\st{P}$ returns convex sets, $\gamma$-competitive CMC algorithms can be designed via a reduction to the nested convex bodies chasing (NCBC) problem \cite{bubeck2020chasing}. On the other hand, for $T=1$ and any $\gamma\geq 1$, the weaker chasing property (D) can always be achieved, even if the map $\st{P}$ returns arbitrary non-convex sets. We show in \secref{sec:generalcaseapisel} a simple projection-based selection rule, which satisfies $(\gamma,1)$-weak competitiveness.

\subsection{Main Results}
Assuming that $\pi$ and $\sel$ meet the required specifications, we can provide the overall guarantees for the algorithm. Let $(\mathbb{T},\st{K},d)$ be a compact parametrization of a given uncertainty set $\cl{F}$. Let $\pi$ be robust per Definition \ref{def:pi-rhorobust} and $\sel$ return consistent parameters per Definition \ref{def:Selprops}. We apply the online control strategy $\cl{A}_{\pi}(\sel)$ described in Algorithm \ref{alg:ApiselOCO} to system $x_{t+1}=f^*(t,x_t,u_t)$ with unknown dynamics $f^*\in\cl{F}$ and denote $(\bm{x},\bm{u})$ as the corresponding state and input trajectories.
\subsubsection{Finite mistake guarantees}
\begin{thm}\label{thm:red}
    Assume that $\sel$ is a CMC-algorithm with chasing property (A) and that $\pi$ is an oracle for an objective $\bm{\cl{G}}$. Then, the following mistake guarantees hold:
    \begin{enumerate}[label=(\roman*)]
     \vspace{-0.05in}
     \item \label{it:conv1} If $\pi$ is robust then $(\bm{x},\bm{u})$ always satisfies: $\sum^{\infty}_{t=0}{\cl{G}}_t(x_t,u_t) < \infty.$
     \item \label{it:conv4} If $\pi$ is uniformly $\rho$-robust and $\sel$ is $\gamma$-competitive, then $(\bm{x},\bm{u})$ obey the inequality:
     $$ \sum^{\infty}_{t=0}{\cl{G}}_t(x_t,u_t) \leq \tpiv{\rho}\Big(\tfrac{2\gamma}{\rho} \dm(\st{K}) +1\Big).$$
    \end{enumerate}
\end{thm}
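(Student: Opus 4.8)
The plan is to relate the online trajectory $(\bm x,\bm u)$, over suitable time windows, to the oracle's nominal trajectories $\cl S_{\cl I}[\rho;\theta]$ of \eqref{eq:Srhocond}, so that the robustness bounds of Table~\ref{tab:robustness} apply window by window. Two structural facts underlie everything. First, the consistent sets are nested and closed: $\cl D_0\subseteq\cl D_1\subseteq\cdots$ forces $\st P(\cl D_0)\supseteq\st P(\cl D_1)\supseteq\cdots$ with each $\st P(\cl D_t)$ closed (Definition~\ref{def:Pt}); and since $f^*\in\mathbb T[\theta^*]$ for a true parameter $\theta^*$ and the stream $\bm{\cl D}$ is generated by $f^*$, we have $\theta^*\in\st P(\cl D_t)$ for all $t$, so the consistent sets are nonempty. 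Second, if $\theta$ is consistent with the transitions recorded in $\cl D_t$ then some $f\in\mathbb T[\theta]$ reproduces the realized transitions $x_{s+1}=f^*(s,x_s,u_s)$ up to time $t$, and such an $f$ can serve as the ``feasible $f^*$'' in \eqref{eq:Srhocond} when we certify that a window of the online run is a nominal trajectory; the control laws $u_t=\pi[\theta_t](t,x_t)$ already match the form required there, and one only needs to keep track of the one-step offset between when a transition happens and when $\sel$ has it in its data set.

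For part~\ref{it:conv1}, chasing property (A) gives $\bar\theta:=\lim_t\theta_t\in\st K$. Since $\theta_s\in\st P(\cl D_s)\subseteq\st P(\cl D_t)$ for every $s\ge t$ and $\st P(\cl D_t)$ is closed, $\bar\theta\in\st P(\cl D_t)$ for all $t$, so one $\bar f\in\mathbb T[\bar\theta]$ reproduces the realized transitions for all time. Fix $\rho>0$ with $\pi$ $\rho$-robust and choose $T_0$ with $d(\theta_t,\bar\theta)\le\rho$ for all $t\ge T_0$. Then for every $N>T_0$ the window $(x_{[T_0,N]},u_{[T_0,N]})$ satisfies \eqref{eq:Srhocond} with $\theta^*=\bar\theta$, feasible model $\bar f$, and realized sequence $(\theta_{T_0},\dots,\theta_N)$, so it lies in $\cl S_{[T_0,N]}[\rho;\bar\theta]$. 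By $\rho$-robustness $\sum_{t=T_0}^N\cl G_t(x_t,u_t)\le\sup_{\gamma\ge0}m^\pi_\rho(\gamma;\bar\theta)<\infty$ uniformly in $N$, and adding the at most $T_0$ mistakes on $[0,T_0)$ gives $\sum_{t\ge0}\cl G_t(x_t,u_t)<\infty$.

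For part~\ref{it:conv4}, I would cut $[0,\infty)$ into consecutive epochs of small parameter drift. With $V_t:=\sum_{s=1}^t d(\theta_s,\theta_{s-1})$, set $a_1=0$; given $a_j$, let $e_j$ be the largest index with $V_{e_j}-V_{a_j}\le\rho/2$ and put $a_{j+1}=e_j+1$ (if $V_\infty-V_{a_j}\le\rho/2$ the epoch $[a_j,\infty)$ is the last one, handled as in part~\ref{it:conv1} with center $\bar\theta$). On a finite epoch $\cl I_j=[a_j,e_j]$ every posited $\theta_t$, $t\in\cl I_j$, lies within $\rho/2\le\rho$ of $\theta_{e_j}$, and $\theta_{e_j}=\sel[\cl D_{e_j}]$ is consistent with the transitions recorded in $\cl D_{e_j}$, which include those needed to generate $(x_{\cl I_j},u_{\cl I_j})$; hence $(x_{\cl I_j},u_{\cl I_j})\in\cl S_{\cl I_j}[\rho;\theta_{e_j}]$ and, by \emph{uniform} $\rho$-robustness, $\sum_{t\in\cl I_j}\cl G_t(x_t,u_t)\le\tpiv{\rho}$. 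To count epochs, apply $\gamma$-competitiveness of $\sel$ with $t_1=0$: for every $t_2$, $\sum_{t=1}^{t_2}d(\theta_t,\theta_{t-1})\le\gamma\,d_H(\st P(\cl D_{t_2}),\st P(\cl D_0))\le\gamma\,\dm(\st K)$ (using $\emptyset\ne\st P(\cl D_{t_2})\subseteq\st P(\cl D_0)\subseteq\st K$), so $V_\infty\le\gamma\,\dm(\st K)$. Every non-final epoch consumes more than $\rho/2$ of $V_\infty$, since by maximality of $e_j$ we have $V_{a_{j+1}}-V_{a_j}=V_{e_j+1}-V_{a_j}>\rho/2$; telescoping, $(k-1)\tfrac{\rho}{2}<V_{a_k}\le\gamma\,\dm(\st K)$, so the number of epochs satisfies $k\le\tfrac{2\gamma}{\rho}\dm(\st K)+1$. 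Summing the per-epoch bound gives $\sum_{t\ge0}\cl G_t(x_t,u_t)\le k\,\tpiv{\rho}\le\tpiv{\rho}\big(\tfrac{2\gamma}{\rho}\dm(\st K)+1\big)$.

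The main obstacle is exactly this epoch bookkeeping: the reference parameter of an epoch must \emph{simultaneously} be consistent with that epoch's realized transitions (which forces a parameter posited no earlier than the end of the epoch) and lie within the robustness margin $\rho$ of \emph{every} posited parameter inside the epoch (which forbids looking too far ahead); taking the last posited parameter of the epoch and bounding drift by the epoch's accumulated variation reconciles these, but one must also handle degenerate one-step epochs, which arise whenever a single jump $d(\theta_t,\theta_{t-1})>\rho/2$ occurs. Such an epoch contributes at most one mistake, which is $\le\tpiv{\rho}$ in the generic regime $\tpiv{\rho}\ge1$ and can be absorbed into an adjacent two-step window when $d(\theta_t,\theta_{t-1})\le\rho$ (the remaining case $\tpiv{\rho}=0$ is degenerate and treated directly). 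The remaining ingredients — nestedness and closedness of $\st P(\cdot)$, the estimate $d_H\le\dm(\st K)$, and the telescoping count — are routine.
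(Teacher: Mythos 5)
Your proof is correct and follows essentially the same route as the paper's: part (i) via the limit parameter $\theta_\infty\in\st P(\cl D_\infty)$ and the tail being a nominal trajectory, and part (ii) by partitioning time into windows of parameter drift at most $\rho/2$, certifying each window as an element of $\cl S_{\cl I}[\rho;\theta_{\mathrm{end}}]$ using the end-of-window posited parameter, and counting windows via $\gamma$-competitiveness against $\st P(\cl D_0)=\st K$. The only difference is cosmetic bookkeeping (you cut epochs by accumulated variation exceeding $\rho/2$, the paper by distance from the epoch's start exceeding $\rho/2$), yielding the same count $\tfrac{2\gamma}{\rho}\dm(\st K)+1$ and the same bound.
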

\noindent The strong competitiveness property is not necessary if instead stronger conditions on the oracle can be enforced: \thmref{thm:redweak} states that if the oracle $\pi$ is cost-invariant we can weaken the assumptions on $\sel$ and still provide finite mistake guarantees. 
\begin{thm}\label{thm:redweak}
    Assume that $\sel$ is a CMC-algorithm with chasing property (B) and that $\pi$ is an uniformly $\rho$-robust, {cost-invariant} oracle for an objective $\bm{\cl{G}}$. Then the following mistake guarantees holds:
    \begin{enumerate}[label=(\roman*)]
     \vspace{-0.05in}
     \item \label{it:conv2} $(\bm{x},\bm{u})$ always satisfies: $\sum^{\infty}_{t=0}{\cl{G}}_t(x_t,u_t) < \infty.$
     \item \label{it:conv3} If $\sel$ is $(\gamma,T)$-weakly competitive, then $(\bm{x},\bm{u})$ obey the following inequality, with $N$ denoting the packing number of $\st{K}$ (see definition below):
        $$\sum^{\infty}_{t=0} \cl{G}_t(x_t,u_t) \leq M^\pi_\rho\left(N(\st{K},r^*)+1\right),\: \quad r^*:=\frac{1}{2}\frac{\rho}{\gamma} \frac{T}{M^\pi_\rho+T}$$
    \end{enumerate}
\end{thm}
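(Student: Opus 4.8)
The plan is to cut $[0,\infty)$ into ``epochs'' on which the online trajectory of $\cl{A}_\pi(\sel)$ (Algorithm~\ref{alg:ApiselOCO}) coincides with a \emph{nominal} trajectory of the idealized system \eqref{eq:Srhocond}, so each epoch incurs at most $M^\pi_\rho$ mistakes, and then to use $(\gamma,T)$-weak competitiveness together with cost-invariance to show that only $N(\st{K},r^*)+1$ epochs actually incur mistakes. Throughout write $V(s,t):=\sum_{\tau=s+1}^{t}d(\theta_\tau,\theta_{\tau-1})$ for the variation of the posited parameters; since the online stream is consistent with $f^*\in\cl{F}$, each $\st{P}(\cl{D}_t)\neq\emptyset$ and $\theta_t=\sel[\cl{D}_t]\in\st{P}(\cl{D}_t)$ is well defined, and the sets $\st{P}(\cl{D}_t)$ are nested decreasing (extra transitions only add constraints).

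\emph{Step 1 (nominal-trajectory lemma).} I would first show: if an interval $\cl{I}=[a,b]$ satisfies $V(a,b+1)\le\rho$, then $(x_\cl{I},u_\cl{I})\in\cl{S}_\cl{I}[\rho;\theta_{b+1}]$, hence $\sum_{t=a}^{b}\cl{G}_t(x_t,u_t)\le M^\pi_\rho$. Indeed, by \defref{def:Pt} there is $f\in\mathbb{T}[\theta_{b+1}]$ reproducing every transition $(t,x_{t+1},x_t,u_t)\in\cl{D}_{b+1}$, in particular for all $t\in\cl{I}$; moreover $u_t=\pi[\theta_t](t,x_t)$ by construction and $d(\theta_t,\theta_{b+1})\le V(t,b+1)\le V(a,b+1)\le\rho$, so the online segment is a feasible trajectory of \eqref{eq:Srhocond} with $\theta^*=\theta_{b+1}$; uniform $\rho$-robustness bounds its mistakes by $M^\pi_\rho$.

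\emph{Step 2 (cost-invariance).} Since $u_t=\pi[\theta_t](t,x_t)$, the first bullet of \defref{def:pi-invariant} gives $x_t\in\st{X}_t\Rightarrow\cl{G}_t(x_t,u_t)=0$ (and $x_t\notin\st{X}_t\Rightarrow\cl{G}_t(x_t,u_t)=1$ trivially), so a mistake at $t$ is exactly ``$x_t\notin\st{X}_t$''. Using the \emph{same} consistent $f\in\mathbb{T}[\theta_{b+1}]$ and the second bullet of \defref{def:pi-invariant} (with $\theta=\theta_{b+1}$, $\theta'=\theta_t$) I would then get, for $\cl{I}=[a,b]$ as in Step~1: (a) if $x_a\in\st{X}_a$ then $x_t\in\st{X}_t$ for all $t\in\cl{I}$, so $\cl{I}$ is mistake-free; (b) the number of steps before first reaching $\st{X}$ equals the number of mistakes there, hence is $\le M^\pi_\rho$, so if $b-a\ge M^\pi_\rho$ then $x_b\in\st{X}_b$; (c) (carry-over) if $x_b\in\st{X}_b$ and $d(\theta_b,\theta_{b+1})\le\rho$, then $x_{b+1}\in\st{X}_{b+1}$.

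\emph{Step 3 (epochs and counting).} Decompose $[0,\infty)$ greedily into maximal epochs $\cl{E}_k=[a_k,b_k]$ with $V(a_k,b_k+1)\le\rho$ and $a_{k+1}=b_k+1$ (the last may be $[a_K,\infty)$). Each finite epoch incurs $\le M^\pi_\rho$ mistakes (Step~1), and $d(\theta_{b_k},\theta_{b_k+1})\le V(a_k,b_k+1)\le\rho$, so carry-over (c) holds at \emph{every} epoch boundary; with (a)-(b) this shows that once the state enters $\st{X}_t$ at the start of some epoch, every later epoch starts in $\st{X}_t$ and is mistake-free. Call an epoch \emph{short} if $b_k-a_k<M^\pi_\rho$; a long ($b_k-a_k\ge M^\pi_\rho$) or the final epoch reaches $\st{X}$ by (b) (for the infinite case, apply (b) to its prefix $[a_K,a_K+M^\pi_\rho]$, using $V(a_K,\cdot)\le\rho$), after which nothing more happens. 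Hence all mistakes lie in the short epochs plus at most one long/final epoch: at most $N(\st{K},r^*)+1$ epochs provided there are $\le N(\st{K},r^*)$ short epochs. To count short epochs, maximality gives $V(a_k,b_k+2)>\rho$, a variation exceeding $\rho$ over $\le M^\pi_\rho+1$ consecutive steps; pigeonholing into windows of length $\le T$, one such window carries variation $>\rho\,T/(M^\pi_\rho+1+T)\ge\gamma r^*$, so by $(\gamma,T)$-weak competitiveness (\defref{def:Selprops}) the consistent set shrinks there by more than $r^*$ in Hausdorff distance. Since the $\st{P}(\cl{D}_t)$ are nested decreasing, the points witnessing these shrinkages across distinct short epochs form an $r^*$-separated subset of $\st{K}$, so there are $\le N(\st{K},r^*)$ of them, which is part~(ii). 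For part~(i) the same skeleton applies using only property~(B): since $d(\theta_t,\theta_{t-1})\to 0$, past some $T_0$ no window of $\le M^\pi_\rho+1$ steps has variation $>\rho$, so only finitely many short epochs exist, hence finitely many mistakes.

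\emph{Main obstacle.} The substance is the bookkeeping of Step~3. One must (i) cut epochs by the variation budget $\rho$ rather than a fixed length, precisely so that carry-over (c) is automatic at every boundary; (ii) make the pigeonhole constant in the short-epoch analysis land on $r^*=\tfrac12\tfrac{\rho}{\gamma}\tfrac{T}{M^\pi_\rho+T}$ — the factor $\tfrac12$ supplies the slack needed both to cover $\le M^\pi_\rho+1$ steps by length-$\le T$ windows and to keep the witnessing windows of distinct short epochs from interfering in the packing argument; and (iii) argue that all long/final epochs collectively cost only one extra $M^\pi_\rho$, because the cost-invariant oracle traps the state inside $\st{X}_t$ from the first such epoch onward. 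Steps~1--2 are conceptually the crux but technically routine; orchestrating the epochs so that $M^\pi_\rho$ per epoch and $N(\st{K},r^*)+1$ epochs multiply out to exactly the stated bound is the delicate part.
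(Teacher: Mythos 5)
Your overall strategy is the same as the paper's: partition time into intervals on which the online segment is a nominal trajectory of \eqref{eq:Srhocond} for a single consistent anchor parameter (so uniform $\rho$-robustness gives $\le M^\pi_\rho$ mistakes per interval), use cost-invariance to show that admissibility, once attained, propagates within intervals and across boundaries (so only intervals before ``absorption'' into $\st{X}_t$ cost anything), and bound the number of costly intervals by an $r^*$-packing argument driven by $(\gamma,T)$-weak competitiveness and the nestedness of the $\st{P}(\cl{D}_t)$. Your Steps 1 and 2 are correct and in fact slightly cleaner than the paper's (anchoring at $\theta_{b+1}\in\st{P}(\cl{D}_{b+1})$ directly, rather than the paper's two-step bound $d(\theta_t,a_k)\le\rho/2$, $d(a_k,b_k)\le\rho/2$), and your part~(i) argument is a valid variant of the paper's.

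The one place where your bookkeeping does not close as stated is the packing count in Step~3, and it stems from your choice of epoch boundary. You cut epochs by the variation budget $V(a_k,b_k+1)\le\rho$, so maximality only gives $V(a_k,b_k+2)>\rho$, and the excess variation may sit entirely in the single step from $b_k+1=a_{k+1}$ to $b_k+2$, i.e.\ \emph{inside the next epoch}. Your witnessing window for epoch $k$ can then be $[a_{k+1},a_{k+1}+1]$ while epoch $k+1$'s witnessing window also starts at $a_{k+1}$; in that configuration both witness points lie in the same set $\st{P}(\cl{D}_{a_{k+1}})$ and the separation $d(p_k,p_{k+1})>r^*$ needed for Lemma~\ref{lem:compactpack} is not guaranteed. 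The factor $\tfrac12$ in $r^*$ does not by itself repair this. The paper avoids the issue by defining intervals via the distance-from-start criterion $d(\theta_t,\theta_{t_k})>\tfrac12\rho$: then $d(\theta_{t_k},\theta_{t_{k+1}})>\tfrac12\rho$ is a variation carried entirely by the summands indexed in $[t_k,t_{k+1}]$, so every witnessing window for interval $k$ ends at or before $t_{k+1}$, the start of the next interval's windows, and the nested-separation argument goes through cleanly. (The paper also counts only intervals whose successor starts with a mistake, rather than all short epochs, but both yield the same $M^\pi_\rho(N(\st{K},r^*)+1)$ bound.) If you switch to that interval construction, or otherwise force your witness windows to end at $\le a_{k+1}$, your proof is complete.
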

\begin{defn}[\citep{dudleypacking87}]\label{def:entropy}
    Let $(\cl{M}, d)$ be a metric space and $\st{S} \subset \cl{M}$ a compact set. For $r>0$, define $N(\st{S},r)$ as the $r$-\textit{packing number} of $\st{S}$:
    \begin{align*}
        N(\st{S},r):=\max\left\{n\in\mathbb{N}\left|\begin{array}{l} \exists \theta_1,\dots,\theta_N \in \st{S} \text{ s.t. }d(\theta_i,\theta_j)>r \text{ for all } 1\leq i, j \leq n,\; i \neq j \end{array}\right.\right\}.
    \end{align*}
\end{defn}

\textbf{Basic asymptotic mistake guarantees.} The guarantees stated in part $(i)$ of the above theorems are asymptotic ones, since they are equivalent to stating $\lim_{t \rightarrow \infty} \cl{G}_t(x_t,u_t)=0$. To provide this guarantee, we only impose weak asymptotic conditions on $\pi$ and $\sel$: $\sel$ needs to either satisfy the convergence condition (A) or (B), while $\pi$ has to be either robust or uniformly robust+cost-invariant for some non-zero margin, respectively. The appeal of the guarantee $(i)$ is that we do not require knowledge of any specific constants (such as $\rho$ or $\gamma$) to give this basic asymptotic guarantee. 

\textbf{Guarantees with explicit mistake bounds for convex and non-convex $\st{P}(\cl{D})$.} On the other hand, the guarantees stated in part $(ii)$ provide additional explicit mistakes bounds if stronger chasing-properties can be established for the CMC algorithm $\sel$. Notice that the mistake bound provided in \thmref{thm:red} provides an exponential improvement over the bound provided in \thmref{thm:redweak}. As an example, assume $\st{K}$ were the unit cube $[0,1]^n$ in $\mathbb{R}^n$ and take $d(x,y):=\|x-y\|_{\infty}$, then the corresponding packing number is at least $N(\st{K},r^*) \geq (M^\pi_\rho\tfrac{\gamma}{\rho}\dm(\st{K}))^n$. The stronger result \thmref{thm:red} requires $\sel$ to be a $\gamma$-competitive CMC-algorithm. We show in \secref{sec:selonline} that if the consistent map $\st{P}$ returns always convex sets, we can design $\gamma$-competitive CMC-algorithms through a reduction to the nested convex body chasing problem. However, it is unclear whether $\gamma$-competitiveness can be achieved for the case of non-convex consistent sets. The weaker mistake bound presented in \thmref{thm:redweak} on the other hand only requires the weak competitiveness property (D) which can always be achieved with $T=1$ ( see \secref{sec:generalcaseapisel}). Therefore, in contrast to \thmref{thm:red}, the result \thmref{thm:redweak} is directly applicable to general maps $\st{P}$. 

\textbf{A theoretical tool for performance analysis.} Theorem \ref{thm:red} and \ref{thm:redweak} can be invoked on any learning and control method that instantiates $\cl{A}_{\pi}(\sel)$. It offers a set of sufficient conditions to verify whether a learning agent $\cl{A}_{\pi}(\sel)$ can provide mistake guarantees: We need to show that w.r.t. some compact parametrization $\parttupl$ of the uncertainty set $\cl{F}$, $\pi$ operates as a robust oracle for some objective $\bm{\cl{G}}$, and that $\sel$ satisfies strong enough chasing properties. 

\textbf{A design guideline. } Theorem \ref{thm:red} also suggests a design philosophy of decoupling the learning and control problem into two separate problems while retaining the appropriate guarantees: (1) design a robust oracle $\pi$ for a specified control goal $\bm{\cl{G}}$; and (2) design an online selection procedure {\sel} that satisfies the chasing properties defined in \defref{def:Selprops}. 

\textbf{Extending the domain of robust control through augmentation.}
Robust control methods which only apply for small uncertainty settings, can be naturally extended to the large uncertainty setting. Provided that the design method can be embedded as an oracle sub-routine $\pi_{rc}$ and that we can find a suitable $\sel$ routine, the meta-algorithm $\cl{A}_{\pi_{rc}}(\sel)$ provides a simple extension of the original method to the large uncertainty setting: $\cl{A}_{\pi_{rc}}(\sel)$ carries any guarantees of the original control method (assuming they can be paraphrased as mistake guarantees) to the large uncertainty setting.

\subsubsection{A worst-case bound on the state norm}
Regardless of the objective $\bm{\cl{G}}$, we can provide worst-case state norm guarantees for $\cl{A}_\pi(\sel)$ in a normed state space $(\cl{X},\|\anon\|)$, if $\sel$ is a competitive or a weakly competitive CMC algorithm and $\pi$ provides sufficient robustness guarantees for a single time-step transition:
\begin{thm}\label{thm:transient}
    Assume that $\pi:\st{K}\mapsto \cl{C}$ is $(\alpha,\beta)$-single step robust in the space $(\cl{X},\|\anon\|)$. Then, the following state bound guarantees hold: 
    \begin{enumerate}[label=(\roman*)]
        \item If $\sel$ is a $\gamma$-competitive CMC algorithm, then:
         \begin{align*}
            \forall t:\quad \|x_t\| \leq e^{\alpha \gamma \phi(\st{K})}\left(e^{-t} \|x_0\| + \beta \frac{e}{e -1}\right).
        \end{align*}
        \item If $\sel$ is a $(\gamma,T)$-weakly competitive CMC algorithm, then:
        \begin{align*}
            \|\bm{x}\|_{\infty} \leq \inf\limits_{0<\mu<1} \left(1+(\alpha\phi(K))^{n^*}\right)\max\{\tfrac{\beta}{1-\mu}, \|x_0\|\} + \beta\sum\limits^{n^*}_{k=0} (\alpha\phi(K))^k
        \end{align*}
        where $n^*= N(\st{K},\tfrac{\mu}{\alpha\gamma})$ and $\phi(\st{K})$ denotes the diameter of $\st{K}$. 
    \end{enumerate}
\end{thm}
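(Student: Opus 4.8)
The plan is to track, over the online run of $\cl{A}_\pi(\sel)$, how the posited parameter sequence $\bm{\theta}=(\theta_0,\theta_1,\dots)$ moves and to exploit that each single online transition $x_{t+1}=f^*(t,x_t,u_t)$ with $u_t=\pi[\theta_t](t,x_t)$ is itself a $2$-step nominal trajectory in $\cl{S}_{[t,t+1]}[\rho;\theta]$ for the choice $\rho=d(\theta_t,\theta_{t+1})$ (since $f^*\in\mathbb{T}[\theta^*]$ for a consistent $\theta^*$, and both $\theta_t$ and the "measurement" $\theta_t$ used to compute $u_t$ sit within $d(\theta_t,\theta_{t+1})$ of $\theta_{t+1}$ — I will need to be a little careful about which pair of parameters plays the role of $\theta$ and $\theta^*$ in Definition \ref{def:smooth}, using the explicit form $\|f(t,x,\pi[\theta'](t,x))\|\le \alpha d(\theta,\theta')\|x\|+\beta$ with $\theta'=\theta_t$ and $\theta$ a consistent parameter). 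The upshot is a per-step bound of the form $\|x_{t+1}\|\le \alpha\, \delta_t\, \|x_t\| + \beta$ where $\delta_t$ is (up to a constant) the local movement $d(\theta_t,\theta_{t+1})$ of the selection, and crucially $\sum_t \delta_t$ is controlled by the competitiveness of $\sel$: by property (C), $\sum_{t=t_1+1}^{t_2} d(\theta_t,\theta_{t-1})\le \gamma\, d_H(\st{P}(\cl{D}_{t_2}),\st{P}(\cl{D}_{t_1})) \le \gamma\,\phi(\st{K})$, since all consistent sets lie in $\st{K}$ and the nested consistent sets never grow.

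For part (i), having $\sum_t \delta_t \le \gamma\phi(\st{K})$, I unroll the recursion $\|x_{t+1}\|\le \alpha\delta_t\|x_t\|+\beta$. Writing $a_t:=\alpha\delta_t$, the solution is $\|x_t\|\le \big(\prod_{s<t} a_s\big)\|x_0\| + \beta\sum_{k<t}\prod_{s=k+1}^{t-1} a_s$. Each product of the $a_s$ is bounded by $\exp(\sum_s a_s)\le \exp(\alpha\gamma\phi(\st{K}))$. That already gives $\|x_t\|\le e^{\alpha\gamma\phi(\st{K})}(\|x_0\|+\beta t)$, which is not quite the stated bound — the stated bound has the geometric factor $e^{-t}$ and the constant $\beta e/(e-1)$. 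To get that sharper form I will split each step into a "contraction" part and a "perturbation" part: observe that for the worst case it suffices to consider $\delta_t$ small for all but finitely many $t$, and more cleanly, bound $a_s \le e^{-1}\cdot(e\,a_s)$ and use $\prod e\,a_s \le e^{\sum a_s} \le e^{\alpha\gamma\phi(\st{K})}$ while the extracted $e^{-1}$ per step furnishes the $e^{-t}$ decay and turns $\beta\sum_{k<t} e^{-(t-1-k)}$ into $\beta\frac{e}{e-1}$ via the geometric series. This is the one place where I expect to have to be careful with the bookkeeping so that the factors land exactly as in the statement.

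For part (ii), with only $(\gamma,T)$-weak competitiveness the sum $\sum_t d(\theta_t,\theta_{t-1})$ need not be finite, so the argument must instead be combinatorial: partition the time axis into maximal blocks on which the total parameter movement stays below $\tfrac{\mu}{\alpha\gamma}$ (equivalently, use weak competitiveness on windows where $d_H$ of the consistent sets is small), and use a packing argument — there can be at most $n^*=N(\st{K},\tfrac{\mu}{\alpha\gamma})$ blocks where a genuinely new region of $\st{K}$ is excluded, since each such block corresponds to a drop in the consistent set that the $r$-packing number can absorb only $n^*$ many times (this mirrors the mistake-counting in Theorem \ref{thm:redweak}). Within a single block the recursion has total multiplicative gain at most $\alpha\cdot\tfrac{\mu}{\alpha\gamma}\cdot\gamma=\mu<1$ per... more precisely the block's overall contraction factor is bounded using $\prod a_s\le \exp(\alpha\gamma\cdot(\text{movement}))$ but here I instead want the product bounded by $(\alpha\phi(\st{K}))^{(\text{number of "active" steps})}$; chaining $n^*$ such blocks, together with the fixed-point $\beta/(1-\mu)$ of the contraction $y\mapsto \mu y+\beta$, yields $\|\bm x\|_\infty\le (1+(\alpha\phi(\st{K}))^{n^*})\max\{\tfrac{\beta}{1-\mu},\|x_0\|\}+\beta\sum_{k=0}^{n^*}(\alpha\phi(\st{K}))^k$, and taking the infimum over $\mu\in(0,1)$ gives the claimed bound.

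The main obstacle is part (ii): correctly organizing the block decomposition so that (a) weak competitiveness genuinely applies on each block (blocks of length $\le T$, or chained sub-blocks with controlled $d_H$), (b) the number of "information-reducing" blocks is bounded by the packing number $n^*$ rather than by time, and (c) the per-block growth/contraction estimates compose into exactly the stated closed form. Part (i) is comparatively routine once the single-step $(\alpha,\beta)$-robustness is correctly matched to the definition and the telescoping sum of $d(\theta_t,\theta_{t-1})$ is bounded by $\gamma\phi(\st{K})$; the only subtlety there is extracting the $e^{-t}$ factor cleanly.
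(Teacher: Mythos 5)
Part (i) of your proposal is correct and is essentially the paper's argument: the per-step bound $\|x_{t+1}\|\le \alpha\, d(\theta_{t+1},\theta_t)\|x_t\|+\beta$, the telescoping bound $\sum_t d(\theta_{t+1},\theta_t)\le\gamma\, d_H(\st{P}(\cl{D}_0),\st{P}(\cl{D}_\infty))\le\gamma\phi(\st{K})$ from $\gamma$-competitiveness, and the product estimate $\prod_j\delta_j\le e^{-t}e^{\sum_j\delta_j}$ (the paper's Lemma~\ref{lem:aux1}, via $\delta\le e^{\delta-1}$, which is the same trick as your $e\delta\le e^{\delta}$) followed by the geometric series. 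The one point to nail down is exactly the one you flagged: in Definition~\ref{def:smooth} the nominal consistent parameter must be taken to be $\theta_{t+1}\in\st{P}(\cl{D}_{t+1})$ (which is consistent with the new transition $(t+1,x_{t+1},x_t,u_t)$), so that the relevant distance is precisely the selection's movement $d(\theta_{t+1},\theta_t)$; with a fixed true parameter $\theta^*$ in that slot the distances $d(\theta^*,\theta_t)$ are not summable.

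Part (ii) as organized has a genuine gap. Blocks on which the \emph{total} parameter movement stays below $\mu/(\alpha\gamma)$ only control the multiplicative product $\prod_k \alpha\, d(\theta_{k+1},\theta_k)$ over the block; they give no per-step contraction, so the additive contributions $\beta\sum_j\prod_{s>j}\alpha\, d(\theta_{s+1},\theta_s)$ inside a single block grow linearly with the block length, which is unbounded, and no uniform $\|\bm{x}\|_\infty$ bound follows. The paper's proof instead classifies \emph{individual steps} by the state inequality itself: let $\mathbb{I}$ be the set of $k$ with $\|x_{k+1}\|>\mu\|x_k\|+\beta$. For $k\notin\mathbb{I}$ one has the genuine contraction $\|x_{k+1}\|\le\mu\|x_k\|+\beta$, whose iteration stays below $\max\{\|x_k\|,\beta/(1-\mu)\}$ with no accumulation. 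For $k\in\mathbb{I}$ the single-step robustness forces $\alpha\, d(\theta_{k+1},\theta_k)>\mu$, hence $d(\theta_{k+1},\theta_k)>\mu/\alpha$, and $(\gamma,T)$-weak competitiveness applied to the one-step window ($1\le T$) gives $d_H(\st{P}(\cl{D}_k),\st{P}(\cl{D}_{k+1}))>\mu/(\alpha\gamma)$; by nestedness the sets $\{\st{P}(\cl{D}_k):k\in\mathbb{I}\}$ are pairwise $\mu/(\alpha\gamma)$-separated, so Lemma~\ref{lem:compactpack} caps $|\mathbb{I}|\le N(\st{K},\mu/(\alpha\gamma))=n^*$, while each such step multiplies the state by at most $\alpha\phi(\st{K})$ and adds $\beta$. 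Interleaving at most $n^*$ such expansions with contracting runs yields the stated closed form. You already have the right ingredients (the packing count $n^*$, the fixed point $\beta/(1-\mu)$, the growth factor $\alpha\phi(\st{K})$); the unit of analysis must be the single step selected by the state inequality, not a block selected by accumulated parameter movement.
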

\noindent Notice that the above worst-case guarantee holds for \textit{arbitrarily large} diameter $\phi(\st{K})$ of the parameter space $\st{K}$ and applies naturally to scenarios where initially stabilizing control policies do not exist. To this end, it is also important to verify that the required property defined in \defref{def:smooth} does \textit{not} imply existence of an initially stabilizing control policy.
\subsubsection{Characterizing performance in terms of \#mistakes vs. information gain}

As shown in the appendix \secref{sec:app_results}, the mistake bounds presented in \thmref{thm:red}\ref{it:conv4-app} and \thmref{thm:redweak}\ref{it:conv3-app} are actually simplifications of the stricter mistake bound inequalities:
\begin{align*}
    &\textit{If Thm.\ref{thm:red}\ref{it:conv4-app}, then:}  &\forall \ol{\tau}>\tau:\:\:\sum^{\ol{\tau}}_{t=\tau}{\cl{G}}_t(x_t,u_t) &\leq M^\pi_\rho\Big(2\tfrac{\gamma}{\rho}d_H(\st{P}(\cl{D}_{\tau}),\st{P}(\cl{D}_{\ol{\tau}}))+1\Big)\\
    &\textit{If Thm.\ref{thm:redweak}\ref{it:conv3-app}, then:}  &\forall \ol{\tau}>\tau:\:\:\sum^{\ol{\tau}}_{t=\tau} \cl{G}_t(x_t,u_t) &\leq M^\pi_\rho\Big(N(\st{P}(\cl{D}_{\tau})\setminus \mathrm{int}(\st{P}(\cl{D}_{\ol{\tau}})),r^*)+1\Big)
\end{align*}
The above bounds provide more insight into the performance of an $\cl{A}_\pi(\sel)$ algorithm, by showing that the mistake guarantees scale in an efficient way with the system uncertainty: The total mistakes occurring in any time-interval $[\tau,\ol{\tau}]$ increase with the total information gained during the same interval! The amount of information gained in the time-frame $[\tau,\ol{\tau}]$ is measured by the terms $d_H(\st{P}(\cl{D}_{\tau}),\st{P}(\cl{D}_{\ol{\tau}})$ and $N(\st{P}(\cl{D}_{\tau})\setminus \mathrm{int}(\st{P}(\cl{D}_{\ol{\tau}})),r^*)$, which quantify (via Hausdorff-distance and packing numbers) how much parametric uncertainty has been reduced during $[\tau,\ol{\tau}]$, by comparing the consistent sets $\st{P}(\cl{D}_{\tau})$, $\st{P}(\cl{D}_{\ol{\tau}})$ at the start and the beginning of the time window.
Thus, the bounds allow for a simple interpretation: The algorithm $\cl{A}_\pi(\sel)$ only makes informative mistakes.
\subsubsection{Mistake guarantees with locally robust oracles}
The worst-case bound shown in \thmref{thm:transient} can be directly used to extend the result of \thmref{thm:red} to problem settings where we only have access to locally robust oracles.
\begin{thm}[Corollary of Thm. \ref{thm:red}]\label{thm:redext}
Consider the setting and assumptions of Thm.\ref{thm:red} and Thm.\ref{thm:redweak}, but relax the oracle robustness requirements to corresponding local versions and enforce the additional oracle assumption stated in Thm.\ref{thm:transient}.
\noindent Then all guarantees of Thm.\ref{thm:red} \ref{it:conv1},\ref{it:conv4} and Thm.\ref{thm:redweak} \ref{it:conv2},\ref{it:conv3} still hold, if we replace $M^\pi_\rho$ in Thm.\ref{thm:red} \ref{it:conv4} and Thm.\ref{thm:redweak} \ref{it:conv3} respectively by $M^\pi_\rho(\gamma_\infty)$ and $M^\pi_\rho(\gamma^w_\infty)$ with the constants:
\begin{align*}
    \gamma_\infty&=e^{\alpha \gamma \phi (\st{K})}\left(\|x_0\| + \beta \frac{e}{e -1}\right)\\
    \gamma^w_\infty &= \inf\limits_{0<\mu<1} \left(1+(\alpha\phi(K))^{n^*}\right)\max\{\tfrac{\beta}{1-\mu}, \|x_0\|\} + \beta\sum\limits^{n^*}_{k=0} (\alpha\phi(K))^k 
\end{align*}
where $n^*= N(\st{K},\tfrac{\mu}{\alpha\gamma})$ and $\phi(\st{K})$ denotes the diameter of $\st{K}$. 
\end{thm}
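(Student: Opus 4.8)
The whole proof reduces \thmref{thm:redext} to \thmref{thm:red}, \thmref{thm:redweak} and \thmref{thm:transient} via one observation: in the proofs of \thmref{thm:red} and \thmref{thm:redweak} the uniform mistake constant $M^\pi_\rho=\sup_{\gamma\ge0,\,\theta\in\st{K}}m^\pi_\rho(\gamma;\theta)$ is used in exactly one way — as an upper bound on the number of mistakes incurred during a single ``consistency phase''. On such a phase (a maximal time interval $\cl{I}$ on which the competitiveness of $\sel$ forces the posited parameters to stay within $\rho$ of a fixed $\theta$) the run $(x_{\cl{I}},u_{\cl{I}})$ belongs to $\cl{S}_{\cl{I}}[\rho;\theta]$ and is therefore a nominal oracle trajectory started from the state $x$ at which the phase begins — this is the mechanism already used in those proofs — so it contributes at most $m^\pi_\rho(\|x\|;\theta)$ mistakes, which is then dominated by $M^\pi_\rho$. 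Under the merely \emph{local} robustness assumed here this last domination is unavailable, but $m^\pi_\rho(\gamma;\theta)$ is finite for each fixed $\gamma$; hence the plan is to produce an a priori, time-uniform bound on $\|x_t\|$ along the run and replace $m^\pi_\rho(\|x\|;\theta)\le M^\pi_\rho$ by $m^\pi_\rho(\gamma_\infty;\theta)\le M^\pi_\rho(\gamma_\infty)$ (resp. with $\gamma^w_\infty$) everywhere it occurs.

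Such a state bound is exactly what the extra hypothesis — $\pi$ being $(\alpha,\beta)$-single step robust — provides through \thmref{thm:transient}, and since \thmref{thm:transient} invokes neither the objective $\bm{\cl{G}}$ nor the robustness of the oracle, the bound is genuinely a priori and can be fed into the mistake count without circularity. Concretely, I would first, if $\sel$ is $\gamma$-competitive, invoke \thmref{thm:transient} to get $\|x_t\|\le e^{\alpha\gamma\phi(\st{K})}\big(e^{-t}\|x_0\|+\beta\tfrac{e}{e-1}\big)$ for all $t$, whose supremum over $t\ge0$ (attained at $t=0$) is precisely the constant $\gamma_\infty$ in the statement; if $\sel$ is only $(\gamma,T)$-weakly competitive, \thmref{thm:transient} directly yields $\|\bm{x}\|_\infty\le\gamma^w_\infty$ with $n^*=N(\st{K},\tfrac{\mu}{\alpha\gamma})$ as stated. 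I would then replay the proof of part~(ii) of \thmref{thm:red} (resp. part~(ii) of \thmref{thm:redweak}) verbatim, the only change being that the per-phase estimate $m^\pi_\rho(\,\cdot\,;\theta)\le M^\pi_\rho$ is replaced by $m^\pi_\rho(\gamma_\infty;\theta)\le M^\pi_\rho(\gamma_\infty)$ (resp. $M^\pi_\rho(\gamma^w_\infty)$), finite by local uniform robustness; since the number of phases is controlled only by $\gamma$, $\rho$ and the diameter/packing numbers of $\st{K}$, which are untouched, the explicit bounds of \thmref{thm:red} and \thmref{thm:redweak} carry over with $M^\pi_\rho$ replaced by $M^\pi_\rho(\gamma_\infty)$ resp. $M^\pi_\rho(\gamma^w_\infty)$.

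For the asymptotic parts (part~(i) of each theorem) the same substitution suffices: on the finite initial transient before the posited parameters settle, the single-step recursion $\|x_{t+1}\|\le\alpha\,\phi(\st{K})\|x_t\|+\beta$ (the expanded form of \defref{def:smooth} with $d(\theta,\theta')\le\phi(\st{K})$) keeps $\|x_t\|$ finite, so only finitely many mistakes occur there, and once the posited parameters have converged to within the margin $\rho$ (property~(A)) or stopped moving appreciably (property~(B)) the run is a nominal oracle trajectory from a finite state and incurs at most $m^\pi_\rho(\gamma;\theta)<\infty$ further mistakes — which is precisely the structure of the original proofs of part~(i), now with ``$M^\pi_\rho<\infty$'' downgraded to ``$m^\pi_\rho(\gamma;\theta)<\infty$ at the finite states that actually arise''. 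I expect the main (and really only) obstacle to be a careful bookkeeping pass over the proofs of \thmref{thm:red} and \thmref{thm:redweak} to confirm that $M^\pi_\rho$ is nowhere used except as an upper bound for $\sup_\theta m^\pi_\rho(\|x\|;\theta)$ at states $x$ that occur along the run, never for arbitrarily large states; once that is verified the substitution is immediate and requires no new estimates.
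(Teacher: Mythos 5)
Your proposal is correct and follows essentially the same route as the paper: invoke Theorem \ref{thm:transient} to get the a priori state bound $\gamma_\infty$ (resp.\ $\gamma^w_\infty$), then replay the proofs of Theorems \ref{thm:red} and \ref{thm:redweak} with the per-interval estimate $M^\pi_\rho$ replaced by $M^\pi_\rho(\gamma_\infty)$ via the locally uniform version of the corollary to Definition \ref{app-def:pi-rhorobust}. The only small imprecision is your claim that the number of phases is ``untouched'': in the weakly competitive case the packing radius $r^*$ itself contains $M^\pi_\rho$, so the substitution must also be made there (as the theorem statement prescribes), which enlarges the packing number but does not otherwise alter the argument.
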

\subsubsection{Oracle policies with memory.}
\label{sec:result_ext} 
The previous results assume that $\pi$ returns static policies of the type $(t,x) \mapsto u$. However, this assumption is only made for ease of exposition. All previous results also hold in the case where $\pi$ returns policies which have an internal state, as long as we can define the internal state to be shared among all oracle policies, i.e.: as part of the oracle implementation online, we update the state $z_t$ at each step $t$ according to some fixed update rule $h$
$$z_t = h(t,z_{t-1},x_{t},u_t, \dots, x_0,u_0),$$
and control policies $\pi[\theta]$, $\theta\in\st{K}$ are maps $(t,x,z) \mapsto u$ which we evaluate at time $t$ as $u_t = \pi[\theta](t,x_t,z_t)$.\\

\noindent We discuss in section \ref{sec:oracleasrc} how robust oracle design is a pure robust control problem and briefly overview the main available methods. Designing procedures $\sel$ with the properties stated in \defref{def:Selprops} poses a new class of online learning problems. We propose in \secref{sec:selonline} a reduction to the well-known nested convex body chasing problem, which enables design and analysis of competitive CMC procedures in the scenario where the consistent sets $\st{P}(\cl{D}_t)$ are always convex. For the non-convex scenario, we highlight a general projection based selection which can ensure weak-competitiveness.

\section{Robust Control Oracle}\label{sec:oracleasrc}
Designing robust oracles $\pi$ introduced in \defref{def:pi-rhorobust} can be mapped to well-studied problems in robust control theory. We use a simplified problem setting to explain this correspondence.

Consider a class of system of the form $x_{t+1} = g(x_t,u_t;\theta^*) + w_t, \|w_t\| \leq 1$, where $\theta^*$ is an unknown system parameter which lies in a known compact set $\st{K}\subset \R^m$. We represent the uncertainty set as $\cl{F} = \cup_{\theta \in \st{K}} \mathbb{T}[\theta]$ with $\mathbb{T}[\theta]:= \{f^*: t,x,u \mapsto g(x,u;\theta)+w_t\:|\: \|\bm{w}\|_{\infty} \leq 1 \}$.
\begin{figure}[t]
        \centering
        \ifarxiv
        \includegraphics[width = 0.8\textwidth]{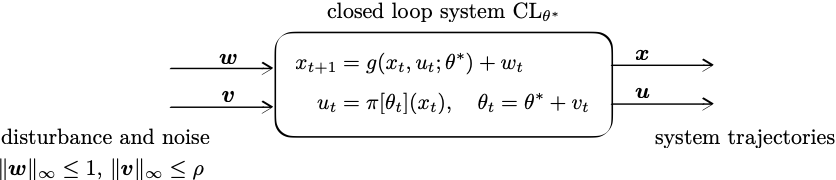}
        \fi
        \ifaistats
         \includegraphics[width = 0.45\textwidth]{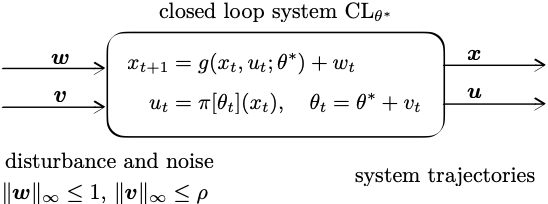}
        \fi
        \caption{closed loop $\mathrm{CL}_{\theta^*}$: An idealized setting in which we have noisy measurements of the true $\theta^*$.}
        \label{fig:block}
\end{figure}
Let $\pi:\st{K} \mapsto \cl{C}$ be a procedure which returns state feedback policies $\pi[\theta]:\cl{X}\mapsto\cl{U}$ for a given $\theta\in\st{K}$.
Designing an uniformly $\rho$-robust oracle $\pi$ can be equivalently viewed as making the closed loop system (described by \eqref{eq:Srhocond}) of the idealized  setting robust to disturbance and noise. For the considered example, the closed loop is depicted in \figref{fig:block} and is represented by $\mathrm{CL}_\theta^*$ which maps system perturbations ($\bm{w}$, $\bm{v}$) to corresponding system trajectories $\bm{x}$, $\bm{u}$. We call $\pi$   a uniformly $\rho$-robust oracle if the cost-performance (measured as $\sum^{\infty}_{t=0} \cl{G}_t (x_t,u_t)$) of the closed loop $\mathrm{CL}_\theta^*$ is robust to disturbances of size $1$ and noise of size $\rho$ for any $\theta^*\in\st{K}$.
For any noise $\|\bm{v}\|_{\infty} \leq \rho$ and disturbance $\|\bm{w}\|_{\infty} \leq 1$, the performance cost has to be bounded as:
$$\sum^{\infty}_{t=0} \cl{G}_t(x_t,u_t) \leq M^\pi_\rho\ \ \text{ or }\ \ \sum^{\infty}_{t=0} \cl{G}_t(x_t,u_t) \leq M^\pi_\rho(\|x_0\|),$$ 
for some fixed constant $M^\pi_\rho$ or fixed function $M^\pi_\rho(\anon)$, in case we can only establish local properties. Now, if we identify the cost functions $\cl{G}_t$ with their level sets $\st{S}_t := \{ (x,u)\:|\: \cl{G}_t(x,u) =0\}$, we can phrase the former equivalently as a form of robust trajectory tracking problem or a set-point control problem\footnote{in that case $\cl{G}_t$ would be not time dependent} \citep{khalil2002nonlinear}. It is common in control theory to provide guarantees in the form of convergence rates (finite-time or exponential convergence) on the tracking-error; these guarantees can be directly mapped to $M^\pi_\rho$ and $M^\pi_\rho(\anon)$, as shown in the next example:
\begin{ex}
    Assume we want to track a desired trajectory $\bm{x}^d$ within $\ep$ accuracy in some normed state space $(\cl{X},\|\anon\|)$. We can phrase this as a control objective $\bm{\cl{G}}$ by defining $\cl{G}_t(x,u) := 0, \text{ if } \|x^d_t-x\|\leq \ep$ and $\cl{G}_t(x,u) := 1, \text{ otherwise }$. Providing an exponential convergence guarantee of the type $\|x^d_t-x_t\|\leq c \mu^t$ with constants $c$, $\mu<1$ is a basic problem studied in control theory. It is easy to see that such a guarantee implies $\sum^{\infty}_{t=0}\cl{G}_t(x_t,u_t) \leq \tfrac{\log(c\|x_0\|)+\log(\ep^{-1})}{\log(\mu^{-1})}$. Hence, if design method $\pi$ provides a policy $\pi[\theta]$ which guarantees (for any $\|\bm{w}\|_\infty\leq 1$, $\|\bm{v}\|_\infty\leq\rho$) for any trajectory of the closed-loop $\mathrm{CL}_{\theta}$ the convergence condition $\forall t:\|x^d_t-x_t\|\leq c \mu^t$, then $\pi$ is a locally $\rho$-uniformly robust oracle with the mistake function $$ M^\pi_\rho = \tfrac{\log(c\|x_0\|)+\log(\ep^{-1})}{\log(\mu^{-1})}.$$
\end{ex}

\noindent \textbf{Available methods for robust oracle design.}
Many control methods exist for robust oracle design. Which method to use depends on the control objective $\bm{\cl{G}}$, the specific application, and the system class (linear/nonlinear/hybrid, etc.). For a broad survey, see \cite{zhou1996robust,spongrobustlinear87,Spongrobust92} and references therein.
We  characterize two general methodologies (which can also be combined):
\begin{itemize}[nosep]
    \item \textit{Robust stability analysis focus:} 
    In an initial step, we use analytical design principles from robust nonlinear and linear control design to propose an oracle $\pi[\theta](x)$ in closed-form for all $\theta$ and $x$. In a second step we prove robustness using analysis tools such as for example \textit{Input-to-State Stability} (ISS) stability analysis \cite{ISSSontag} or robust set invariance methods \cite{rakovic2006simple,rakovic2010parameterized}). 
    \item \textit{Robust control synthesis:} If the problem permits, we can also directly address the control design problem from a computational point of view, by formulating the design problem as an optimization problem and compute for a control law with desired guarantees directly. This can happen partially online, partially offline. Some common nonlinear approaches are robust (tube-based) MPC \cite{mayne2011tube, borrelli2017predictive}, SOS-methods \cite{parrilo2000structured},\cite{aylward2008stability}, Hamilton-Jacobi reachability methods \cite{bansal2017hamilton}.
\end{itemize}
There are different advantages and disadvantages to both approaches and it is important to point out that robust control problems are not always tractably solvable. See \cite{blondel2000survey, Braatzdoylecomplexity} for simple examples of robust control problems which are NP-hard. The computational complexity of robust controller synthesis tends to increase (or even be potentially infeasible) with the complexity of the system of interest; it also further increases as we try optimize for larger robustness margins $\rho$.

\textbf{The dual purpose of the oracle. }
In our framework, access to a robust oracle is a necessary prerequisite to design learning and control agents $\cl{A}_\pi(\sel)$ with mistake guarantees.
However this is a mild assumption and is often more enabling than restrictive.
 First, it represents a natural way to ensure well-posedness of the overall learning and control problem; If robust oracles cannot be found for an objective, then the overall problem is likely intrinsically hard or ill-posed (for example necessary fundamental system properties like stabilizability/ detectability are not satisfied).

 Second, the oracle abstraction enables a modular approach to robust learning and control problems, and directly leverages existing powerful methods in robust control:  
 Any model-based design procedure $\pi$ which works well for the small uncertainty setting (i.e.: acts as a robust oracle) can be augmented with an online chasing algorithm $\sel$ (with required chasing properties) to provide robust control performance (in the form of mistake guarantees) in the large uncertainty setting via the augmented algorithm $\cl{A}_\pi(\sel)$.


\section{Consistent models chasing}
\label{sec:selonline}
The ``chasing properties'' defined in \defref{def:Selprops} express different solution approaches to the same underlying online learning problem, which we refer broadly as a \textit{consistent models chasing} (CMC) problem. We discuss how this problem is linked to the online learning literature and discuss a reduction to the well-known nested convex body chasing (NCBC) problem. 


\subsection{Chasing consistent models competitively} 

Assume a given parameterization $(\mathbb{T},\st{K},d)$ and some fixed data set $\cl{D}_T = (d_1,\dots, d_T)$ which is presented to us in an online fashion and which has at least one consistent parameter $\theta^*$ (i.e: $\st{P}(\cl{D}_T)$ is non-empty).

Our goal is to find a consistent parameter $\theta^*\in\st{P}(\cl{D}_T)$ online, or equivalently, to find model $\mathbb{T}[\theta^*]$ consistent with all data $\cl{D}_T$. However, since we do not have access to all data upfront, the best we can do is posit at each time-step $t$ a parameter $\theta_t\in\st{{P}(\cl{D}_t)}$ consistent at least with all so far seen data and make the hypothesis that $\theta_t$ is also consistent with $\cl{D}_T$ until proven otherwise by new data. Our goal is to posit the parameters $\theta_t$ in an efficient way: we would like to change our hypothesis about the consistent parameter $\theta^*$ as little as possible and thus $\theta_t$ should change over time as little as possible. We call this problem \textit{consistent models chasing} and we view the chasing properties defined in \defref{def:Selprops} as different ways of expressing this goal. Evaluating the performance of the online selection via competitiveness leads to a problem closely related to online learning, which we term \textit{competitive} CMC:

\begin{defn}[weakly and strongly competitive consistent model chasing]\label{def:CCMC}
Given online data of the form $\cl{D}_t=(d_0,\dots,d_t), \cl{D}_t \in \mathbb{D}$, select at each time-step a consistent parameter $\theta_t \in \st{P}(\cl{D}_t)$ such that for any time-interval $[t_1,t_2]$ the following inequality holds:
\begin{align}\label{eq:comp}
\sum^{t_2}_{t=t_1+1} d(\theta_t,\theta_{t-1}) \leq \gamma d_H(\st{P}(\cl{D}_{t_2}),\st{P}(\cl{D}_{t_1}))
\end{align}
An algorithm $\sel:\mathbb{D}\mapsto \st{K}$ which fulfills this requirement for a fixed $\gamma$, is called a $\gamma$-competitive or -strongly competitive consistent model chasing CMC-algorithm. The constant $\gamma$ is called the competitive ratio. If the above requirement \eqref{eq:comp} is only fulfilled for time-intervals $[t_1,t_2]$ of length $T$, i.e.: $t_2-t_1\leq T$, we call $\sel$ a $(\gamma,T)$-weakly competitive CMC algorithm. 
\end{defn}
 \noindent \textbf{An online learning problem. } We quantify performance using competitiveness, which is a common performance objective in the design of online learning algorithms \cite{koutsoupias2000beyond, borodin2005online,chen2018smoothed, goel2019online,shi2020online,yu2020power}. 
 In these problem settings an online learning algorithm is benchmarked against the performance of an in-hindsight optimal offline algorithm. We can equivalently view the term $d_{H}(\st{P}(\cl{D}_{t_2}),\st{P}(\cl{D}_{t_1}))$ as such a benchmark: 
It represents the smallest bound we could provide on the variation $\sum^{t_2}_{t=t_1+1} d(\theta_{t},\theta_{t-1})$ with an offline algorithm, assuming we knew the data set $(d_{t_1},\dots, d_{t_2})$ in advance and were given some arbitrary start parameter $\theta_{t_1}\in\st{P}(\cl{D}_{t_1})$ (see Remark \ref{rem:equiv} below).
\begin{rem}\label{rem:equiv}
    If we knew the data $(d_{t_1},\dots, d_{t_2})$ in advance, the best possible selection is $\theta_{t_1+1}=\dots=\theta_{t_2}=\theta^*$ where $\theta^*\in\argmin_{\theta'\in\st{P}(\cl{D}_{t_2})}d(\theta_{t_1},\theta');$ If we are given an arbitrarily chosen $\theta_{t_1}\in\st{P}(\cl{D}_{t_1})$, the best possible bound we can provide is a worst-case one, i.e.: $\max_{\theta'\in\st{P}(\cl{D}_{t_1})} d(\theta',\st{P}(\cl{D}_{t_2}))=d_{H}(\st{P}(\cl{D}_{t_2}),\st{P}(\cl{D}_{t_1}))$ 
\end{rem}

\subsection{Reducing competitive CMC to competitive nested convex body chasing}
The main difficulty in selecting the parameters $\theta_t$ to solve CMC competitively is that, for any time $t<T$, we cannot guarantee to select a parameter $\theta_t$ which is guaranteed to lie in the future consistent set $\st{P}(\cl{D}_T)$. 
However, a sequence of consistent sets is always nested, i.e. $\st{K}\supset \st{P}(\cl{D}_1)\dots\supset \st{P}(\cl{D}_T)$. This inspires a competitive procedure for the CMC problem, through a reduction to a known online learning problem.  

If we can assume the consistent sets $\st{P}(\cl{D})$ to be always convex, we can reduce the CMC problem to a well-known problem of \textit{nested convex body chasing} (NCBC) \cite{bubeck2020chasing}. This requirement is necessary for the reduction and is stated in Assumption \ref{asp:convexuncertain}. 
We discuss in \secref{sec:psetonline} scenarios for which $\st{P}(\cl{D})$ can always be represented as polyhedrons and apply this to a general class of robotic manipulation problems in section \ref{sec:instantiation}.
\begin{asp}\label{asp:convexuncertain}
    Given a compact parametrization $\parttupl$ of the uncertainty set $\cl{F}$, the consistent sets $\st{P}(\cl{D})$ are always convex for any data set $\cl{D}\in\mathbb{D}$.
\end{asp} 

\noindent \textbf{Nested convex body chasing (NCBC). } In NCBC, we have access to online nested sequence $\st{S}_0,\st{S}_1,\dots,\st{S}_T$ of convex sets in some metric space $(\cl{M},d)$ (i.e.: $\st{S}_t \subset \st{S}_{t-1}$). The learner selects at each time $t$ a point $p_t$ from  $\st{S}_t$. The goal in competitive NCBC is to produce $p_1,\dots,p_T$ online such that the total moving cost $\sum^{T}_{j=1} d(p_{j},p_{j-1})$ at time $T$ is competitive with the offline-optimum, i.e. there is some $\gamma>0$ s.t. $\sum^{T}_{j=1} d(p_{j},p_{j-1})\leq \gamma \max_{p_0\in\st{S}_0}\mathrm{OPT}_T(p_0)$, where $\mathrm{OPT}_T(p_0) :=\min_{p \in \st{S}_T} d(p,p_0)$. 
\begin{rem}
    NCBC is a special case of the more general convex body chasing (CBC) problem, first introduced by \cite{friedman1993convex}, which studied competitive algorithms for metrical goal systems. 
 \end{rem}
%
Let the sequence of convex consistent sets $\st{P}(\cl{D}_t)$ be the corresponding $\st{S}_t$ of the NCBC problem, any $\gamma$-competitive agent $\cl{A}$ for the NCBC problem can instantiate a $\gamma$-competitive selection for competitive model-chasing, as summarized in the following reduction:

\begin{prop}\label{prop:reduction}
Consider the setting of \aspref{asp:convexuncertain}. Then any $\gamma$-competitive algorithm for NCBC 
in metric space $(\st{K},d)$ instantiates via Algorithm \ref{alg:ncbcsel} a $\gamma$-competitive CMC algorithm $\sel_{\ncbc}$ in the parametrization $\parttupl$.
\end{prop}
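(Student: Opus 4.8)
I would instantiate the reduction by letting the online consistent sets play the role of the nested bodies: given the stream $\bm{\cl{D}}=(\cl{D}_1,\cl{D}_2,\dots)$, set $\st{S}_t := \st{P}(\cl{D}_t)$ and feed $\st{S}_0=\st{K}\supseteq\st{S}_1\supseteq\st{S}_2\supseteq\cdots$ to the given $\gamma$-competitive NCBC agent $\cl{A}$, outputting at each step the point $\theta_t:=p_t$ that $\cl{A}$ selects (this is what Algorithm \ref{alg:ncbcsel} does). The first task is to check this is a legitimate NCBC instance in $(\st{K},d)$: each $\st{S}_t$ is convex by \aspref{asp:convexuncertain}, closed by \defref{def:Pt} and hence compact since $\st{K}$ is, and nonempty because by hypothesis there is always an $f\in\cl{F}$ consistent with $\bm{\cl{D}}$, hence a consistent parameter. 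Nestedness, $\st{P}(\cl{D}_t)\subseteq\st{P}(\cl{D}_{t-1})$, is immediate from \defref{def:Pt} since appending data points to $\cl{D}_{t-1}$ only adds constraints $x^+ = f(t,x,u)$. Since $\cl{A}$ always picks $p_t\in\st{S}_t=\st{P}(\cl{D}_t)$, the map $\sel_\ncbc$ is by construction a selection of $\st{P}$, which is the basic requirement in \defref{def:CCMC}/\defref{def:Selprops}.

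\textbf{The competitive inequality.} It then remains to verify \eqref{eq:comp} for every window $[t_1,t_2]$. Fix $t_1<t_2$. The key step is to recognize that, thanks to nestedness, the offline NCBC benchmark over this window is exactly the Hausdorff distance appearing in \eqref{eq:comp}: because $\st{P}(\cl{D}_{t_2})\subseteq\st{P}(\cl{D}_{t_1})$ we have $\max_{y\in\st{P}(\cl{D}_{t_2})}d(y,\st{P}(\cl{D}_{t_1}))=0$, so
\[
 d_H\big(\st{P}(\cl{D}_{t_2}),\st{P}(\cl{D}_{t_1})\big)=\max_{p_0\in\st{P}(\cl{D}_{t_1})} d\big(p_0,\st{P}(\cl{D}_{t_2})\big)=\max_{p_0\in\st{S}_{t_1}}\min_{p\in\st{S}_{t_2}}d(p_0,p),
\]
which is precisely $\max_{p_0\in\st{S}_{t_1}}\opt_{[t_1,t_2]}(p_0)$, the worst-case offline optimum for the subsequence $\st{S}_{t_1}\supseteq\cdots\supseteq\st{S}_{t_2}$ (this is the content of Remark \ref{rem:equiv}). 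Applying the $\gamma$-competitiveness of $\cl{A}$ on this window gives $\sum_{t=t_1+1}^{t_2}d(\theta_t,\theta_{t-1})=\sum_{t=t_1+1}^{t_2}d(p_t,p_{t-1})\le \gamma\max_{p_0\in\st{S}_{t_1}}\opt_{[t_1,t_2]}(p_0)=\gamma\,d_H(\st{P}(\cl{D}_{t_2}),\st{P}(\cl{D}_{t_1}))$, i.e.\ \eqref{eq:comp}. Since $t_1<t_2$ were arbitrary, $\sel_\ncbc$ is a $\gamma$-competitive CMC algorithm.

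\textbf{The main obstacle.} The one place needing care is the last application of $\gamma$-competitiveness: the definition in the excerpt phrases competitiveness over the horizon $[0,T]$ with start set $\st{S}_0$, whereas I need the guarantee on an arbitrary interval $[t_1,t_2]$ with the start point $p_{t_1}$ already fixed by the run so far. I would handle this by observing that the tail of a nested sequence is again a nested sequence: from time $t_1$ onward $\cl{A}$ faces the NCBC instance $\st{S}_{t_1}\supseteq\st{S}_{t_1+1}\supseteq\cdots$ from the current position $p_{t_1}\in\st{S}_{t_1}$, so its incremental movement cost is bounded by $\gamma$ times the offline optimum started at $p_{t_1}$, namely $\gamma\min_{p\in\st{S}_{t_2}}d(p_{t_1},p)$; since $p_{t_1}\in\st{S}_{t_1}$ this is in turn at most $\gamma\max_{p_0\in\st{S}_{t_1}}\min_{p\in\st{S}_{t_2}}d(p_0,p)$, the worst-case-initialization bound used above. (Equivalently, one works with NCBC algorithms whose anytime/sub-interval competitive guarantee holds by design, which covers the standard algorithms cited.) Everything else is bookkeeping with \defref{def:Pt} and the definition of Hausdorff distance.
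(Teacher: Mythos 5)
Your proposal is correct and takes essentially the same route as the paper: both identify the nested consistent sets $\st{P}(\cl{D}_t)$ with the NCBC bodies, use nestedness to equate $d_H(\st{P}(\cl{D}_{t_2}),\st{P}(\cl{D}_{t_1}))$ with the worst-case offline optimum $\max_{p_0\in\st{S}_{t_1}}\mathrm{OPT}(p_0)$, and handle arbitrary intervals $[t_1,t_2]$ via the shifted/tail nested sequence. Your explicit bound $\min_{p\in\st{S}_{t_2}}d(p_{t_1},p)\le\max_{p_0\in\st{S}_{t_1}}\min_{p\in\st{S}_{t_2}}d(p_0,p)$ just makes precise the step the paper states more tersely.
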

\begin{proof}
Since we set $\st{S}_t=\st{P}(\cl{D}_t)$, the terms $d_H(\st{P}(\cl{D}_{T}),\st{P}(\cl{D}_{0}))=\max_{p_0\in\st{S}_0} \text{OPT}_T(p_0)$ are equal. Hence $\gamma$-competitive NCBC implies $\gamma$-competitive CMC over all time-intervals $[t_0,T]$, with $t_0=1$. To see that this also holds for any choice of $t_0$, recall that the NCBC problem requires the competitiveness condition to hold for any sequence of nested convex bodies. Thus, for a fix sequence $\st{S}_t$ the condition has to be satisfied also for the shifted sequences $\st{S}'_{t} := \st{S}_{t+k}$.
\end{proof}

\begin{algorithm}[t]
    \caption{ $\gamma$-competitive CMC selection $\sel_{\ncbc}$}
    \label{alg:ncbcsel}
    \begin{small}
 \begin{algorithmic}[1]
    \Require{$\gamma$-competitive NCBC algorithm $\cl{A}_{\ncbc}$, consistent set procedure $\st{P}$}

    \Procedure{$\sel_{\ncbc}$}{$t,x^+,x,u$}
    \State $\cl{D}_t \leftarrow \cl{D}_{t-1} \cup (t,x^+,x,u)$
    \State $\st{S}_t \leftarrow \st{P}(\cl{D}_t)$ \Comment{construct/update new consistent set}
    \State present set $\st{S}_t$ to $\cl{A}_{\ncbc}$
    \State $\cl{A}_{\ncbc}$ chooses $\theta_t \in \st{S}_t$
    \State \textbf{return} $\theta_{t}$ 
    \EndProcedure
 \end{algorithmic}
 \end{small}
 \end{algorithm}


\textbf{Simple competitive NCBC-algorithms in euclidean space $\R^n$.}
When $(\st{K},d)$ is a compact euclidean finite dimensional space,
 recent exciting progress on the NCBC problem provides a variety of competitive algorithms \cite{argue2019nearly,argue2020chasing,bubeck2020chasing,sellke2020chasing} 
 that can instantiate competitive selections per Algorithm \ref{alg:ncbcsel}.

We highlight two simple instantiations based on results in \cite{argue2019nearly}, and \cite{bubeck2020chasing}. Both algorithms can be tractably implemented in the setting of Assumption \ref{asp:convexuncertain}. The selection criteria for $\sel_p(\cl{D}_t)$ and $\sel_s(\cl{D}_t)$ is defined as:
\begin{subequations}
    \label{eq:sel}
\begin{align}
    \label{eq:sela}\texttt{SEL}_p(\cl{D}_t) &:= \argmin\limits_{\theta \in \st{P}(\cl{D}_t)} \|\theta-\texttt{SEL}_p(\cl{D}_{t-1})\|,\\ 
    \label{eq:selb} \texttt{SEL}_s(\cl{D}_t)  &:= s(\st{P}(\cl{D}_t)),
\end{align}
\end{subequations}
where $\texttt{SEL}_p$ defines simply a greedy projection operator and where $\texttt{SEL}_s$ selects according to the \textit{Steiner-Point} $s(\st{P}(\cl{D}_t))$ of the consistent set $\st{P}(\cl{D}_t)$ at time $t$. 
\begin{defn}[Steiner Point]
    For a convex body $\st{K}$, the Steiner point is defined as the following integral over the $n-1$ dimensional sphere $\Sn^{n-1}$:
    \begin{equation}\label{eq:st}
        s(K) = n \int_{v \in \Sn^{n-1}} \max_{x \in \st{K}} \langle v,x \rangle v dv.
    \end{equation}
\end{defn}
\begin{rem}
    As shown in \cite{argue2020chasing}, the Steiner point can be approximated efficiently by solving randomized linear programs. We take this approach for our later empirical validation in section \ref{sec:empirical_validation}. 
\end{rem}
\noindent The competitive analysis presented in \cite{bubeck2020chasing} applies and appealing to Proposition \ref{prop:reduction} we can establish that $\texttt{SEL}_p$ and $\texttt{SEL}_s$ are competitive CMC algorithms:
\begin{coro}[of Theorem 1.3 \cite{argue2019nearly}, and Theorem 2.1 \cite{bubeck2020chasing}]\label{coro:pathlengthchasing}
Assume $\st{K}$ is a compact convex set in $\R^n$ and  $d(x,y):=\|x-y\|_2$. Then, the procedures $\texttt{SEL}_p$ and $\texttt{SEL}_s$ are competitive (CMC)-algorithms with competitive ratio $\gamma_p$ and $\gamma_s$: 
   \begin{align}
       &\gamma_p = (n-1)n^{\frac{n+1}{2}}, & \gamma_s = \frac{n}{2}.
   \end{align}
\end{coro}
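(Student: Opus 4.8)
The plan is to read Corollary~\ref{coro:pathlengthchasing} off directly from the reduction in Proposition~\ref{prop:reduction}, after observing that $\texttt{SEL}_p$ and $\texttt{SEL}_s$ are nothing but Algorithm~\ref{alg:ncbcsel} run with two specific, already-analyzed subroutines for nested convex body chasing. Concretely, comparing the selection rules \eqref{eq:sela}--\eqref{eq:selb} with Algorithm~\ref{alg:ncbcsel} (where $\st{S}_t=\st{P}(\cl{D}_t)$): if the internal NCBC algorithm $\cl{A}_{\ncbc}$ is the greedy ``move to the nearest point of the new body'' rule, Algorithm~\ref{alg:ncbcsel} outputs exactly $\theta_t=\argmin_{\theta\in\st{P}(\cl{D}_t)}\|\theta-\theta_{t-1}\|_2$, i.e.\ $\texttt{SEL}_p$; if $\cl{A}_{\ncbc}$ is the Steiner-point rule $\st{S}_t\mapsto s(\st{S}_t)$ of \eqref{eq:st}, Algorithm~\ref{alg:ncbcsel} outputs $\theta_t=s(\st{P}(\cl{D}_t))$, i.e.\ $\texttt{SEL}_s$. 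So it suffices to (i) verify that the hypotheses of Proposition~\ref{prop:reduction} are in force here, and (ii) quote the competitive ratios of these two NCBC subroutines on $(\st{K},\|\cdot\|_2)$.

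For (i), I would note that we work, as throughout this subsection, under Assumption~\ref{asp:convexuncertain} (it is implicit already in \eqref{eq:selb}, since $s(\st{P}(\cl{D}_t))$ must be the Steiner point of a convex body), so every consistent set $\st{P}(\cl{D}_t)$ is convex; by Definition~\ref{def:Pt} it is closed and it is contained in the compact $\st{K}$, hence a compact convex body, which makes the greedy projection well defined (and unique, by strict convexity of the Euclidean norm over a convex set) and the Steiner integral \eqref{eq:st} well defined. Moreover, along any online stream $\bm{\cl{D}}=(\cl{D}_1,\cl{D}_2,\dots)$ each new data point only adds equality constraints in Definition~\ref{def:Pt}, so $\st{K}\supseteq\st{P}(\cl{D}_1)\supseteq\st{P}(\cl{D}_2)\supseteq\cdots$ is a nested sequence of convex bodies, i.e.\ a valid NCBC instance in $(\st{K},d)$. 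Hence Proposition~\ref{prop:reduction} applies and any $\gamma$-competitive NCBC algorithm instantiates a $\gamma$-competitive CMC algorithm through Algorithm~\ref{alg:ncbcsel}.

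For (ii), I would invoke Theorem~1.3 of \cite{argue2019nearly}, which shows greedy nearest-point projection is $(n-1)n^{(n+1)/2}$-competitive for nested convex body chasing in $\R^n$ with the Euclidean metric, giving $\gamma_p=(n-1)n^{(n+1)/2}$, and Theorem~2.1 of \cite{bubeck2020chasing}, which shows the Steiner-point algorithm is $\tfrac{n}{2}$-competitive in the same setting, giving $\gamma_s=\tfrac{n}{2}$; combining with (i) yields the claim. I expect no real obstacle: the mathematical content lives entirely in those two cited NCBC theorems, so the argument is essentially bookkeeping. The only points needing care are conventions — confirming the cited competitive ratios are stated for the \emph{nested} variant (not general CBC) and for the $\ell_2$ metric exactly as in Corollary~\ref{coro:pathlengthchasing}, and that the ``point selection'' oracle assumed in those papers coincides with the update rules $\texttt{SEL}_p$, $\texttt{SEL}_s$ of \eqref{eq:sel}. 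Once these identifications are pinned down, Proposition~\ref{prop:reduction} does the rest.
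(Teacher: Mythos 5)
Your proposal matches the paper's own argument exactly: the corollary is obtained by recognizing $\texttt{SEL}_p$ and $\texttt{SEL}_s$ as instantiations of Algorithm~\ref{alg:ncbcsel} with the greedy-projection and Steiner-point NCBC subroutines, checking that Assumption~\ref{asp:convexuncertain} and nestedness of the consistent sets put us in the setting of Proposition~\ref{prop:reduction}, and then quoting the competitive ratios from Theorem~1.3 of \cite{argue2019nearly} and Theorem~2.1 of \cite{bubeck2020chasing}. Your closing caution about conventions is the one point worth keeping: the paper's own Lemma~\ref{lem:steiner} bounds the Steiner-point path length by $n\,d_H(\st{K}_1,\st{K}_T)$ against the Hausdorff benchmark and by $\tfrac{n}{2}\dm(\st{K}_1)$ against the diameter, so the constant $\gamma_s=\tfrac{n}{2}$ should be traced to the mean-width bound of \cite{bubeck2020chasing} rather than read off the Hausdorff inequality directly.
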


\subsection{Constructing consistent sets online}\label{sec:psetonline}
Constructing consistent sets $\st{P}(\cl{D}_t)$ online can be addressed with tools from set-membership identification.
For a large collection of linear and nonlinear systems, the sets $\st{P}(\cl{D})$ can be constructed efficiently online. Such methods have been developed and studied in the literature of set-membership identification, for a recent survey see \citep{milanese2013bounding}. Moreover it is often possible to construct $\st{P}(\cl{D})$ as an intersection of finite half-spaces, allowing for tractable representations as LPs. To see a particularly simple example, consider the following nonlinear system with some 
 unknown parameters $\alpha^*\in\R^{M}$ and $\eta^*$, where $w_t$ is a vector with entries in the interval $[-\eta^*,\eta^*]$:
\begin{align}\label{eq:kernelized}
    x_{t+1} = \sum^{M}_{i=1} \alpha^*_i \psi_i(x_t,u_t) + w_t,
\end{align}
where $\psi_i:\cl{X}\times\cl{U} \mapsto \cl{X}$ are $M$ known nonlinear functions. If we represent the above system as an uncertain system $\mathbb{T}$ with parameter $\theta^* = [\alpha^*;\eta^*]$, it is easy to see that the consistent sets $\st{P}(\cl{D})$ for some data $\cl{D}=\{ (x^+_i,x_i,u_i)\:|\: 1\leq i\leq H\}$ takes the form of a polyhedron:
\begin{align*}
    \st{P}(\cl{D}) &= \left\{\theta=[\alpha;\eta]\:\left|\: \text{ s.t. }\eqref{eq:lpcondition}\text{ for all }1\leq i\leq H\right.\right\},
\end{align*}
defined by the inequalities:
\begin{subequations}
    \label{eq:lpcondition}
\begin{align}
     [\psi_1(x_i,u_i),\dots,\psi_M(x_i,u_i)]\alpha &\leq x^+_i + \mathds{1}\eta,\\
     [\psi_1(x_i,u_i),\dots,\psi_M(x_i,u_i)]\alpha &\geq x^+_i - \mathds{1}\eta. 
\end{align}
\end{subequations}
We can see that any linear discrete-time system can be put into the above form \eqref{eq:kernelized}. Moreover, as shown in section \ref{sec:instantiation} the above representation also applies for a large class of (nonlinear) robotics system.

\subsection{A general approach to weakly competitive consistent model chasing }\label{sec:generalcaseapisel}
\noindent In contrast to the stricter problem formulation in \defref{def:CCMC}, we can give a simple and general selection rule which is always $(\gamma,1)$-weakly competitive:
\begin{defn}[projection-based chasing]\label{def:selp2}
        Pick $\theta_t \in \st{P}(\cl{D}_t)$ always such that for some fixed $\gamma>0$, at every time-step $t$ holds $d(\theta_t,\theta_{t-1}) \leq \gamma d(\st{P}(\cl{D}_t),\theta_{t-1}).$ 
\end{defn}

\noindent This projection-based chasing algorithm might not always be tractable to implement, since it requires solving a potentially non-convex optimization problem with $\gamma$ relative accuracy. However, it describes a simple blueprint to design a general CMC algorithm $\sel$ which is weakly competitive in a general setting, that is, allowing for potentially \textit{infinite dimensional} metric spaces and \textit{non-convex} consistent sets $\st{P}(\cl{D}_t)$. Combined with a suitable oracle $\pi$, the resulting online control algorithm $\cl{A}_\pi(\sel_p)$ provides finite mistake guarantees for objectives $\bm{\cl{G}}$ according to \thmref{thm:redweak}.

\section{Examples}\label{sec:instantiation}
Here we demonstrate two illustrative examples of how to instantiate our approach: learning to stabilize a scalar linear system, and learning to track a trajectory on a fully actuated robotic system. Detailed derivations are provided in the Appendix \ref{app:instantiation}.

\subsection{Control of uncertain scalar linear system}
Consider the basic  setting of controlling a scalar linear system with unknown parameters and bounded disturbance $|w_k|\leq \eta<1$ : $$x_{k+1}= \alpha^* x_k+\beta^* u_k+w_k=:f^*(k,x_k,u_k),$$ with the goal to reach the interval $\mathcal{X}_{\mathcal{T}} = [-1,1]$ and remain there. Equivalently, this goal can be expressed as the objective $\bm{\cl{G}} = (\cl{G}_0, \cl{G}_1,\dots)$ with cost functions: $$\cl{G}_t(x,u):= \left\{\begin{array}{cl} 0,&\quad \text{if }|x| \leq 1\\ 1, &\quad\text{else} \end{array}\right., \quad \forall t \geq 0,$$
since "reaching and remaining in $\cl{X}_{\mathcal{T}}$" is equivalent to achieving $\bm{\cl{G}}$ within finite mistakes. 
The true parameter $\theta^* = (\alpha^*,\beta^*)$ lies in the set $\mathsf{K} = [-a,a]\times [1,1+2b_\Delta]$ with known $a>0$, $\eta<1$ and $b_{\Delta}>0$.

\textbf{Parametrization of uncertainty set. }
We describe the uncertainty set $\cl{F} = \cup_{\theta \in \st{K}}\mathbb{T}[\theta]$ through the compact parametrization $(\mathbb{T},\st{K}, d)$ with parameter space $(\mathsf{K},d)$, $d(x,y):= \|x-y\|$, $\|x\|:= |x_1|+a|x_2|$ and the collection of models: $$\mathbb{T}[\theta] := \{t,x,u\mapsto \theta_1 x + \theta_2 u +w_t\:|\: \|\bm{w}\|_{\infty} \leq \eta \}.$$

\textbf{Robust oracle $\pi$. }
We use the simple deadbeat controller: $\pi[\theta](t,x):= -(\theta_1/\theta_2)x$. It can be easily shown that $\pi$ is a locally $\rho$-uniformly robust oracle for $\bm{\cl{G}}$ for any margin in the interval $(0, 1-\eta)$.

\textbf{Construction of consistent sets via LPs.}
The consistent sets $\st{P}(\cl{D}_t)$ are convex, can be constructed online and are the intersection of $\st{K}$ with $2t$ halfspaces:
\begin{align*}
 \left\{\theta\in \st{K}\:|\: \text{s.t.: } \forall i< t: |\theta_1 x_i + \theta_2 u_i - x_{i+1}|\leq  \eta  \right\},
\end{align*}

\textbf{Competitive $\sel_s$ via NCBC. }
We instantiate  $\sel_s$ using Algorithm \ref{alg:ncbcsel} with Steiner point selection. From Corollary \ref{coro:pathlengthchasing}, we have $\gamma_s = \tfrac{n}{2} =1$.

\textbf{Mistake guarantee for $\cl{A}_\pi(\sel_s)$} 
The extension of the results in \thmref{thm:redext}, \ref{it:conv4} apply and we obtain for $\cl{A}_\pi(\sel_s)$ and the stabilization objective $\bm{\cl{G}}$, the following mistake guarantee:
\begin{align*}
    \sum^{\infty}_{t=0}{\cl{G}}_t(x_t,u_t) \leq 2e\ep^2 + \ep,\quad \ep =2(a+{b_\Delta}) .
\end{align*}
The above inequality shows that the worst-case total number of mistakes grows quadratically with the size of the initial uncertainty $\ep$ in the system parameters. Notice, however that the above inquality holds for arbitrary large choices of $a$ and $b_\Delta$. Thus, $\cl{A}_\pi(\sel_s)$ gives finite mistake guarantees for this problem setting for arbitrarily large system parameter uncertainties.

\subsection{Trajectory following in robotic systems}

We consider uncertain fully-actuated robotic systems. 
A vast majority of robotic systems can be modeled via the robotic equation of motion \citep{murray2017mathematical}:
\begin{equation}
\label{eq:equation_of_motion}
    \mathbf{M}_\eta(q)\ddot{q}+\mathbf{C}_\eta(q,\dot{q})\dot{q} + \mathbf{N}_\eta(q,\dot{q}) = \tau + \tau_d,
\end{equation}
where $q\in\R^{n}$ is the multi-dimensional generalized coordinates of the system, $\dot{q}$ and $\ddot{q}$ are its first and second (continuous) time derivatives, $\mathbf{M}_\eta(q), \mathbf{C}_\eta(q,\dot{q}), \mathbf{N}_\eta(q,\dot{q})$ are matrix and vector-value functions that depend on the parameters $\eta\in\mathbb{R}^m$ of the robotic system, i.e. $\eta$ comes from parametric physical model. 
 Often, $\tau$ is the control action (e.g., torques and forces of actuators), which acts as input of the system. Disturbances and other uncertainties present in the system can be modeled as additional torques $\tau_d \in \R^n$ perturbing the equations.  
%
The disturbances are bounded as $|\tau_d(t)| \leq \omega$, where $\omega\in\R^n$ and the inequality is entry-wise. 

Consider a system with unknown $\eta^*$, $\omega^*$, where the parameter $\theta^*=[\eta^*;\omega^*]$ is known to be contained in a bounded set $\st{K}$. Our goal is to track a desired trajectory $q_d$ given as a function of time $q_d:\mathbb{R}\mapsto \R^{n}$, within $\epsilon$ precision. Hence, denoting $x = [q^\top,\dot{q}^\top]^\top$ as the state vector and $x_d = [q^\top_d,\dot{q}^\top_d]^\top$ as the desired state, we want the system state trajectory $x(t)$ to satisfy:
 \begin{align}
    \limsup\limits_{t\rightarrow \infty}\left\|  x(t)-x_d(t)\right\|\leq \epsilon.
\end{align}
As common in practice, we assume we can observe the sampled measurements $x_k :=x(t_k)$, $x^d_k :=x^d(t_k)$ 
 and apply a constant control action (zero-order-hold actuation) $\tau_k:=\tau(t_k)$ at the discrete time-steps $t_k=k T_s$ with small enough sampling-time $T_s$ to allow for continuous-time control design and analysis. 

\textbf{Control objective $\bm{\cl{G}}^{\epsilon}$.}
We phrase trajectory tracking as a control objective $\bm{\cl{G}}^{\ep}$ with the cost functions: 
$$\cl{G}^\epsilon_k(x,u):= \left\{\begin{array}{cl} 0,&\quad \text{if }\|x-x^d_k\| \leq \epsilon\\ 1, &\quad\text{else} \end{array}\right., \quad \forall k \geq 0.$$
\textbf{Robust oracle design.} We outline in Appendix \ref{sec:app_experiment} how to design a control oracle using established methods for robotic manipulators \cite{Spongrobustcontrol}. 

\textbf{Constructing consistent sets.}
 For many robotic systems (for example robot manipulators),  one can derive from first principles \cite{murray2017mathematical} that the left-hand-side of \eqref{eq:equation_of_motion} can be factored into a $n\times m$ matrix of known functions $\mathbf{Y}(q,\dot{q},\ddot{q})$ and a constant vector $\eta\in\R^m$. We can then construct consistent sets at each time $t$ as polyhedrons of the form:
 \begin{equation}\label{eq:roboconsistx}
    \st{P}(\cl{D}_t) = \bigg\{ \theta\in\st{K} \in\mathbb{R}^{m+n}\bigg|\forall k\leq t:\: \mathbf{A}_k \theta \leq \mathbf{b}_k \bigg\}, 
    \end{equation}
    where $\mathbf{A}_k = \mathbf{A}_k(x_k,\tau_k)$ and $\mathbf{b}_k = \mathbf{b}_k(x_k,\tau_k)$ are a matrix and vector of ``features'' constructed from $u_k = \tau_k$ and $x_t$ via the known functional form of $\mathbf{Y}$.

\textbf{Designing $\pi$ and $\sel$. } 
We outline in Appendix \ref{app:instantiation} how to design a robust oracle based on a well-established robust control method for robotic manipulators proposed in \cite{Spongrobustcontrol}. Since the consistent sets are convex and can be constructed online, we can implement procedures $\sel_p$ and $\sel_s$ defined in \eqref{eq:sel} as competitive algorithms for the CMC problem.

\textbf{Mistake guarantee for $\cl{A}_\pi(\sel_{p/s})$} 
The resulting online algorithm $\cl{A}_\pi(\sel_p)$ or $\cl{A}_\pi(\sel_s)$ guarantees finiteness of the total number of mistakes $\sum^{\infty}_{k=0} \cl{G}^\epsilon_k(x_k,\tau_k)$ which implies the desired tracking behavior $\limsup_{k\rightarrow \infty}\|x-x^d\| \leq \epsilon$. Moreover, if we can provide a bound $M$ on the mistake constant $M^\pi_\rho< M$, we obtain from \thmref{thm:red}, \ref{it:conv4} the mistake guarantee: 
$$ \sum\limits^{\infty}_{k=0} \cl{G}^\epsilon_k(x_k,\tau_k)\leq M(\tfrac{2L}{\rho}+1),$$
which bounds the number of times the system could have a tracking error larger than $\epsilon$.

\section{Empirical Validation}\label{sec:empirical_validation}

We illustrate the practical potential for of our approach on a challenging cart-pole swing-up goal from limited amount of interaction. Compared to the standard cart-pole domain that is commonly used in RL \citep{brockman2016openai}, we introduce modifications motivated by real-world concerns in several important ways:
\begin{enumerate}[nosep]
    \item \emph{Goal specification}: the goal is to swing up and balance the cart-pole from a down position, which is significantly harder than balancing from the up-right position (the standard RL benchmark).
    \item \emph{Realistic dynamics}: we use a high-fidelity continuous-time nonlinear model, with noisy measurements of discrete-time state observations.
    \item \emph{Safety}: cart position has to be kept in a bounded interval for all time. In addition, acceleration should not exceed a specified maximum limit.
    \item \emph{Robustness to adversarially chosen system parameters}: We evaluate on 900 uncertainty settings, each with a different $\theta^*$ reflecting mass, length, and friction. The tuning parameter remains the same for all experiments. This robustness requirement amounts to a generalization goal in contemporary RL. 
    \item \emph{Other constraints}: no system reset is allowed during learning (i.e., a truly continual goal).
\end{enumerate}
 Our introduced modification make this goal significantly more challenging from both online learning and adaptive control perspective. Table \ref{tab:robustness} summarizes the results over 900 different parameter conditions (corresponding to 900 adversarial settings). See Appendix \ref{sec:app_experiment} for additional description of our  setup and results.

 \begin{table}[t]
    \centering
     \begin{tabular}{| c || c | c | c | c | c |} 
        \hline
     $\pi[\theta^*]$ & $0$ & $0.4$ & $0.99$ & $1$ & $1$ \TBstrut \\ 
     \hline
     $\cl{A}_\pi(\sel)$ & $0$ & $0.2$ & $0.8$ & $0.95$ & $1$ \TBstrut \\ 
     \hline
     \hline
     $T$ & $3\,s$ & $6\,s$ & $12\,s$ & $30\,s$ & $50\,s$ \TBstrut \\ 
     \hline
     \end{tabular}
     \caption{Fraction of experiments completing the swing up before time $T$: ideal policy $\pi[\theta^*]$ vs.  $\cl{A}_\pi(\sel)$ }
     \label{tab:robustness}
\end{table}
We employ well-established techniques to synthesize model-based oracles. The expert controllers are a hybrid combination of a linear state-feedback LQR around the upright position, a so-called energy-based swing-up controller (See \cite{aastrom2000swinging}) and a control barrier-function to respect the safety constraints \cite{SafetyCBFAmes2017}. 
%
As also described in \cite{dulac2019challenges}, adding constraints on state and acceleration makes learning the swing-up of the cart-pole a significantly harder goal for state-of-the art learning and control algorithms. 


Table \ref{tab:robustness} compares the online algorithm to the corresponding ideal oracle policy $\pi[\theta^*]$ shows that the online controller is only marginally slower.

\clearpage
\bibliography{neurips_paper}

\begin{thebibliography}{78}
\providecommand{\natexlab}[1]{#1}
\providecommand{\url}[1]{\texttt{#1}}
\expandafter\ifx\csname urlstyle\endcsname\relax
  \providecommand{\doi}[1]{doi: #1}\else
  \providecommand{\doi}{doi: \begingroup \urlstyle{rm}\Url}\fi

\bibitem[Abbasi-Yadkori and Szepesv{\'a}ri(2011)]{abbasi2011regret}
Yasin Abbasi-Yadkori and Csaba Szepesv{\'a}ri.
\newblock Regret bounds for the adaptive control of linear quadratic systems.
\newblock In \emph{Proceedings of the 24th Annual Conference on Learning
  Theory}, pages 1--26, 2011.

\bibitem[Abbasi-Yadkori et~al.(2018)Abbasi-Yadkori, Lazic, and
  Szepesv{\'a}ri]{abbasi2018regret}
Yasin Abbasi-Yadkori, Nevena Lazic, and Csaba Szepesv{\'a}ri.
\newblock Regret bounds for model-free linear quadratic control.
\newblock \emph{arXiv preprint arXiv:1804.06021}, 2018.

\bibitem[Agarwal et~al.(2019{\natexlab{a}})Agarwal, Bullins, Hazan, Kakade, and
  Singh]{agarwal2019online}
Naman Agarwal, Brian Bullins, Elad Hazan, Sham~M Kakade, and Karan Singh.
\newblock Online control with adversarial disturbances.
\newblock \emph{arXiv preprint arXiv:1902.08721}, 2019{\natexlab{a}}.

\bibitem[Agarwal et~al.(2019{\natexlab{b}})Agarwal, Hazan, and
  Singh]{agarwal2019logarithmic}
Naman Agarwal, Elad Hazan, and Karan Singh.
\newblock Logarithmic regret for online control.
\newblock In \emph{Advances in Neural Information Processing Systems}, pages
  10175--10184, 2019{\natexlab{b}}.

\bibitem[{Ames} et~al.(2017){Ames}, {Xu}, {Grizzle}, and
  {Tabuada}]{SafetyCBFAmes2017}
A.~D. {Ames}, X.~{Xu}, J.~W. {Grizzle}, and P.~{Tabuada}.
\newblock Control barrier function based quadratic programs for safety critical
  systems.
\newblock \emph{IEEE Transactions on Automatic Control}, 62\penalty0
  (8):\penalty0 3861--3876, Aug 2017.
\newblock ISSN 1558-2523.
\newblock \doi{10.1109/TAC.2016.2638961}.

\bibitem[Anderson et~al.(2001)Anderson, Brinsmead, Liberzon, and
  Stephen~Morse]{anderson2001multiple}
Brian~DO Anderson, Thomas Brinsmead, Daniel Liberzon, and A~Stephen~Morse.
\newblock Multiple model adaptive control with safe switching.
\newblock \emph{International journal of adaptive control and signal
  processing}, 15\penalty0 (5):\penalty0 445--470, 2001.

\bibitem[Argue et~al.(2019)Argue, Bubeck, Cohen, Gupta, and
  Lee]{argue2019nearly}
CJ~Argue, S{\'e}bastien Bubeck, Michael~B Cohen, Anupam Gupta, and Yin~Tat Lee.
\newblock A nearly-linear bound for chasing nested convex bodies.
\newblock In \emph{Proceedings of the Thirtieth Annual ACM-SIAM Symposium on
  Discrete Algorithms}, pages 117--122. SIAM, 2019.

\bibitem[Argue et~al.(2020)Argue, Gupta, Guruganesh, and
  Tang]{argue2020chasing}
CJ~Argue, Anupam Gupta, Guru Guruganesh, and Ziye Tang.
\newblock Chasing convex bodies with linear competitive ratio.
\newblock In \emph{Proceedings of the Fourteenth Annual ACM-SIAM Symposium on
  Discrete Algorithms}, pages 1519--1524. SIAM, 2020.

\bibitem[{\AA}str{\"o}m and Furuta(2000)]{aastrom2000swinging}
Karl~Johan {\AA}str{\"o}m and Katsuhisa Furuta.
\newblock Swinging up a pendulum by energy control.
\newblock \emph{Automatica}, 36\penalty0 (2):\penalty0 287--295, 2000.

\bibitem[Aubin and Frankowska(2009)]{aubin2009set}
Jean-Pierre Aubin and H{\'e}l{\`e}ne Frankowska.
\newblock \emph{Set-valued analysis}.
\newblock Springer Science \& Business Media, 2009.

\bibitem[Aylward et~al.(2008)Aylward, Parrilo, and
  Slotine]{aylward2008stability}
Erin~M Aylward, Pablo~A Parrilo, and Jean-Jacques~E Slotine.
\newblock Stability and robustness analysis of nonlinear systems via
  contraction metrics and sos programming.
\newblock \emph{Automatica}, 44\penalty0 (8):\penalty0 2163--2170, 2008.

\bibitem[Bansal et~al.(2017)Bansal, Chen, Herbert, and
  Tomlin]{bansal2017hamilton}
Somil Bansal, Mo~Chen, Sylvia Herbert, and Claire~J Tomlin.
\newblock Hamilton-jacobi reachability: A brief overview and recent advances.
\newblock In \emph{2017 IEEE 56th Annual Conference on Decision and Control
  (CDC)}, pages 2242--2253. IEEE, 2017.

\bibitem[Blanchini(1999)]{blanchini1999set}
Franco Blanchini.
\newblock Set invariance in control.
\newblock \emph{Automatica}, 35\penalty0 (11):\penalty0 1747--1767, 1999.

\bibitem[Blondel and Tsitsiklis(2000)]{blondel2000survey}
Vincent~D Blondel and John~N Tsitsiklis.
\newblock A survey of computational complexity results in systems and control.
\newblock \emph{Automatica}, 36\penalty0 (9):\penalty0 1249--1274, 2000.

\bibitem[Boffi et~al.(2020)Boffi, Tu, and Slotine]{boffi2020regret}
Nicholas~M Boffi, Stephen Tu, and Jean-Jacques~E Slotine.
\newblock Regret bounds for adaptive nonlinear control.
\newblock \emph{arXiv preprint arXiv:2011.13101}, 2020.

\bibitem[Borodin and El-Yaniv(2005)]{borodin2005online}
Allan Borodin and Ran El-Yaniv.
\newblock \emph{Online computation and competitive analysis}.
\newblock cambridge university press, 2005.

\bibitem[Borrelli et~al.(2017)Borrelli, Bemporad, and
  Morari]{borrelli2017predictive}
Francesco Borrelli, Alberto Bemporad, and Manfred Morari.
\newblock \emph{Predictive control for linear and hybrid systems}.
\newblock Cambridge University Press, 2017.

\bibitem[{Braatz} et~al.(1994){Braatz}, {Young}, {Doyle}, and
  {Morari}]{Braatzdoylecomplexity}
R.~P. {Braatz}, P.~M. {Young}, J.~C. {Doyle}, and M.~{Morari}.
\newblock Computational complexity of /spl mu/ calculation.
\newblock \emph{IEEE Transactions on Automatic Control}, 39\penalty0
  (5):\penalty0 1000--1002, 1994.
\newblock \doi{10.1109/9.284879}.

\bibitem[Brockman et~al.(2016{\natexlab{a}})Brockman, Cheung, Pettersson,
  Schneider, Schulman, Tang, and Zaremba]{brockman2016openai}
Greg Brockman, Vicki Cheung, Ludwig Pettersson, Jonas Schneider, John Schulman,
  Jie Tang, and Wojciech Zaremba.
\newblock Openai gym.
\newblock \emph{arXiv preprint arXiv:1606.01540}, 2016{\natexlab{a}}.

\bibitem[Brockman et~al.(2016{\natexlab{b}})Brockman, Cheung, Pettersson,
  Schneider, Schulman, Tang, and Zaremba]{openaigym}
Greg Brockman, Vicki Cheung, Ludwig Pettersson, Jonas Schneider, John Schulman,
  Jie Tang, and Wojciech Zaremba.
\newblock Openai gym, 2016{\natexlab{b}}.

\bibitem[Bubeck et~al.(2020)Bubeck, Klartag, Lee, Li, and
  Sellke]{bubeck2020chasing}
S{\'e}bastien Bubeck, Bo'az Klartag, Yin~Tat Lee, Yuanzhi Li, and Mark Sellke.
\newblock Chasing nested convex bodies nearly optimally.
\newblock In \emph{Proceedings of the Fourteenth Annual ACM-SIAM Symposium on
  Discrete Algorithms}, pages 1496--1508. SIAM, 2020.

\bibitem[Chen et~al.(2018)Chen, Goel, and Wierman]{chen2018smoothed}
Niangjun Chen, Gautam Goel, and Adam Wierman.
\newblock Smoothed online convex optimization in high dimensions via online
  balanced descent.
\newblock In \emph{Conference On Learning Theory (COLT)}, 2018.

\bibitem[Chen and Hazan(2020)]{chen2020black}
Xinyi Chen and Elad Hazan.
\newblock Black-box control for linear dynamical systems.
\newblock \emph{arXiv preprint arXiv:2007.06650}, 2020.

\bibitem[Cohen et~al.(2018)Cohen, Hasidim, Koren, Lazic, Mansour, and
  Talwar]{cohen2018online}
Alon Cohen, Avinatan Hasidim, Tomer Koren, Nevena Lazic, Yishay Mansour, and
  Kunal Talwar.
\newblock Online linear quadratic control.
\newblock In \emph{International Conference on Machine Learning}, pages
  1029--1038, 2018.

\bibitem[Corless and Leitmann(1981)]{corless1981continuous}
Martin Corless and George Leitmann.
\newblock Continuous state feedback guaranteeing uniform ultimate boundedness
  for uncertain dynamic systems.
\newblock \emph{IEEE Transactions on Automatic Control}, 26\penalty0
  (5):\penalty0 1139--1144, 1981.

\bibitem[Dahleh et~al.(1993)Dahleh, Theodosopoulos, and
  Tsitsiklis]{dahleh1993sample}
Munther~A Dahleh, Theodore~V Theodosopoulos, and John~N Tsitsiklis.
\newblock The sample complexity of worst-case identification of fir linear
  systems.
\newblock \emph{Systems \& control letters}, 20\penalty0 (3):\penalty0
  157--166, 1993.

\bibitem[Dean et~al.(2017)Dean, Mania, Matni, Recht, and Tu]{dean2017sample}
Sarah Dean, Horia Mania, Nikolai Matni, Benjamin Recht, and Stephen Tu.
\newblock On the sample complexity of the linear quadratic regulator.
\newblock \emph{Foundations of Computational Mathematics}, pages 1--47, 2017.

\bibitem[Dean et~al.(2018)Dean, Mania, Matni, Recht, and Tu]{dean2018regret}
Sarah Dean, Horia Mania, Nikolai Matni, Benjamin Recht, and Stephen Tu.
\newblock Regret bounds for robust adaptive control of the linear quadratic
  regulator.
\newblock In \emph{Advances in Neural Information Processing Systems}, pages
  4188--4197, 2018.

\bibitem[Donti et~al.(2021)Donti, Roderick, Fazlyab, and
  Kolter]{donti2021enforcing}
Priya~L Donti, Melrose Roderick, Mahyar Fazlyab, and J~Zico Kolter.
\newblock Enforcing robust control guarantees within neural network policies.
\newblock In \emph{International Conference on Learning Representations
  (ICLR)}, 2021.

\bibitem[Dudley(1987)]{dudleypacking87}
R.~M. Dudley.
\newblock Universal donsker classes and metric entropy.
\newblock \emph{The Annals of Probability}, 15\penalty0 (4):\penalty0
  1306--1326, 1987.
\newblock ISSN 00911798.
\newblock URL \url{http://www.jstor.org/stable/2244004}.

\bibitem[Dulac-Arnold et~al.(2019)Dulac-Arnold, Mankowitz, and
  Hester]{dulac2019challenges}
Gabriel Dulac-Arnold, Daniel Mankowitz, and Todd Hester.
\newblock Challenges of real-world reinforcement learning.
\newblock \emph{arXiv preprint arXiv:1904.12901}, 2019.

\bibitem[Fiechter(1997)]{fiechter1997pac}
Claude-Nicolas Fiechter.
\newblock Pac adaptive control of linear systems.
\newblock In \emph{Proceedings of the tenth annual conference on Computational
  learning theory}, pages 72--80, 1997.

\bibitem[Freeman and Kokotovic(2008)]{freeman2008robust}
Randy Freeman and Petar~V Kokotovic.
\newblock \emph{Robust nonlinear control design: state-space and Lyapunov
  techniques}.
\newblock Springer Science \& Business Media, 2008.

\bibitem[Friedman and Linial(1993)]{friedman1993convex}
Joel Friedman and Nathan Linial.
\newblock On convex body chasing.
\newblock \emph{Discrete \& Computational Geometry}, 9\penalty0 (3):\penalty0
  293--321, 1993.

\bibitem[Goel and Wierman(2019)]{goel2019online}
Gautam Goel and Adam Wierman.
\newblock An online algorithm for smoothed regression and lqr control.
\newblock In \emph{International Conference on Artificial Intelligence and
  Statistics (AISTATS)}, 2019.

\bibitem[{Gurriet} et~al.(2018){Gurriet}, {Mote}, {Ames}, and
  {Feron}]{GurrietAmes2018}
T.~{Gurriet}, M.~{Mote}, A.~D. {Ames}, and E.~{Feron}.
\newblock An online approach to active set invariance.
\newblock In \emph{2018 IEEE Conference on Decision and Control (CDC)}, pages
  3592--3599, Dec 2018.
\newblock \doi{10.1109/CDC.2018.8619139}.

\bibitem[Hazan et~al.(2020)Hazan, Kakade, and Singh]{hazan2019nonstochastic}
Elad Hazan, Sham~M Kakade, and Karan Singh.
\newblock The nonstochastic control problem.
\newblock In \emph{Conference on Algorithmic Learning Theory (ALT)}, 2020.

\bibitem[Hespanha et~al.(2003)Hespanha, Liberzon, and
  Morse]{hespanha2003overcoming}
Joao~P Hespanha, Daniel Liberzon, and A~Stephen Morse.
\newblock Overcoming the limitations of adaptive control by means of
  logic-based switching.
\newblock \emph{Systems \& control letters}, 49\penalty0 (1):\penalty0 49--65,
  2003.

\bibitem[Hill et~al.(2018)Hill, Raffin, Ernestus, Gleave, Traore, Dhariwal,
  Hesse, Klimov, Nichol, Plappert, et~al.]{hill2018stable}
Ashley Hill, Antonin Raffin, Maximilian Ernestus, Adam Gleave, Rene Traore,
  Prafulla Dhariwal, Christopher Hesse, Oleg Klimov, Alex Nichol, Matthias
  Plappert, et~al.
\newblock Stable baselines.
\newblock \emph{GitHub repository}, 2018.

\bibitem[Ho and Doyle(2019)]{ho2019robust}
Dimitar Ho and John~C Doyle.
\newblock Robust model-free learning and control without prior knowledge.
\newblock In \emph{2019 IEEE 58th Conference on Decision and Control (CDC)},
  pages 4577--4582. IEEE, 2019.

\bibitem[Ioannou and Fidan(2006)]{ioannou2006adaptive}
Petros Ioannou and Bari{\c{s}} Fidan.
\newblock \emph{Adaptive control tutorial}.
\newblock SIAM, 2006.

\bibitem[Ioannou and Sun(2012)]{ioannou2012robust}
Petros~A Ioannou and Jing Sun.
\newblock \emph{Robust adaptive control}.
\newblock Courier Corporation, 2012.

\bibitem[Jiang et~al.(1999)Jiang, Sontag, and Wang]{ISSSontag}
Zhong-Ping Jiang, Eduardo Sontag, and Yuan Wang.
\newblock Input-to-state stability for discrete-time nonlinear systems.
\newblock \emph{IFAC Proceedings Volumes}, 32\penalty0 (2):\penalty0 2403 --
  2408, 1999.
\newblock 14th IFAC World Congress 1999, Beijing, Chia, 5-9 July.

\bibitem[Kakade et~al.(2020)Kakade, Krishnamurthy, Lowrey, Ohnishi, and
  Sun]{kakade2020information}
Sham Kakade, Akshay Krishnamurthy, Kendall Lowrey, Motoya Ohnishi, and Wen Sun.
\newblock Information theoretic regret bounds for online nonlinear control.
\newblock In \emph{Advances in Neural Information Processing Systems
  (NeurIPS)}, 2020.

\bibitem[Khalil and Grizzle(2002)]{khalil2002nonlinear}
Hassan~K Khalil and Jessy~W Grizzle.
\newblock \emph{Nonlinear systems}, volume~3.
\newblock Prentice hall Upper Saddle River, NJ, 2002.

\bibitem[Koutsoupias and Papadimitriou(2000)]{koutsoupias2000beyond}
Elias Koutsoupias and Christos~H Papadimitriou.
\newblock Beyond competitive analysis.
\newblock \emph{SIAM Journal on Computing}, 30\penalty0 (1):\penalty0 300--317,
  2000.

\bibitem[Krstic et~al.(1995)Krstic, Kokotovic, and
  Kanellakopoulos]{backstepkrstic}
Miroslav Krstic, Petar~V. Kokotovic, and Ioannis Kanellakopoulos.
\newblock \emph{Nonlinear and Adaptive Control Design}.
\newblock John Wiley \& Sons, Inc., 1995.

\bibitem[Liu et~al.(2009)Liu, Su, Yao, and Chu]{liu2009adaptive}
Xiangbin Liu, Hongye Su, Bin Yao, and Jian Chu.
\newblock Adaptive robust control of nonlinear systems with dynamic
  uncertainties.
\newblock \emph{International Journal of Adaptive Control and Signal
  Processing}, 23\penalty0 (4):\penalty0 353--377, 2009.

\bibitem[Ljung(1999)]{lennart1999system}
Lennart Ljung.
\newblock System identification: theory for the user.
\newblock \emph{PTR Prentice Hall, Upper Saddle River, NJ}, 28, 1999.

\bibitem[Mania et~al.(2020)Mania, Jordan, and Recht]{mania2020active}
Horia Mania, Michael~I Jordan, and Benjamin Recht.
\newblock Active learning for nonlinear system identification with guarantees.
\newblock \emph{arXiv preprint arXiv:2006.10277}, 2020.

\bibitem[Mayne et~al.(2011)Mayne, Kerrigan, Van~Wyk, and Falugi]{mayne2011tube}
David~Q Mayne, Erric~C Kerrigan, EJ~Van~Wyk, and P~Falugi.
\newblock Tube-based robust nonlinear model predictive control.
\newblock \emph{International Journal of Robust and Nonlinear Control},
  21\penalty0 (11):\penalty0 1341--1353, 2011.

\bibitem[Milanese et~al.(2013)Milanese, Norton, Piet-Lahanier, and
  Walter]{milanese2013bounding}
Mario Milanese, John Norton, H{\'e}l{\`e}ne Piet-Lahanier, and {\'E}ric Walter.
\newblock \emph{Bounding approaches to system identification}.
\newblock Springer Science \& Business Media, 2013.

\bibitem[Moore and Tedrake(2014)]{moore2014adaptive}
Joseph Moore and Russ Tedrake.
\newblock Adaptive control design for underactuated systems using
  sums-of-squares optimization.
\newblock In \emph{2014 American Control Conference}, pages 721--728. IEEE,
  2014.

\bibitem[Murray(2017)]{murray2017mathematical}
Richard~M Murray.
\newblock \emph{A mathematical introduction to robotic manipulation}.
\newblock CRC press, 2017.

\bibitem[Nguyen and Dankowicz(2015)]{nguyen2015adaptive}
Kim-Doang Nguyen and Harry Dankowicz.
\newblock Adaptive control of underactuated robots with unmodeled dynamics.
\newblock \emph{Robotics and Autonomous Systems}, 64:\penalty0 84--99, 2015.

\bibitem[Ortega and Spong(1989)]{OrtegaSpong89}
Romeo Ortega and Mark~W. Spong.
\newblock Adaptive motion control of rigid robots: A tutorial.
\newblock \emph{Automatica}, 25\penalty0 (6):\penalty0 877 -- 888, 1989.
\newblock ISSN 0005-1098.

\bibitem[Parrilo(2000)]{parrilo2000structured}
Pablo~A Parrilo.
\newblock \emph{Structured semidefinite programs and semialgebraic geometry
  methods in robustness and optimization}.
\newblock PhD thesis, California Institute of Technology, 2000.

\bibitem[Polycarpou and Ioannou(1993)]{polycarpou1993robust}
Marios~M Polycarpou and Petros~A Ioannou.
\newblock A robust adaptive nonlinear control design.
\newblock In \emph{1993 American Control Conference}, pages 1365--1369. IEEE,
  1993.

\bibitem[Rakovic and Baric(2010)]{rakovic2010parameterized}
Sa{\v{s}}a~V Rakovic and Miroslav Baric.
\newblock Parameterized robust control invariant sets for linear systems:
  Theoretical advances and computational remarks.
\newblock \emph{IEEE Transactions on Automatic Control}, 55\penalty0
  (7):\penalty0 1599--1614, 2010.

\bibitem[Rakovic et~al.(2006)Rakovic, Teel, Mayne, and
  Astolfi]{rakovic2006simple}
SV~Rakovic, AR~Teel, DQ~Mayne, and A~Astolfi.
\newblock Simple robust control invariant tubes for some classes of nonlinear
  discrete time systems.
\newblock In \emph{Proceedings of the 45th IEEE Conference on Decision and
  Control}, pages 6397--6402. IEEE, 2006.

\bibitem[Rudin et~al.(1976)]{rudin1976principles}
Walter Rudin et~al.
\newblock \emph{Principles of mathematical analysis}, volume~3.
\newblock McGraw-hill New York, 1976.

\bibitem[Sastry(2013)]{sastry2013nonlinear}
Shankar Sastry.
\newblock \emph{Nonlinear systems: analysis, stability, and control},
  volume~10.
\newblock Springer Science \& Business Media, 2013.

\bibitem[Schulman et~al.(2017)Schulman, Wolski, Dhariwal, Radford, and
  Klimov]{schulman2017proximal}
John Schulman, Filip Wolski, Prafulla Dhariwal, Alec Radford, and Oleg Klimov.
\newblock Proximal policy optimization algorithms, 2017.

\bibitem[Sellke(2020)]{sellke2020chasing}
Mark Sellke.
\newblock Chasing convex bodies optimally.
\newblock In \emph{Proceedings of the Fourteenth Annual ACM-SIAM Symposium on
  Discrete Algorithms}, pages 1509--1518. SIAM, 2020.

\bibitem[Shi et~al.(2020)Shi, Lin, Chung, Yue, and Wierman]{shi2020online}
Guanya Shi, Yiheng Lin, Soon-Jo Chung, Yisong Yue, and Adam Wierman.
\newblock Online optimization with memory and competitive control.
\newblock In \emph{Advances in Neural Information Processing Systems
  (NeurIPS)}, 2020.

\bibitem[Simchowitz et~al.(2018)Simchowitz, Mania, Tu, Jordan, and
  Recht]{simchowitz2018learning}
Max Simchowitz, Horia Mania, Stephen Tu, Michael~I Jordan, and Benjamin Recht.
\newblock Learning without mixing: Towards a sharp analysis of linear system
  identification.
\newblock In \emph{Conference On Learning Theory}, pages 439--473. PMLR, 2018.

\bibitem[Slotine et~al.(1991)Slotine, Li, et~al.]{slotine1991applied}
Jean-Jacques~E Slotine, Weiping Li, et~al.
\newblock \emph{Applied nonlinear control}, volume 199.
\newblock Prentice hall Englewood Cliffs, NJ, 1991.

\bibitem[{Spong} and {Vidyasagar}(1987)]{spongrobustlinear87}
M.~{Spong} and M.~{Vidyasagar}.
\newblock Robust linear compensator design for nonlinear robotic control.
\newblock \emph{IEEE Journal on Robotics and Automation}, 3\penalty0
  (4):\penalty0 345--351, 1987.

\bibitem[{Spong}(1992{\natexlab{a}})]{Spongrobust92}
M.~W. {Spong}.
\newblock On the robust control of robot manipulators.
\newblock \emph{IEEE Transactions on Automatic Control}, 37\penalty0
  (11):\penalty0 1782--1786, 1992{\natexlab{a}}.

\bibitem[{Spong}(1992{\natexlab{b}})]{Spongrobustcontrol}
M.~W. {Spong}.
\newblock On the robust control of robot manipulators.
\newblock \emph{IEEE Transactions on Automatic Control}, 37\penalty0
  (11):\penalty0 1782--1786, 1992{\natexlab{b}}.
\newblock \doi{10.1109/9.173151}.

\bibitem[Tassa et~al.(2018)Tassa, Doron, Muldal, Erez, Li, de~Las~Casas,
  Budden, Abdolmaleki, Merel, Lefrancq, Lillicrap, and
  Riedmiller]{tassa2018deepmind}
Yuval Tassa, Yotam Doron, Alistair Muldal, Tom Erez, Yazhe Li, Diego
  de~Las~Casas, David Budden, Abbas Abdolmaleki, Josh Merel, Andrew Lefrancq,
  Timothy Lillicrap, and Martin Riedmiller.
\newblock Deepmind control suite, 2018.

\bibitem[Tedrake(2020)]{underactuated}
Russ Tedrake.
\newblock Underactuated robotics: Algorithms for walking, running, swimming,
  flying, and manipulation.
\newblock (Course Notes for MIT 6.832), 2020.
\newblock Downloaded on 2020-03-30 from http://underactuated.mit.edu.

\bibitem[Vaidyanathan et~al.(2016)Vaidyanathan, Volos,
  et~al.]{vaidyanathan2016advances}
Sundarapandian Vaidyanathan, Christos Volos, et~al.
\newblock \emph{Advances and applications in nonlinear control systems}.
\newblock Springer, 2016.

\bibitem[Yao and Tomizuka(1995)]{yao1995robust}
Bin Yao and Masayoshi Tomizuka.
\newblock Robust adaptive nonlinear control with guaranteed transient
  performance.
\newblock In \emph{Proceedings of 1995 American Control Conference-ACC'95},
  volume~4, pages 2500--2504. IEEE, 1995.

\bibitem[Yu et~al.(2020)Yu, Shi, Chung, Yue, and Wierman]{yu2020power}
Chenkai Yu, Guanya Shi, Soon-Jo Chung, Yisong Yue, and Adam Wierman.
\newblock The power of predictions in online control.
\newblock 2020.

\bibitem[Zhou and Doyle(1998)]{zhou1998essentials}
Kemin Zhou and John~Comstock Doyle.
\newblock \emph{Essentials of robust control}, volume 104.
\newblock Prentice hall Upper Saddle River, NJ, 1998.

\bibitem[Zhou et~al.(1996)Zhou, Doyle, and Glover]{zhou1996robust}
Kemin Zhou, John~C Doyle, and Keith Glover.
\newblock Robust and optimal control.
\newblock 1996.

\bibitem[Ziemann and Sandberg(2020)]{Zieman}
Ingvar Ziemann and Henrik Sandberg.
\newblock {Regret Lower Bounds for Unbiased Adaptive Control of Linear
  Quadratic Regulators}.
\newblock working paper or preprint, February 2020.
\newblock URL \url{https://hal.archives-ouvertes.fr/hal-02404014}.

\end{thebibliography}
\clearpage
\appendix 

\section{Examples}\label{app:instantiation}
Here we demonstrate couple examples of how the framework presented in \secref{sec:mainresult} can be applied to analyze and design learning and control agents with worst-case guarantees. We first discuss the problem of stabilizing an uncertain scalar linear system. In the later section we discuss implications for learning and control of uncertain robotic systems: We show a simple design for $\cl{A}_\pi(\sel)$ agents with finite mistake guarantees; we use well-known control methods in robotics to design robust oracles $\pi$ and couple it with $\sel$ selections based on competitive (NCBC) algorithms.
\subsection{Control of uncertain scalar linear system}
Let's consider the very basic problem setting, where we are given an unknown scalar linear system $$x_{k+1}= \alpha^* x_k+\beta^* u_k+w_k=:f^*(k,x_k,u_k),$$ s.t. $|w_k| \leq \gamma^*\leq \eta <1$ and $\alpha^* \in [-a,a]$, $\beta^*\in [1,1+ 2b_{\Delta}]$, and our goal is to reach the target interval $\mathcal{X}_{\mathcal{T}} = [-1,1]$ and remain there. We can equivalently phrase this as to achieve the objective $\bm{\cl{G}} = (\cl{G}_0, \cl{G}_1,\dots)$ with cost functions $$\cl{G}_t(x,u):= \left\{\begin{array}{cl} 0,&\quad \text{if }|x| \leq 1\\ 1, &\quad\text{else} \end{array}\right., \quad \forall t \geq 0$$
after finitely many mistakes. 

\paragraph{Compact parametrization of uncertainty set. }
We define our parameter space as $\mathsf{K} = [-a,a]\times [1,1+2b_\Delta]$, and define the true parameter as $\theta^*=(\theta^*_x,\theta^*_u)=(\alpha^*,\beta^*)$ and parametrize the uncertainty set as $\cl{F}=\cup_{\theta\in\st{K}}\mathbb{T}[\theta]$ with 
$$\mathbb{T}[\theta] := \{t,x,u\mapsto \theta_x x + \theta_u u +w_t\:|\: \|\bm{w}\|_{\infty} \leq \eta \} $$
 We choose the metric as $d(\theta,\theta'):= |\theta_x-\theta'_x|+a|\theta_u-\theta'_u|$.
 The diameter of the metric space $(\mathsf{K},d)$ is $\phi(\mathsf{K})=d((-a,1),(a,1+2b_{\Delta})) = 2(a+b_{\Delta})$. 

\paragraph{A locally uniformly robust oracle. }
 As an oracle, we take the simple deadbeat controller: $\pi[\theta](t,x):= -(\theta_x/\theta_u)x$. It can be easily shown that $\pi$ is a locally $\rho$-uniformly robust oracle for $\bm{\cl{G}}$ for any margin in the interval $(0, \ol{\rho})$, $\ol{\rho}:=1-\eta$, by noticing the inequality:
 \begin{align}
    |x_{t+1}| &\leq |\theta^*_x x_t + \theta^*_u \pi[\theta_t](t,x_t)| + \eta = |((\theta^*_x-\theta_{x,t}) - (\theta^*_u-\theta_{u,t}) \tfrac{\theta_{x,t}}{\theta_{u,t}})x_t| + \eta\\ 
    & \leq (|\theta^*_x-\theta_{x,t}| + |\theta^*_u-\theta_{u,t}||\tfrac{\theta_{x,t}}{\theta_{u,t}}|)|x_t| + \eta \leq d(\theta^*,\theta_t)|x_t| + \eta
 \end{align}
 To obtain the mistake function $M^\pi_\rho$ for a fixed $\rho\in (0,1-\eta)$, notice that if 
 $d(\theta^*,\theta_t)\leq \rho$, then 
 \begin{align}
|x_{t+1}| \leq \rho |x_t| + \eta = \rho |x_t| + (1-\rho)\tfrac{1}{1-\rho} \eta 
\quad&\Leftrightarrow\quad |x_{t+1}|-\tfrac{\eta}{1-\rho}\leq \rho(|x_t| -\tfrac{\eta}{1-\rho}) \\
&\Rightarrow \quad |x_t| \leq \tfrac{\eta}{1-\rho} + \rho^t (|x_0| - \tfrac{\eta}{1-\rho})
 \end{align}
 Notice that $$ \tfrac{\eta}{1-\rho} + \rho^t|x_0|<1 \Leftrightarrow t> \tfrac{\log(|x_0|)}{\log(\rho^{-1})} + \underbrace{\tfrac{\log(1-\rho) - \log(1-\rho - \eta)}{\log(\rho^{-1})}}_{c(\rho)} $$
 which implies the mistake function 
 \begin{align}
    M^\pi_\rho(\gamma) \leq \tfrac{\log(\gamma)}{\log(\rho^{-1})}+c(\rho)
 \end{align}
\paragraph{ Construction of consistent sets via LPs.}
The consistent set for the data set $\cl{D}_N$ of $N$ observed system transitions $(x^+_i,x_i,u_i)$ can be written as an intersection of $\st{K}$ with $2N$ halfspaces: 
\begin{align*}
\st{P}(\cl{D}_N) = \left\{\theta\in \st{K}\:|\: \text{s.t.: } \forall 1\leq i\leq N:\: x^+_i - \eta  \leq \theta_x x_i + \theta_u u_i \leq x^+_i + \eta  \right\},
\end{align*}
It can be constructed online and is convex.
\paragraph{ Competitive consistent model chasing via steiner point.}
We can construct a competitive CMC-algorithm by using algorithms for competitive NCBC. Assume we use the steiner point and denote the selection procedure $\sel_s$ as in \eqref{eq:sel}. 
 $\sel_s$ is a $\tfrac{n}{2} = 1$-competitive CMC algorithm in euclidean space and since the euclidean norm is bounded above by the 1-norm, $\sel_s$ is also $1$-competitive w.r.t. the metric space $(\st{K},d)$. 
\paragraph{Mistake guarantee for $\cl{A}_\pi(\sel_s)$} 
We apply the extension of the results in \thmref{thm:redext}, \ref{it:conv4}. It is easy to see that our $\pi$ satisfies the extra condition with $\alpha = 1$, $\beta = \eta$. Assuming $|x_0| = 0$, the constant $\gamma_\infty$ takes the value
\begin{align}
    \gamma_\infty = e^{\alpha \phi(\mathsf{K})}(\|x_0\| + \beta \tfrac{e}{e -1}) =  \tfrac{\eta e}{e -1}e^{\phi(\mathsf{K})}.
\end{align}
For ease of exposition, assume that $\eta=e^{-1}$ and that we picked $\rho = e^{-1}$. This gives us $M^\pi_\rho(\gamma_\infty)=\phi(\mathsf{K})-\log(e-2)$ and substituting all constants gives us a finite mistake guarantee for the objective $\bm{\cl{G}}$:
\begin{align}
    \sum^{\infty}_{t=0}{\cl{G}}_t(x_t,u_t) \leq M^\pi_\rho(\gamma_\infty)\Big(\tfrac{2L}{\rho} +1\Big) \approx  \phi(\mathsf{K})(1+2e\phi(\mathsf{K})) = 8e(a+{b_\Delta})^2 + 2(a+b_{\Delta})
\end{align}
The above inequality shows that the worst-case total number of mistakes grows quadratically with the size of the initial uncertainty in the system parameters $\theta_x$ and $\theta_u$. Notice, however that the above inquality holds for arbitrary large choices of $a$ and $b_\Delta$. Thus, $\cl{A}_\pi(\sel_s)$ gives finite mistake guarantees for this problem setting for arbitrarily large system parameter uncertainties.

\subsection{Learning to follow a trajectory for a class of robotic systems}

Consider a general case of online control of uncertain fully-actuated robotic systems. A vast majority of robotic systems can be modeled via the robotic equation of motion \citep{murray2017mathematical}:
\begin{equation}
\label{eq:equation_of_motion}
    \mathbf{M}_\eta(q)\ddot{q}+\mathbf{C}_\eta(q,\dot{q})\dot{q} + \mathbf{N}_\eta(q,\dot{q}) = \tau + \tau_d
\end{equation}
where $q\in\R^{n}$ is the multi-dimensional generalized coordinates of the system, $\dot{q}$ and $\ddot{q}$ are its first and second (continuous) time derivatives, $\mathbf{M}_\eta(q), \mathbf{C}_\eta(q,\dot{q}), \mathbf{N}_\eta(q,\dot{q})$ are matrix and vector-value functions that depend on the parameters $\eta\in\mathbb{R}^m$ of the robotic system, i.e. $\eta$ comes from a parametric physical model. 
 Often, $\tau$ is the control action (e.g., torques and forces of actuators), which acts as input of the system. Disturbances and other uncertainties present in the system can be modeled as additional torques $\tau_d \in \R^n$ perturbing the equations.  Moreover, one can derive from first principles \cite{murray2017mathematical}, that for many robotic systems (for example robot manipulators) the following two properties hold:
\begin{subequations}
    \label{eq:lagrange}
 \begin{align}
    \dot{\mathbf{M}_\eta}(q)-2\mathbf{C}_\eta(q,\dot{q})\text{ is skew-symmetric} \\
    \label{eq:laglab}\mathbf{M}_\eta(q)\ddot{q} + \mathbf{C}_\eta(q,\dot{q})\dot{q} + \mathbf{N}_\eta(q,\dot{q}) = \mathbf{Y}(q,\dot{q},\ddot{q})\eta
 = \tau +\tau_d \end{align}  
\end{subequations}  
The second equation says that the left-hand-side of equation \eqref{eq:equation_of_motion} can always be factored into a $n\times m$ matrix of known functions $\mathbf{Y}(q,\dot{q},\ddot{q})$ and a constant vector $\eta\in\R^m$.
Assume that the disturbances are bounded at each time $t$, as $|\tau_d(t)| \leq \omega$, $\omega\in\R^n$ and where the inequality is to be read entry-wise. Consider that we are given a system with unknown $\eta^*$, $\omega^*$, where the parameter $\theta^*=[\eta^*;\omega^*]$ is known to be contained in a bounded set $\st{K}$. Assume that our goal is to track a desired trajectory $q_d$, which is given as a function of time $q_d:\mathbb{R}\mapsto \R^{n}$, within $\epsilon$ precision. Denoting $x = [q^\top,\dot{q}^\top]^\top$ as the state vector and $x_d = [q^\top_d,\dot{q}^\top_d]^\top$ as the desired state, we want the system state trajectory $x(t)$ to satisfy:
 \begin{align}
    \limsup\limits_{t\rightarrow \infty}\left\|  x(t)-x_d(t)\right\|\leq \epsilon
\end{align}
As common in practice, we assume we can observe the sampled measurements $x_k :=x(t_k)$, $x^d_k :=x^d(t_k)$ 
 and apply a constant control action (zero-order-hold actuation) $\tau_k:=\tau(t_k)$ at the discrete time-steps $t_k=k T_s$ with small enough sampling-time $T_s$ to allow for continuous-time control design and analysis. 

\paragraph{Control objective $\bm{\cl{G}}^{\epsilon}$.}
We phrase trajectory tracking as a control objective $\bm{\cl{G}}^{\ep}$ with the cost functions 
$$\cl{G}^\epsilon_k(x,u):= \left\{\begin{array}{cl} 0,&\quad \text{if }\|x-x^d_k\| \leq \epsilon\\ 1, &\quad\text{else} \end{array}\right., \quad \forall k \geq 0,$$
which we wish to achieve online with finite mistake guarantees against the uncertainty set $\cl{F}$:
\begin{align*}
    \cl{F}=\bigcup_{\theta\in\st{K}} \mathbb{T}[\theta],\text{ where }\mathbb{T}[\theta]:=\{k,x_k,\tau_k \mapsto f^*(x_k,\tau_k,\tau_d(\anon);\theta)\:|\:\tau_d:[0,T_s]\mapsto \R^n, \|\tau_d\|_\infty \leq \omega \},
\end{align*} 
The function $f^*$ denotes the discretized dynamics of \eqref{eq:equation_of_motion} w.r.t. the sampling time $T_s$.

\paragraph{Robust oracle design.} We outline how to design a robust oracle based on a well-established robust control method for robotic manipulators  proposed in \cite{Spongrobustcontrol}. 
Define $v$, $a$ and $r$ as the quantities
\begin{align}\label{eq:spongdefs}
v = \dot{q}_d-\Lambda \dot{q},\quad a=\dot{v},\quad r = \dot{\tilde{q}} + \Lambda \tilde{q}, \quad \tilde{q}=q-q_d
\end{align}
and denote $\mathbf{Y}'(q,\dot{q},v,a)$ as the corresponding $n\times m$ matrix which allows the factorization:
\begin{align}
    \mathbf{M}_\eta(q)a + \mathbf{C}_\eta(q,\dot{q})v + \mathbf{N}_\eta(q,\dot{q}) = \mathbf{Y}'(q,\dot{q},v,a)\eta
\end{align}
Based on the control law presented in \cite{Spongrobustcontrol}, we define the oracle $\pi[\theta](k,x_k)$ for $x = [q;\dot{q}]$ and $\theta=[\eta;\omega]$ through the equations:
\begin{align}\label{eq:oracspong}
\pi[\theta](k,x_k) &= \mathbf{Y}'(q_k,\dot{q}_k,v_k,a_k)(\eta + u_k) - K_\omega r_k, \quad u = \left\{\begin{array}{lc} -\rho\frac{\mathbf{Y}'^\top r_k}{\|\mathbf{Y}'^\top r_k\|_2} & \text{if }\|\mathbf{Y}'^\top r_k\|_2 > \ep \\ -\frac{\rho}{\ep} \mathbf{Y}'^\top r_k   & \text{if }\|\mathbf{Y}'^\top r_k\|_2 \leq \ep\end{array} \right.
\end{align}
where $\Lambda, K_\omega \succ 0$ are diagonal positive definite design and where $\rho$, $\ep$ are design variables.
Following the the analysis in \cite{Spongrobustcontrol} and  \cite{corless1981continuous} one can design a suiting gain $K_\omega$ in terms of $\omega$, such that $\pi$ is a uniformly $\rho$ robust oracle for $\bm{\cl{G}}^\epsilon$ in the compact parametrization $(\mathbb{T},\st{K},d)$.
\begin{rem}
    The analysis in \cite{Spongrobustcontrol} shows that uniform ultimate boundedness properties of the tracking error $\tilde{x} = [q-q_d;\dot{q}-\dot{q}_d]$ are preserved, if we replace $\eta$ in equation \eqref{eq:oracspong} with some perturbation $\eta + \delta(t)$, $\|\delta(t)\|_2 \leq \rho$ for all $t$. In \cite{Spongrobustcontrol}, the disturbance $\tau_d$ is assumed zero, i.e. the $\omega=0$ case, and the gain $K_0$ is left as a tuning variable. However, with standard Lyapunov arguments the analysis of \cite{Spongrobustcontrol} can be extended to consider the non-zero disturbance case and specify gains $K_\omega$ for each $\omega$ such that the above oracle $\pi$ becomes a uniformly $\rho$ robust oracle for the above objective $\bm{\cl{G}}^\epsilon$: For each $\omega$, increase the gain $K_\omega$ until the uniform ultimate boundedness guarantee implies the desired $\epsilon$-tracking behavior described by $\bm{\cl{G}}^\epsilon$.
 \end{rem}

\paragraph{Constructing consistent sets.}
The linear factorization property \eqref{eq:laglab} can be exploited to construct convex consistent sets. Denote $\mathbb{T}$ as an uncertain robotic system \eqref{eq:lagrange} with some convex compact uncertainty $\st{K}$ in euclidean space $(\R^{m+n},\|\anon\|_2)$. Recall that we parameterize the bound on the disturbance by $\omega\in\R^n$, i.e., $\lvert\tau_d\rvert\leq \omega$ holds entry-wise and that our system parameter is represented by $\theta = [\eta^\top, w^\top]^\top \in \st{K}$. At the sampled time-steps $t_k$, equations \eqref{eq:laglab} says that the measurements $q_k,\dot{q}_k, \ddot{q}_k, \tau_k$ enforce the following entry-wise condition on consistent parameters $\eta$ and $\omega$:
\begin{align}
    \tau_k - \omega \leq \mathbf{Y}(q_k,\dot{q}_k,\ddot{q}_k)\eta \leq \tau_k + \omega.
\end{align}
\noindent In matrix form, the consistent set is captured via the following relationship:
\begin{align}
 \underbrace{\begin{bmatrix}\mathbf{Y}(q_k,\dot{q}_k,\ddot{q}_k) & -\mathbf{I}_n \\ -\mathbf{Y}(q_k,\dot{q}_k,\ddot{q}_k) & -\mathbf{I}_n \end{bmatrix}}_{\mathbf{A}_k}\begin{bmatrix} \eta\\ \omega\end{bmatrix} \leq \underbrace{\begin{bmatrix} \tau_k \\ -\tau_k \end{bmatrix}}_{\mathbf{b}_k}
\end{align}
Consequently, we have a concrete construction of consistent set at each time $t$:
\begin{equation}\label{eq:roboconsist}
\st{P}(\cl{D}_t) = \bigg\{ \theta = \begin{bmatrix} \eta \\ w \end{bmatrix} \in\mathbb{R}^{m+n}\bigg| \mathbf{A}_t \begin{bmatrix} \eta \\ w \end{bmatrix} \leq \mathbf{b}_t \bigg\}     \cap \st{P}(\cl{D}_{t-1}),\quad \st{P}(\cl{D}_{0}) = \st{K}
\end{equation}
where $\mathbf{A}_k = \mathbf{A}_k(x_k,u_k)$ and $\mathbf{b}_k = \mathbf{b}_k(x_k,u_k)$ are matrix and vector of ``features'' constructed from current control policy and state at time $t$ via the known functional form of $\mathbf{Y}$. The data sets $\cl{D}_k$ are tuples of the form $ \cl{D}_k = (d_1,\dots, d_k),\: d_k = (q_k,\dot{q}_k, \ddot{q}_k, \tau_k).$

\paragraph{Designing a competitive chasing selection.}
The above consistent sets are simply an intersection of halfspaces, hence we are in the setting of \aspref{asp:convexuncertain} and we can instantiate competitive selections from the (NCBC) competitive greedy and Steiner point algorithm algorithms:
\begin{itemize}[nolistsep]
    \item \textbf{Greedy projection.} $\sel_p$ selects $\theta_t=\texttt{SEL}_p(\cl{D}_t)$ as the solution to the following convex optimization problem, which can be solved efficiently:
\begin{align*}
\theta_t= \underset{\theta\in\mathbb{R}^p \cap \st{K}}{\argmin}& \quad \frac{1}{2}\norm{\theta-\theta_{t-1}}^2,\\
\text{s.t:}&\quad  \mathbf{A}_i \theta \leq \mathbf{b}_i, \forall i = 1, \ldots, t.
\end{align*}
    \item \textbf{Steiner-Point.} Alternatively, $\texttt{SEL}_s$ outputs the Steiner point of the polyhedron $\st{P}_{\mathbb{T}}(\cl{D}_t)$, which in principle requires calculating an integral over multi-dimensional sphere. Fortunately, as shown in \cite{argue2020chasing}, the Steiner point can be approximated efficiently by solving randomized linear programs. (We take this approach in our empirical validation.)
\end{itemize}
\paragraph{Mistake guarantee for $\cl{A}_\pi(\sel_{p/s})$} 
Since $\pi$ is a robust oracle for $\bm{\cl{G}}$ and both $\sel_{p}$ and $\sel_s$ are $\gamma$-competitive CMC algorithms in $(\mathbb{T},\st{K},\|\anon\|)$  for some $\gamma>0$, our result \thmref{thm:red} tells us that $\cl{A}_\pi(\sel_p)$ and $\cl{A}_\pi(\sel_s)$ guarantees upfront finiteness of the total number of mistakes $\sum^{\infty}_{k=0} \cl{G}^\epsilon_k(x_k,\tau_k)$, which implies the desired tracking behavior guarantee $\limsup_{k\rightarrow \infty}\|x-x^d\| \leq \epsilon$. Moreover, if we can provide a bound $M$ on the mistake constant $M^\pi_\rho< M$, we obtain from \thmref{thm:red}, \ref{it:conv4} an explicit performance bound for the tracking performance in the form of the mistake guarantee 
$$ \sum\limits^{\infty}_{k=0} \cl{G}^\epsilon_k(x_k,\tau_k)\leq M(\tfrac{2\gamma}{\rho}\dm(\st{K})+1).$$

\begin{algorithm}[t]
    \caption{ design of $\cl{A}_\pi(\texttt{SEL}_{p/s})$ for $\epsilon$-trajectory tracking for fully actuated robots}
    \label{alg:Apiselrobotic}
    \begin{small}
 \begin{algorithmic}[1]
    \For{$t=0,T_s,\dots,kT_s$ {\bfseries to} $\infty$}
    \State measure $q_k,\dot{q}_k,\ddot{q}_k$
    \State update polyhedron $\st{P}(\cl{D}_k)$ as in \eqref{eq:roboconsist}
    \State select according to \eqref{eq:sela} or \eqref{eq:selb}\Comment{selection $\sel_p$ or $\sel_s$}
    \State choose $\tau_k = \mathbf{Y}'(q_k,\dot{q}_k,v_k,a_k)(\eta_k + u_k) - K_{\omega_k} r_k$ using \eqref{eq:spongdefs}, \eqref{eq:oracspong} \Comment{ use oracle $\pi(x_k;\theta_k)$}
    \EndFor
 \end{algorithmic}
 \end{small}
 \end{algorithm}

\section{Proofs of the main results}
\label{sec:app_results}
For convenience of the readers, we recap briefly the most important definitions and corollaries used in the proofs.
\subsection{Overview of main definitions and notational conventions} \label{sec:gendef}
\begin{notset*}
    $2^\st{S}$ denotes the set of all subsets of a set $\st{S}$.
   A \textit{compact parametrization} $(\mathbb{T},\st{K},d)$ of an uncertainty set $\cl{F}$ consists of the compact metric space $(\st{K},d)$ and mapping $\mathbb{T}:\st{K}\mapsto 2^\cl{F}$ such that: $\cl{F} \subset \bigcup_{\theta \in \st{K}} \mathbb{T}(\theta)$. An \textit{objective} $\bm{\cl{G}}$ is a sequence of $\{0,1\}$-valued functions $(\cl{G}_0,\cl{G}_1,\dots)$. $\mathbb{D}$ is the space of finite data sets, $\cl{C}$ the space of time-varying state-feedback policies and $d_H:2^\st{K}\times 2^\st{K}\mapsto \mathbb{R}^+$ is the Hausdorff metric: 
   \begin{align*}
    \mathbb{D} &:=\left\{(d_1,\dots,d_N)\:|\:d_i \in \mathbb{N}\times\cl{X}\times\cl{X}\times \cl{U},\: N<\infty \right\},\\
    \cl{C} &:= \left\{\mathbb{N}\times \cl{X} \mapsto \cl{U} \right\}\\
    d_H(\st{S},\st{S}') &:= \max\left\{\max\limits_{x\in\st{S}} d(x,\st{S}')\:,\: \max\limits_{y\in\st{S}'} d(y,\st{S})\right\}\:\:\text{ for sets }\st{S},\st{S}'\subset \st{K}
    \end{align*} 
    A data sequence $\bm{\cl{D}}=(\cl{D}_1,\cl{D}_2,\dots)$ is a data stream if the data sets $\cl{D}_t=(d_1,\dots,d_t)$ are formed from a sequence of observations $(d_1,d_2,\dots)$.
    Procedure $\pi$ is a map $\st{K}\mapsto \cl{C}$ which expects a parameter $\theta\in\st{K}$ as input and returns a policy $\pi[\theta]\in\cl{C}$. Procedure $\sel$ is a map $\mathbb{D}\mapsto \st{K}$ which takes as input a data set $\cl{D}\in\mathbb{D}$ and returns a parameter $\theta=\sel[\cl{D}]$. For a fixed choice of $\sel$ and $\pi$, we instantiate $\cl{A}_\pi(\sel)$ as the online algorithm described in Algorithm \ref{alg:ApiselOCO}. For a time-interval $\cl{I}=[t_1,t_2]\subset \mathbb{N}$, ($x_{\cl{I}}$, $u_{\cl{I}}$) represent the tuples $(x_{t_1},\dots,x_{t_2})$, $(u_{t_1},\dots,u_{t_2})$. For a fixed objective $\bm{\cl{G}}$, the set of admissable states at time $t$ are $\st{X}_{t}= \{x\:|\: \exists u': \:\cl{G}_{t}(x,u')=0\}$. 
  \end{notset*}
\begin{defn}\label{app-def:pi-rhorobust}
    For a time-interval $\cl{I}=[t_1,t_2]\subset \mathbb{N}$ and fixed $\theta^*\in\st{K}$, the set $\cl{S}_{\cl{I}}[\rho;\theta^*]$ is defined as
    \begin{align*}
        \cl{S}_{\cl{I}}[\rho;\theta^*]:=\left\{(x_{\cl{I}}, u_{\cl{I}})\:\left|
        \begin{array}{c}\: \exists f\in\mathbb{T}(\theta^*)\exists(\theta_{t_1},\dots,\theta_{t_2})\subset \st{B}_\rho(\theta^*):\forall k\in\cl{I}:\\
            u_k=\pi[\theta_k](k,x_k), \text{ and }
         x_{k+1}=f(k,x_k,u_k),\text{ if }k\neq t_2
         \end{array}\right.\right\}
    \end{align*}
\end{defn}
\begin{defn*}
    A function $f:X\mapsto Y$ is a selection of the set-valued map $F:X \mapsto 2^Y$, if $\forall x\in X:\: f(x) \in F(x)$.
\end{defn*}
\subsubsection{Robust oracles}

\begin{table}[h]
    \centering
        \begin{tabular}{| r | c |} 
            \hline
            {$\rho$-robust} & $\forall \theta\in\st{K}:\: \sup_{\gamma \geq 0} m^\pi_\rho(\gamma;\theta)<\infty$ \TBstrut \\
            \hline
            {uniformly $\rho$-robust} & ${M}^\pi_\rho:=\sup_{\gamma \geq 0, \theta \in \st{K}} {m}^\pi_\rho(\gamma;\theta)<\infty$  \TBstrut \\
            \hline 
             {locally $\rho$-robust} & $\forall \gamma \geq 0,\: \theta \in \st{K}:\: m^\pi_\rho(\gamma;\theta)<\infty$ \TBstrut \\ 
             \hline
             {locally uniformly $\rho$-robust} & $\forall \gamma \geq 0:\:M^\pi_\rho(\gamma):= \sup_{\theta\in\st{K}} m^\pi_\rho(\gamma;\theta)<\infty$ \TBstrut \\
             \hline
        \end{tabular}
     \caption{Different notions of robustness of an oracle $\pi$ }
     \label{tab:robustness2}
\end{table}

\begin{defn}\label{app-def:pi-rhorobust}
    Equip $\cl{X}$ with some norm $\|\anon\|$ and consider a fix parameterization $(\mathbb{T},\st{K},d)$ for $\cl{F}$.
    Define the quantity $m^\pi_\rho(\gamma;\theta)$ for each $\gamma\geq 0$, $\rho\geq 0$ and $\theta\in\st{K}$ as
    $$m^\pi_\rho(\gamma;\theta) := \sup\limits_{\cl{I}=[t,t']\::\:t<t'}\:\:\sup\limits_{(x_\cl{I},u_\cl{I})\in\cl{S}_{\cl{I}}[\rho;\theta], \|x_t\| \leq \gamma}\sum\limits_{t\in\cl{I}}\cl{G}_t(x_t,u_t) $$
    If $m^\pi_0(\gamma;\theta)$ is finite everywhere, we call $\pi$ an \textit{oracle} for $\bm{\cl{G}}$ and call it (locally) (uniformally) $\rho$-robust, if the corresponding property presented in Table \ref{tab:robustness2} is satisfied. If it exists, ${M}^\pi_\rho$ will be called the \textit{mistake constant} or \textit{mistake function} (for local properties) of $\pi$.
\end{defn}
\begin{coro*}[Def.\ref{app-def:pi-rhorobust}]
    Let procedure $\pi$ be a uniformly $\rho$-robust oracle for $\bm{\cl{G}}$ in $\parttupl$. Let $\bm{x},\bm{u}$ be some trajectory and denote its restriction to some time-interval $\cl{I}$ as $(x_{\cl{I}},u_{\cl{I}})$. 
    If $(x_{\cl{I}},u_{\cl{I}})\in\bigcup_{\theta\in\st{K}}\cl{S}_{\cl{I}}[\rho;\theta]$, then $\sum_{t\in\cl{I}} \cl{G}_t(x_t,u_t) \leq M^\pi_\rho $. If instead, $\pi$ is locally uniformly $\rho$-robust, the previous inequality changes to $\sum_{t\in\cl{I}} \cl{G}_t(x_t,u_t) \leq M^\pi_\rho(\|x_{t_0}\|),\:t_0:=\min(\cl{I})$.
\end{coro*}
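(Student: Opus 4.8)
The plan is to prove this corollary as a straightforward unwinding of the definition of the mistake constant in Definition~\ref{app-def:pi-rhorobust}, with essentially no new ideas required. First I would use the hypothesis $(x_{\cl{I}},u_{\cl{I}})\in\bigcup_{\theta\in\st{K}}\cl{S}_{\cl{I}}[\rho;\theta]$ to fix a witness $\theta_0\in\st{K}$ with $(x_{\cl{I}},u_{\cl{I}})\in\cl{S}_{\cl{I}}[\rho;\theta_0]$; note this already forces $\mathbb{T}(\theta_0)\neq\emptyset$. Write $t_0:=\min(\cl{I})$ and $\gamma_0:=\|x_{t_0}\|$. By construction $(x_{\cl{I}},u_{\cl{I}})$ is a feasible nominal trajectory on $\cl{I}$ for the parameter $\theta_0$ whose starting state satisfies $\|x_{t_0}\|\leq\gamma_0$, so it is an admissible competitor in the inner supremum defining $m^\pi_\rho(\gamma_0;\theta_0)$, and hence $\sum_{t\in\cl{I}}\cl{G}_t(x_t,u_t)\leq m^\pi_\rho(\gamma_0;\theta_0)$.

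Next I would read off the two conclusions from the two robustness notions in Table~\ref{tab:robustness2}. If $\pi$ is uniformly $\rho$-robust, then $M^\pi_\rho=\sup_{\gamma\geq 0,\,\theta\in\st{K}}m^\pi_\rho(\gamma;\theta)<\infty$, so in particular $m^\pi_\rho(\gamma_0;\theta_0)\leq M^\pi_\rho$, which gives $\sum_{t\in\cl{I}}\cl{G}_t(x_t,u_t)\leq M^\pi_\rho$. If instead $\pi$ is only locally uniformly $\rho$-robust, then for the specific value $\gamma_0=\|x_{t_0}\|$ the quantity $M^\pi_\rho(\gamma_0)=\sup_{\theta\in\st{K}}m^\pi_\rho(\gamma_0;\theta)$ is finite, and the same chain gives $\sum_{t\in\cl{I}}\cl{G}_t(x_t,u_t)\leq m^\pi_\rho(\gamma_0;\theta_0)\leq M^\pi_\rho(\|x_{t_0}\|)$.

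The one bookkeeping subtlety I would address is that the outer supremum defining $m^\pi_\rho$ ranges only over nondegenerate intervals $\cl{I}=[t,t']$ with $t<t'$, so the case $\cl{I}=\{t\}$ needs one line: picking some $f\in\mathbb{T}(\theta_0)$ and setting $x_{t+1}:=f(t,x_t,u_t)$, $u_{t+1}:=\pi[\theta_0](t+1,x_{t+1})$ yields a two-step trajectory in $\cl{S}_{[t,t+1]}[\rho;\theta_0]$ (using that $\theta_0$ is trivially within $\rho$ of itself) whose total cost dominates $\cl{G}_t(x_t,u_t)$ and is bounded as above. Since the whole argument is a definition chase, I do not anticipate a genuine obstacle; the only thing to get right is matching which supremum — over $\gamma$, over $\theta$, over $\cl{I}$ — absorbs $\gamma_0$, $\theta_0$, and $\cl{I}$ respectively.
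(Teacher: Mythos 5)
Your proof is correct and is exactly the definition-unwinding the paper intends: the paper states this corollary without proof, treating it as immediate from Definition~\ref{app-def:pi-rhorobust} and Table~\ref{tab:robustness2}, and your argument (fix a witness $\theta_0$, note the trajectory is an admissible competitor in the supremum defining $m^\pi_\rho(\|x_{t_0}\|;\theta_0)$, then absorb into the appropriate outer supremum) is precisely that reasoning made explicit. Your handling of the degenerate singleton interval $\cl{I}=\{t\}$ is a detail the paper glosses over, and your two-step extension resolves it correctly.
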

\begin{defn}\label{app-def:pi-invariant}
  For an objective $\bm{\cl{G}}$, we call a $\rho$-robust oracle $\pi$ \textit{cost invariant}, if for all ${\theta}\in\st{K}$ and $t\geq 0$ the following holds 
    \begin{itemize}[nosep]
        \item For all $x\in\st{X}_{t}$ holds $\cl{G}_t(x,\pi[{\theta}](t,x))=0$.   
         \item For all $x\in \st{X}_{t}$, $f\in\mathbb{T}[\theta]$ and $\theta' $ s.t. $d(\theta',\theta)\leq \rho$, holds $f(t,x,\pi[{\theta'}](t,x))\in\st{X}_{t+1}$, 
    \end{itemize}
\end{defn}
\begin{rem}
    The above condition is related to the well-known notion of \textit{positive set/tube invariance} in control theory \cite{blanchini1999set}:
    The above condition requires that the oracle policies $\pi[\theta]$ can ensure for their nominal model $\mathbb{T}[\theta]$ the following closed loop condition: $x_t\in\st{X}_t \implies x_{t+1}\in\st{X}_{t+1}$, $\forall t$. 
\end{rem}
\begin{coro*}[Def. \ref{app-def:pi-invariant}]
    Assume that $\pi$ is cost invariant as in Def.\ref{app-def:pi-invariant} and that $\pi$ is uniformly $\rho$-robust. Let $\bm{x},\bm{u}$ be some trajectory and denote its restriction to some time-interval $\cl{I}=[t_1,t_2]$ as $(x_{\cl{I}},u_{\cl{I}})$. If $(x_{\cl{I}},u_{\cl{I}})\in\bigcup_{\theta\in\st{K}}\cl{S}_{\cl{I}}[\rho;\theta]$, then the following holds:
    \begin{enumerate}[label=(\roman*),nosep]
        \item \label{it:coroinv1} If $s\in\cl{I}$ s.t. $\cl{G}_s(x_s,u_s)=0$ and $s<t_2$, then $\cl{G}_{s+1}(x_{s+1},u_{s+1})=0$.
        \item \label{it:coroinv2} For all $s$ in the range $t_1 + M^\pi_\rho \leq s\leq t_2 $ holds $\cl{G}_s(x_s,u_s) =0$.
        \item \label{it:coroinv3} If $\cl{G}_{t_1}(x_{t_1},u_{t_1})=0$, then $\sum_{t\in\cl{I}} \cl{G}_{t}(x_t,u_t) =0$
        \item \label{it:coroinv4} If $\cl{G}_{t_2}(x_{t_2},u_{t_2}) = 1$, then $|\cl{I}| \leq M^\pi_\rho$.
    \end{enumerate}
    \end{coro*}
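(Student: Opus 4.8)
The plan is to fix once and for all a witness parameter. Since $(x_{\cl{I}},u_{\cl{I}})\in\bigcup_{\theta\in\st{K}}\cl{S}_{\cl{I}}[\rho;\theta]$, there is some $\theta^*\in\st{K}$ with $(x_{\cl{I}},u_{\cl{I}})\in\cl{S}_{\cl{I}}[\rho;\theta^*]$; unpacking Definition~\ref{app-def:pi-rhorobust}, this supplies a single $f\in\mathbb{T}(\theta^*)$ and a parameter sequence $(\theta_{t_1},\dots,\theta_{t_2})$ with $d(\theta_k,\theta^*)\le\rho$, such that $u_k=\pi[\theta_k](k,x_k)$ for all $k\in\cl{I}$ and $x_{k+1}=f(k,x_k,u_k)$ for all $k\in\cl{I}$ with $k<t_2$. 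Everything below is stated relative to this fixed $f$ and $(\theta_k)$.

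The core step is (i). Suppose $s\in\cl{I}$, $s<t_2$, and $\cl{G}_s(x_s,u_s)=0$. Then $x_s\in\st{X}_s$ directly from the definition of the admissible set. Applying the second bullet of Definition~\ref{app-def:pi-invariant} with $x=x_s$, $\theta=\theta^*$, $\theta'=\theta_s$ (legitimate since $d(\theta_s,\theta^*)\le\rho$ and $f\in\mathbb{T}[\theta^*]$) gives $x_{s+1}=f(s,x_s,\pi[\theta_s](s,x_s))\in\st{X}_{s+1}$; then the first bullet with $\theta=\theta_{s+1}$ gives $\cl{G}_{s+1}(x_{s+1},\pi[\theta_{s+1}](s+1,x_{s+1}))=0$, and since $s+1\in\cl{I}$ this is exactly $\cl{G}_{s+1}(x_{s+1},u_{s+1})=0$. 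Thus a zero at any step of $\cl{I}\setminus\{t_2\}$ propagates forward.

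Parts (iii) and (iv) follow quickly. For (iii), if $\cl{G}_{t_1}(x_{t_1},u_{t_1})=0$, iterate (i) from $t_1$ up to $t_2-1$ to conclude $\cl{G}_t(x_t,u_t)=0$ for every $t\in\cl{I}$, so the sum vanishes. For (iv), apply the contrapositive of (iii) to the sub-interval $[t,t_2]\subset\cl{I}$ for each $t\in\cl{I}$ — the witness $f,(\theta_k)$ restricted to $[t,t_2]$ shows $(x_{[t,t_2]},u_{[t,t_2]})\in\cl{S}_{[t,t_2]}[\rho;\theta^*]$ — so $\cl{G}_{t_2}(x_{t_2},u_{t_2})=1$ forces $\cl{G}_t(x_t,u_t)=1$ for every $t\in\cl{I}$; hence $|\cl{I}|=\sum_{t\in\cl{I}}\cl{G}_t(x_t,u_t)$, and the mistake bound $\sum_{t\in\cl{I}}\cl{G}_t(x_t,u_t)\le M^\pi_\rho$ from the corollary to Definition~\ref{app-def:pi-rhorobust} (which applies since $\pi$ is uniformly $\rho$-robust and $(x_{\cl{I}},u_{\cl{I}})\in\bigcup_{\theta\in\st{K}}\cl{S}_{\cl{I}}[\rho;\theta]$) gives $|\cl{I}|\le M^\pi_\rho$. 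Part (ii) is a localized version of this: fix $s$ with $t_1+M^\pi_\rho\le s\le t_2$ and suppose $\cl{G}_s(x_s,u_s)=1$; by the same backward-propagation argument on $[t_1,s]$ we get $\cl{G}_t(x_t,u_t)=1$ for all $t\in[t_1,s]$, i.e.\ $s-t_1+1$ mistakes on $[t_1,s]$, while the mistake-bound corollary applied to $(x_{[t_1,s]},u_{[t_1,s]})\in\cl{S}_{[t_1,s]}[\rho;\theta^*]$ caps this count at $M^\pi_\rho$; since $M^\pi_\rho$ is a finite supremum of integers and $s-t_1+1\ge M^\pi_\rho+1$, this is a contradiction, so $\cl{G}_s(x_s,u_s)=0$.

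The only thing to watch is endpoint bookkeeping: (i) is available only for indices strictly below $t_2$, so the backward arguments in (ii) and (iv) must be phrased as the contrapositive of (i) on the appropriate sub-interval rather than an application of (i) at $t_2$ itself; and one must keep $M^\pi_\rho\in\mathbb{N}$ in mind so that $s-t_1+1>M^\pi_\rho$ and $s\ge t_1+M^\pi_\rho$ coincide. I do not anticipate any genuine obstacle beyond this routine care.
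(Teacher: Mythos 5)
Your proof is correct and follows essentially the same route as the paper: (i) via the two bullets of the cost-invariance definition, (iii) by forward iteration of (i), and (ii) and (iv) by combining the contrapositive propagation with the mistake bound $\sum_{t\in\cl{I}}\cl{G}_t(x_t,u_t)\le M^\pi_\rho$ from uniform $\rho$-robustness. The only cosmetic difference is that the paper obtains (iv) as a direct contraposition of (ii), whereas you re-run the backward-propagation argument; the content is the same.
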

    \begin{proof}
        {\ref{it:coroinv1}}:   $\cl{G}_s(x_s,u_s)=0$, implies $x_s\in\st{X}_s$. The condition $(x_{\cl{I}}, u_{\cl{I}})\in\cl{S}_\cl{I}[\rho;\theta^*]$ means that $x_{s+1}=f(s,x_s,\kappa(x_s))$, for some $f\in\mathbb{T}[\theta^*]$ and some policy $\pi[\theta_s]$, with $d(\theta_s,\theta^*)\leq \rho$. Due to the cost-invariant property, we conclude $x_{s+1}\in\st{X}_{s+1}$ and since we pick $u_{s+1}=\pi[\theta_s](s+1,x_{s+1})$ we are also guaranteed that $\cl{G}_{s+1}(x_{s+1},u_{s+1})=0$.\\
        \noindent {\ref{it:coroinv2}}: If there exists an $s$ s.t. $t_1 + M^\pi_\rho \leq s\leq t_2 $ then the interval size is bounded below as $|\cl{I}|> M^\pi_\rho$. The claim follows by using property \ref{it:coroinv1} and that $\sum_{t \in \cl{I}} \cl{G}_t(x_t,u_t) \leq M^\pi_\rho$.\\
        \noindent {\ref{it:coroinv3}}: Follows from \ref{it:coroinv1}. \ref{it:coroinv4}: by contraposition of \ref{it:coroinv2} follows $t_2<t_1+M^\pi_\rho$ and $|\cl{I}|=t_2-t_1+1\leq M^\pi_\rho$.
    \end{proof}


\subsubsection{Consistent sets}
\begin{defn}\label{app-def:Pt}
    Given a compact parameterization $(\mathbb{T},\st{K},d)$ of the uncertainty set $\cl{F}$ and a data set $\cl{D} =(d_1,\dots, d_N)$, $d_k=(t_k,x^+_k,x_k,u_k)$ define the consistent set map $\st{P}:\mathbb{D}\mapsto 2^\st{K}$ as
    \begin{align}\label{eq:app_poly_0}
        \st{P}(\cl{D}) :=\mathrm{closure}\big( \bigcap\limits_{(t,x^+,x,u) \in \cl{D}}\left\{\theta \in\st{K} \ \left| \:\exists f\in \mathbb{T}(\theta)\text{ s.t. : }\begin{array}{l} x^+ = f(t,x,u) \end{array}  \right. \right\}\big).
    \end{align}
\end{defn}
\begin{coro*}[Def. \ref{app-def:Pt}]
    Assume $\bm{\cl{D}}$ is a data stream with at least one consistent $f\in \cl{F}$
    Then, the following holds for the sequence of consistent sets $\st{P}(\bm{\cl{D}}) = (\st{P}(\cl{D}_1),\st{P}(\cl{D}_2),\dots)$:
    \begin{enumerate}[label=(\roman*),nosep]
        \item \label{it:nest} The sequence of consistent sets is nested in $\st{K}$, i.e:  $\st{P}(\cl{D}_t)\supset \st{P}(\cl{D}_{t+1})$ 
        \item \label{it:Pinf} $\st{P}(\cl{D}_\infty):=  \left(\bigcap^{\infty}_{k=1}\st{P}(\cl{D}_k)\right) \cap \st{K}$ is non-empty 
        \item \label{it:Pconv} If $\theta_t\in\st{P}(\cl{D}_t)$, and $\lim_{t \rightarrow \infty} \theta_t = \theta_\infty$, then $\theta_\infty\in\st{P}(\cl{D}_\infty)$.
    \end{enumerate}
\end{coro*}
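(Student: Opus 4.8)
The plan is to establish (i)--(iii) in sequence, each reducing to an elementary monotonicity or topological fact about the consistent-set map $\st{P}$ of \defref{app-def:Pt}.

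First I would prove the nestedness (i). Since $\bm{\cl{D}}$ is a data stream, $\cl{D}_t$ is obtained from $\cl{D}_{t+1}$ by deleting the last observation, so as collections of data points $\cl{D}_t\subseteq\cl{D}_{t+1}$. The set inside the $\mathrm{closure}(\cdot)$ in the definition of $\st{P}$ is an intersection indexed by the data points of $\cl{D}$; enlarging the index set only shrinks the intersection, and $\mathrm{closure}(\cdot)$ is monotone under inclusion. Hence $\st{P}(\cl{D}_{t+1})\subseteq\st{P}(\cl{D}_t)$, which is (i).

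Next I would prove non-emptiness (ii) by exhibiting a point. By hypothesis there is $f^*\in\cl{F}$ consistent with the stream $\bm{\cl{D}}$, hence with every $\cl{D}_t$. By \defref{def:compactparameter}, $\cl{F}\subseteq\bigcup_{\theta\in\st{K}}\mathbb{T}[\theta]$, so we may pick $\theta^*\in\st{K}$ with $f^*\in\mathbb{T}[\theta^*]$. For every data point $(t,x^+,x,u)\in\cl{D}_k$ we have $x^+=f^*(t,x,u)$ with $f^*\in\mathbb{T}[\theta^*]$, so $\theta^*$ lies in each set of the intersection defining $\st{P}(\cl{D}_k)$, hence in that intersection and in its closure. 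As this holds for all $k$ and $\theta^*\in\st{K}$, we get $\theta^*\in\bigl(\bigcap_{k\ge1}\st{P}(\cl{D}_k)\bigr)\cap\st{K}=\st{P}(\cl{D}_\infty)$, so $\st{P}(\cl{D}_\infty)$ is non-empty.

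Finally, for the closedness-under-limits statement (iii): fix an arbitrary $k$; by (i), $\theta_t\in\st{P}(\cl{D}_t)\subseteq\st{P}(\cl{D}_k)$ for every $t\ge k$, so the tail $(\theta_t)_{t\ge k}$ lies in $\st{P}(\cl{D}_k)$, and since $\st{P}(\cl{D}_k)$ is the closure of a subset of the metric space $(\st{K},d)$ it is closed and therefore contains $\theta_\infty=\lim_t\theta_t$. As $k$ was arbitrary, and $\st{K}$ is itself closed (being compact) and contains every $\theta_t$, this gives $\theta_\infty\in\bigl(\bigcap_{k\ge1}\st{P}(\cl{D}_k)\bigr)\cap\st{K}=\st{P}(\cl{D}_\infty)$. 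The argument throughout is essentially bookkeeping, so there is no real obstacle; the only point needing a moment's care is that the $\mathrm{closure}$ operator in the definition of $\st{P}$ neither breaks the monotonicity used in (i) (closure is monotone) nor is superfluous --- it is exactly what makes each $\st{P}(\cl{D}_k)$ closed, which is precisely what step (iii) relies on. Compactness of $\st{K}$ is not strictly needed for the literal statements (closedness suffices at every step), but it is what would also yield nonemptiness of $\st{P}(\cl{D}_\infty)$ via the finite-intersection property even absent an explicitly known consistent $f^*$.
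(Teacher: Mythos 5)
Your proposal is correct, and parts (i) and (iii) are essentially identical to the paper's argument (monotonicity of the intersection under enlarging the data set, and the tail-in-a-closed-set argument for the limit). The one genuine difference is in part (ii): the paper invokes Cantor's intersection theorem --- each $\st{P}(\cl{D}_t)$ is a non-empty compact set, the sequence is nested, hence the intersection is non-empty --- whereas you exhibit an explicit witness, namely a $\theta^*\in\st{K}$ with $f^*\in\mathbb{T}[\theta^*]$ for the assumed consistent $f^*$, and observe that this single $\theta^*$ lies in every $\st{P}(\cl{D}_k)$ simultaneously. Your route is the more economical one here: under the stated hypothesis (a consistent $f^*$ exists for the whole stream), the same witness that the paper implicitly needs to show each $\st{P}(\cl{D}_t)$ is non-empty already certifies non-emptiness of the infinite intersection, so the compactness machinery is not doing any work; your closing remark correctly identifies that compactness only becomes essential if one wants non-emptiness of the limit set from non-emptiness of each finite-stage set without a globally consistent model in hand. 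Both arguments are valid; the paper's is marginally more robust to weakening the hypothesis, yours is more self-contained.
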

\begin{proof}
    The first property is clear since $\cl{D}_t = \cl{D}_{t-1}\cup\{d_t\}$. For the second, notice that $\st{P}(\cl{D}_t)$ is always a non-empty compact set and recall the basic real analysis fact \cite{rudin1976principles}: The intersection of a nested sequence of non-empty compact sets is always non-empty \cite{rudin1976principles}.
     To verify property \ref{it:Pconv}, notice that nestedness implies that the sub-sequence $(\theta_T,\theta_{T+1},\dots)$ is contained in $\st{P}(\cl{D}_T)$; Since $\st{P}(\cl{D}_T)$ is closed, we have that $\theta_\infty \in \st{P}(\cl{D}_T)$. We chose $T$ arbitrary, so we conclude that $\theta_{\infty}\in\st{P}(\cl{D}_T)$ for all $T\geq 0$, i.e.: $\theta_\infty \in \st{P}(\cl{D}_\infty)$.
\end{proof}
\subsubsection{Consistent model chasing conditions}
\begin{defn}\label{def:Selprops-app}
    Let $\sel:\mathbb{D}\mapsto \st{K}$ be a selection of $\st{P}$. Let 
    $\bm{\cl{D}}=(\cl{D}_1,\cl{D}_2,\dots)$ be an online data stream and let $\bm{\theta}$ be a sequence defined for each time $t$ as $\theta_t=\sel[\cl{D}_t]$. Assume that there always exists an $f\in\cl{F}$ consistent with $\bm{\cl{D}}$ and consider the following statements:
    \begin{enumerate}[label=(\Alph*),nosep]
        \item $\theta^*=\lim_{t \rightarrow \infty}\theta_t$ exists.
        \item $\lim_{t \rightarrow \infty}d(\theta_t,\theta_{t-1})=0.$
        \item $\gamma$-\textit{competitive}:
        $ \sum^{t_2}_{t=t_1+1} d(\theta_t,\theta_{t-1}) \leq \gamma \:d_H(\st{P}(\cl{D}_{t_2}),\st{P}(\cl{D}_{t_1}))$ holds for any $t_1<t_2$.
        \item $(\gamma,T)$-\textit{weakly competitive}: $\sum^{t_2}_{t=t_1+1} d(\theta_t,\theta_{t-1}) \leq \gamma \:d_H(\st{P}(\cl{D}_{t_2}),\st{P}(\cl{D}_{t_1}))$ holds for any $t_2-t_1 \leq T$.
    \end{enumerate}
    We will say that $\sel$ is a \textit{consistent model chasing} (CMC) algorithm, if one of the above \textit{chasing} properties can be guaranteed for any pair of $\bm{\cl{D}}$ and corresponding $\bm{\theta}$. 
\end{defn}
\begin{coro}\label{coro:sel}  
    Then following implications hold between the properties of \defref{def:Selprops}:
        \begin{align*}
            \begin{array}{ccc}
                \mathrm{(C)} &\Rightarrow& \mathrm{(A)} \\
                \rotatebox[origin=c]{270}{$\Rightarrow$}& &\rotatebox[origin=c]{270}{$\Rightarrow$}\\
                \mathrm{(D)} &\Rightarrow& \mathrm{(B)} 
            \end{array}
        \end{align*}
        The reverse (and any other) implications between the properties do not hold in general.
\end{coro}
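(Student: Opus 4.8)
The plan is to dispatch the four implications shown in the diagram by short elementary arguments and then to rule out every other implication by explicit examples; the four implications are all routine, and the real content of the statement is the non-implications, of which only the separation of (D) from (A) (and hence from (C)) requires any care.

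For the implications: (C)$\Rightarrow$(D) is immediate, since an inequality valid for all $t_1<t_2$ holds in particular whenever $t_2-t_1\le T$. (A)$\Rightarrow$(B) is the triangle inequality, $d(\theta_t,\theta_{t-1})\le d(\theta_t,\theta^*)+d(\theta^*,\theta_{t-1})\to 0$. For (D)$\Rightarrow$(B), specialising weak competitiveness to the window $[t-1,t]$ (of length $1\le T$) gives $d(\theta_t,\theta_{t-1})\le\gamma\,d_H(\st{P}(\cl{D}_t),\st{P}(\cl{D}_{t-1}))$, so it suffices to prove $d_H(\st{P}(\cl{D}_t),\st{P}(\cl{D}_{t-1}))\to 0$. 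The consistent sets form a nested decreasing sequence of non-empty compact sets with non-empty intersection $\st{P}(\cl{D}_\infty)$ (the corollary following Definition~\ref{app-def:Pt}); for any such sequence $d_H(\st{P}(\cl{D}_t),\st{P}(\cl{D}_\infty))\to 0$, for otherwise a subsequence of points $y_{t_k}\in\st{P}(\cl{D}_{t_k})$ at distance $\ge\varepsilon$ from $\st{P}(\cl{D}_\infty)$ would, by compactness of $\st{P}(\cl{D}_0)$, accumulate at a point of $\bigcap_m\st{P}(\cl{D}_m)=\st{P}(\cl{D}_\infty)$, a contradiction; then, using $\st{P}(\cl{D}_\infty)\subseteq\st{P}(\cl{D}_t)\subseteq\st{P}(\cl{D}_{t-1})$, also $d_H(\st{P}(\cl{D}_{t-1}),\st{P}(\cl{D}_t))\le d_H(\st{P}(\cl{D}_{t-1}),\st{P}(\cl{D}_\infty))\to 0$. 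For (C)$\Rightarrow$(A): fixing the first index and letting $t_2\to\infty$ in competitiveness, together with $d_H\le\dm(\st{K})$, yields $\sum_t d(\theta_t,\theta_{t-1})\le\gamma\,\dm(\st{K})<\infty$; a sequence with summable successive distances is Cauchy, and $(\st{K},d)$ is a complete (compact) metric space, so $\theta_t$ converges. The remaining arrow (C)$\Rightarrow$(B) then follows by composition.

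For the non-implications it suffices, for each missing arrow, to produce a single data stream and a sequence $\theta_t\in\st{P}(\cl{D}_t)$ with the stated properties (let $\sel$ agree with it on this stream and be an arbitrary selection of $\st{P}$ elsewhere). Using the degenerate parametrization with $\st{K}=[0,1]$ in which every parameter explains every datum, so $\st{P}(\cl{D}_t)=[0,1]$ for all $t$: the sequence $\theta_0=0$, $\theta_t=1/t$ converges, so (A) and (B) hold, while $d(\theta_1,\theta_0)=1>\gamma\cdot 0=\gamma\,d_H(\st{P}(\cl{D}_1),\st{P}(\cl{D}_0))$ for every $\gamma$, so neither (C) nor (D) holds --- this rules out (A)$\Rightarrow$(C), (A)$\Rightarrow$(D), (B)$\Rightarrow$(C) and (B)$\Rightarrow$(D). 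With the same constant consistent sets but $\theta_t$ chosen to sweep back and forth across $[0,1]$, the $n$-th sweep consisting of $n$ equal steps of length $1/n$, property (B) holds but $\theta_t$ does not converge, ruling out (B)$\Rightarrow$(A).

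The one delicate case is separating (D) from (A), which (via (C)$\Rightarrow$(A)) also separates (D) from (C). Here constant consistent sets are useless, and sets with summable successive Hausdorff distance are useless as well (by the argument above they force convergence), so a genuinely two-dimensional construction is needed. I would take $\st{K}$ a bounded region of $\mathbb{R}^2$ and let $\st{P}(\cl{D}_t)$ be $\st{K}$ with the interiors of pairwise-disjoint closed balls $B_1,\dots,B_t$ deleted --- so that $d_H(\st{P}(\cl{D}_{t-1}),\st{P}(\cl{D}_t))$ equals the radius $\varepsilon_t$ of $B_t$ --- with radii chosen so that $\sum_t\varepsilon_t=\infty$ while $\sum_t\varepsilon_t^2<\infty$ (so the balls fit disjointly after rescaling), and the balls organised into successive ``sweeps'' whose centres march along a short arc from a neighbourhood of a fixed point $P$ to a neighbourhood of a fixed point $Q$ and back, consecutive sweeps joined at their common endpoint. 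Then the selection $\sel$ returning at time $t$ the centre of the next ball $B_{t+1}$ is a valid selection of $\st{P}$ whose per-step motion is $O(\varepsilon_t)$, hence $(\gamma,1)$-weakly competitive --- property (D) --- yet its iterates revisit both $P$ and $Q$ infinitely often, so they do not converge and (A) fails; (C) fails too, since the path length over $[0,N]$ grows like $\sum_{t\le N}\varepsilon_t\to\infty$ whereas $d_H(\st{P}(\cl{D}_0),\st{P}(\cl{D}_N))\le\dm(\st{K})$. The main obstacle is purely the packing bookkeeping: laying out the sweeps in thin disjoint sub-strips around the segment $PQ$ so that all the balls are pairwise disjoint while consecutive centres --- including those bridging two consecutive sweeps --- stay within $O(\varepsilon_t)$ of each other; once this is arranged, every property checked above is a routine estimate.
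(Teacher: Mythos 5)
Your proposal is correct, and on the four positive implications it essentially matches the paper except at one point: for $\mathrm{(D)}\Rightarrow\mathrm{(B)}$ the paper argues via Lemma \ref{lem:compactpack}, showing that for each $\ep>0$ the time-steps with $d(\theta_t,\theta_{t-1})\geq\ep$ index an $\tfrac{\ep}{\gamma}$-separated family of nested consistent sets and hence number at most $N(\st{K},\tfrac{\ep}{\gamma})$, whereas you instead prove that a nested sequence of non-empty compact sets converges in Hausdorff distance to its intersection and conclude $d_H(\st{P}(\cl{D}_{t-1}),\st{P}(\cl{D}_t))\to 0$ directly. Both arguments are valid; the paper's is quantitative (a bound on the number of $\ep$-large steps, which is the same separated-set machinery reused in the proof of Theorem \ref{thm:redweak}), while yours is shorter and purely topological but yields no rate. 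Your treatments of $\mathrm{(A)}\Rightarrow\mathrm{(B)}$, $\mathrm{(C)}\Rightarrow\mathrm{(D)}$ and $\mathrm{(C)}\Rightarrow\mathrm{(A)}$ (summable increments, Cauchy, compactness) coincide with the paper's. Where you go genuinely beyond the paper is the second half of the statement: the paper asserts that no other implications hold but gives no counterexamples, whereas you construct them, including the only delicate one, a two-dimensional nested sequence with $\sum_t\ep_t=\infty$ and $\sum_t\ep_t^2<\infty$ separating $\mathrm{(D)}$ from $\mathrm{(A)}$ and $\mathrm{(C)}$. That construction is sound modulo the packing bookkeeping you flag; the one small point worth making explicit is realizability, i.e.\ exhibiting a compact parametrization and data stream whose consistent sets per Definition \ref{def:Pt} are exactly the prescribed ball complements (easily done by letting consistency of the $t$-th datum with $\theta$ encode $\theta\notin\mathrm{int}(B_t)$), since the chasing properties are defined relative to $\st{P}$ of an actual data stream rather than an arbitrary nested family.
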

\begin{proof}
(A) $\implies$ (B), (C) $\implies$ (D) are obvious. (C) $\implies$ (A) follows by noticing that (C) implies $\sum^{\infty}_{t=1}d(\theta_t,\theta_{t-1}) \leq \gamma \dm (\st{K})$. To prove (D) $\implies$ (B), we use \lemref{lem:compactpack}. First notice that $(\gamma,T)$-weak competitiveness (w.c.) implies $(\gamma,1)$-w.c.. Pick some $\ep>0$ and let $\mathbb{I}=\{t_1,\dots,\}$ be all time-steps at which $d(\theta_t,\theta_{t-1}) \geq \ep$. Now, for each $t\in\cl{I}$ holds $d_H(\st{P}(\cl{D}_{t}),\st{P}(\cl{D}_{t-1})) \geq \tfrac{\ep}{\gamma}$. We can verify that the collection of sets $\{\st{P}(\cl{D}_{t-1})\:|\:t\in\mathbb{I}\}$ is a $\tfrac{\ep}{\gamma}$-separated set in the metric space $(2^\st{K},d_H)$, hence by \lemref{lem:compactpack} it follows that the index set $\mathbb{I}$ is finite, i.e.: $|\mathbb{I}| \leq N(\st{K}, \tfrac{\ep}{\gamma})$. Since this holds for all $\ep>0$, we proved $\forall \ep>0\exists N \text{ s.t. }\forall t \geq N:\: d(\theta_t,\theta_{t-1})<\ep$, i.e.: $\lim_{t \rightarrow \infty} d(\theta_{t},\theta_{t-1})=0$. 
\end{proof}
\begin{coro}\label{coro:comp}
$(\gamma,T)$-weakly competitiveness implies $(k\gamma,kT)$-weakly competitiveness for any $k\in\mathbb{N}$.
\end{coro}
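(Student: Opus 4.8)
The plan is to deduce the claim from the base case by a simple interval-splitting argument combined with a monotonicity property of the Hausdorff distance on the nested family $\{\st{P}(\cl{D}_t)\}$. First I would fix $t_1\le t_2$ with $t_2-t_1\le kT$ and set $s_i:=\min\{t_1+iT,\,t_2\}$ for $i=0,\dots,k$, so that $t_1=s_0\le s_1\le\dots\le s_k=t_2$ and $s_i-s_{i-1}\le T$ for every $i$. Applying the $(\gamma,T)$-weak competitiveness hypothesis on each block $[s_{i-1},s_i]$ and summing the (telescoping) left-hand sides gives
$$\sum_{t=t_1+1}^{t_2} d(\theta_t,\theta_{t-1})=\sum_{i=1}^{k}\ \sum_{t=s_{i-1}+1}^{s_i} d(\theta_t,\theta_{t-1})\ \le\ \gamma\sum_{i=1}^{k} d_H\big(\st{P}(\cl{D}_{s_i}),\st{P}(\cl{D}_{s_{i-1}})\big),$$
where, if some $s_{i-1}=s_i$, the corresponding inner sum is empty and $d_H(\st{P}(\cl{D}_{s_i}),\st{P}(\cl{D}_{s_{i-1}}))=0$, so such indices contribute nothing.

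Next I would bound each Hausdorff term by $d_H(\st{P}(\cl{D}_{t_2}),\st{P}(\cl{D}_{t_1}))$. Since $\cl{D}_{t-1}$ is a subset of $\cl{D}_t$, the consistent sets are nested (cf.\ \defref{app-def:Pt}), so $\st{P}(\cl{D}_{t_1})\supseteq\st{P}(\cl{D}_{s_{i-1}})\supseteq\st{P}(\cl{D}_{s_i})\supseteq\st{P}(\cl{D}_{t_2})$. For nonempty compact sets with $B\subseteq A$ one has $d_H(A,B)=\max_{x\in A} d(x,B)$; hence
$$d_H\big(\st{P}(\cl{D}_{s_{i-1}}),\st{P}(\cl{D}_{s_i})\big)=\max_{x\in\st{P}(\cl{D}_{s_{i-1}})} d\big(x,\st{P}(\cl{D}_{s_i})\big)\le \max_{x\in\st{P}(\cl{D}_{t_1})} d\big(x,\st{P}(\cl{D}_{t_2})\big)=d_H\big(\st{P}(\cl{D}_{t_1}),\st{P}(\cl{D}_{t_2})\big),$$
where we enlarged the index set of the maximum (using $\st{P}(\cl{D}_{s_{i-1}})\subseteq\st{P}(\cl{D}_{t_1})$) and shrank the target set (using $\st{P}(\cl{D}_{t_2})\subseteq\st{P}(\cl{D}_{s_i})$, which only increases each distance $d(x,\cdot)$). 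Plugging this into the previous inequality yields $\sum_{t=t_1+1}^{t_2} d(\theta_t,\theta_{t-1})\le k\gamma\, d_H(\st{P}(\cl{D}_{t_2}),\st{P}(\cl{D}_{t_1}))$ whenever $t_2-t_1\le kT$, which is exactly $(k\gamma,kT)$-weak competitiveness.

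There is no serious obstacle here: the argument is a routine interval decomposition, and the only nontrivial ingredient is the monotonicity of the Hausdorff distance under nesting, which is a one-line consequence of the formula $d_H(A,B)=\max_{x\in A}d(x,B)$ valid when $B\subseteq A$. The single point requiring care is the degenerate case of empty blocks, handled by the remark after the first display.
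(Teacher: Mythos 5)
Your proof is correct and follows essentially the same route as the paper: split the interval into $k$ consecutive blocks of length at most $T$, apply the $(\gamma,T)$-weak competitiveness hypothesis blockwise, and use nestedness of the consistent sets to dominate each blockwise Hausdorff term by $d_H(\st{P}(\cl{D}_{t_2}),\st{P}(\cl{D}_{t_1}))$. Your treatment is in fact slightly more careful than the paper's, which asserts the Hausdorff monotonicity under nesting without the one-line justification you supply and does not explicitly handle intervals shorter than $kT$.
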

\begin{proof}
Assume $(\gamma,T)$-w.c. and pick $k$ consecutive intervals $\cl{I}_1,\dots,\cl{I}_k$, $\cl{I}_j:=[\tau_j,\ol{\tau}_j]$, each of length $T$. Notice that $\st{P}(\cl{D}_{\ol{\tau}_k})\subset \dots \subset \st{P}(\cl{D}_{\tau_1})$ implies $d_H(\st{P}(\cl{D}_{\ol{\tau}_j}),\st{P}(\cl{D}_{\tau_j})) \leq d_H(\st{P}(\cl{D}_{\ol{\tau}_k}),\st{P}(\cl{D}_{\tau_1}))$ which leads to:
$$\sum^{k}_{j=1}\sum_{t\in\cl{I}_j} d(\theta_{t+1},\theta_{t}) \leq \gamma \sum^{k}_{j=1}\:d_H(\st{P}(\cl{D}_{\ol{\tau}_j}),\st{P}(\cl{D}_{\tau_j})) \leq \gamma k d_H(\st{P}(\cl{D}_{\ol{\tau}_k}),\st{P}(\cl{D}_{\tau_1}))$$
\end{proof}
\begin{rem}
    Notice that the above relation does \textit{not} improve the bounds of Thm.\ref{thm:redweak}\ref{it:conv3-app}.
\end{rem}
\subsubsection{Competitiveness of Steiner Point Selection}
\begin{lem}[Steiner-Point]\label{lem:steiner}
Let $s(\st{K})$ denote the steiner point of a convex body. The following inequalities hold for a nested sequence of convex bodies $\st{K}_1 \supset \st{K}_2 \dots \supset \st{K}_T$:
\begin{align*}
    &\sum^{T-1}_{t=1} \|s(\st{K}_t)-s(\st{K}_{t-1})\|_2 \leq \tfrac{n}{2}\dm(\st{K}_1), \quad &\sum^{T-1}_{t=1} \|s(\st{K}_t)-s(\st{K}_{t-1})\|_2 \leq n d_H(\st{K}_1,\st{K}_T)
\end{align*}
\end{lem}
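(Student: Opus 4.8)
The plan is to work entirely with the integral representation of the Steiner point. Writing the support function $h_K(v) := \max_{x \in K}\langle v, x\rangle$, equation \eqref{eq:st} reads $s(K) = n\int_{\Sn^{n-1}} h_K(v)\, v\, dv$, where $dv$ is the uniform probability measure on the sphere (this is the normalization consistent with \eqref{eq:st}, as one checks by evaluating on balls). The structural fact to exploit is monotonicity of support functions: since $\st{K}_1 \supset \st{K}_2 \supset \cdots \supset \st{K}_T$, one has $h_{\st{K}_{j}}(v) \le h_{\st{K}_{i}}(v)$ for all $i \le j$ and every direction $v$.

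First I would bound a single increment. From the integral representation, $s(\st{K}_{t-1}) - s(\st{K}_t) = n\int_{\Sn^{n-1}}\bigl(h_{\st{K}_{t-1}}(v) - h_{\st{K}_t}(v)\bigr)\,v\, dv$, and since $\|v\|_2 = 1$, the triangle inequality for vector-valued integrals gives $\|s(\st{K}_t) - s(\st{K}_{t-1})\|_2 \le n\int_{\Sn^{n-1}}\bigl(h_{\st{K}_{t-1}}(v) - h_{\st{K}_t}(v)\bigr)\, dv$, where the integrand is nonnegative by monotonicity, so this estimate is lossless. Summing over consecutive indices, the right-hand side telescopes and yields $\sum_t \|s(\st{K}_t) - s(\st{K}_{t-1})\|_2 \le n\,I$ with $I := \int_{\Sn^{n-1}}\bigl(h_{\st{K}_1}(v) - h_{\st{K}_T}(v)\bigr)\, dv$.

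It then remains to bound $I$ in two ways. For the Hausdorff bound, I would invoke the standard estimate $0 \le h_{\st{K}_1}(v) - h_{\st{K}_T}(v) \le d_H(\st{K}_1,\st{K}_T)$ for all unit $v$ (two lines: for $x \in \st{K}_1$ choose $y \in \st{K}_T$ with $\|x-y\| \le d_H(\st{K}_1,\st{K}_T)$ and take inner products with $v$), so $I \le d_H(\st{K}_1,\st{K}_T)$ and hence the total movement is at most $n\, d_H(\st{K}_1,\st{K}_T)$. For the diameter bound, I would symmetrize using the measure-preserving involution $v \mapsto -v$: averaging the integrand against its reflection replaces $h_{\st{K}_j}(v)$ by the width $b_{\st{K}_j}(v) := h_{\st{K}_j}(v) + h_{\st{K}_j}(-v) \ge 0$, so $I = \tfrac12\int_{\Sn^{n-1}}\bigl(b_{\st{K}_1}(v) - b_{\st{K}_T}(v)\bigr)\, dv \le \tfrac12 \int_{\Sn^{n-1}} b_{\st{K}_1}(v)\, dv \le \tfrac12 \dm(\st{K}_1)$, since each width is nonnegative and bounded by the diameter. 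This gives the total movement bound $\tfrac{n}{2}\dm(\st{K}_1)$.

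The support-function identities and the two-line Hausdorff estimate are routine. The only place requiring care is harvesting the factor of $2$ in the diameter bound: symmetrization over $v\mapsto -v$ must be performed \emph{before} bounding each directional width by the diameter (bounding first would only give $n\,\dm(\st{K}_1)$), and the vector-valued triangle inequality must be applied to the nonnegative increment $h_{\st{K}_{t-1}} - h_{\st{K}_t}$ so that the telescoping sum loses nothing.
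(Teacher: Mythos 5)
Your proof is correct and follows essentially the same route as the paper's: the paper invokes Theorem 2.1 of \cite{bubeck2020chasing} as a black box for the key inequality $\sum_t\|s(\st{K}_t)-s(\st{K}_{t-1})\|_2 \le \tfrac{n}{2}\left(w(\st{K}_1)-w(\st{K}_T)\right)$ and then bounds the mean-width difference by $\dm(\st{K}_1)$ and by $2d_H(\st{K}_1,\st{K}_T)$, which is exactly what your symmetrized integral $I$ computes. The only difference is that you inline a self-contained derivation of the cited inequality (integral representation of the Steiner point, nonnegativity of $h_{\st{K}_{t-1}}-h_{\st{K}_t}$ under nesting, telescoping), which is essentially the argument of the cited reference rather than a new approach.
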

\begin{proof}
Assume $\st{K}_1 \supset \st{K}_2 \dots \supset \st{K}_T$ is a nested sequence of convex bodies in $\R^n$, then as shown in \cite{bubeck2020chasing} Thm 2.1, it holds that
\begin{align}
\sum^{T-1}_{t=1} \|s(\st{K}_t)-s(\st{K}_{t-1})\|_2 \leq \tfrac{n}{2}(w(\st{K}_1)-w(\st{K}_T))
\end{align}
where $w(\st{S})$ denotes \textit{mean-width} of the set $\st{S}$. $w(\st{S})$ can be written as the average length of a random 1-dimensional projection of $\st{S}$: Let $v$ be distributed uniformly on the sphere $\mathbb{S}^{n-1}$ and let $w(\st{S})$ be defined as the expectation of $l_v(\st{S})$,
\begin{align}
    w(\st{S}):= \mathbb{E}_{v \sim \texttt{Unif}(\mathbb{S}^{n-1})} l_v(\st{S}),\quad l_v(\st{S}) := \dm(\mathrm{Proj}_v(\st{S}))
\end{align}
where $l_v(\st{S})$ is evaluated by first projecting the set $\st{S}$ onto the subspace spanned by the vector $v$, denoted as $\mathrm{Proj}_v(\st{S})$,  and taking its length. 
By linearity of expectation we can write 
\begin{align}
    \notag\sum^{T-1}_{t=1} \|s(\st{K}_t)-s(\st{K}_{t-1})\|_2 &\leq \tfrac{n}{2}\mathbb{E}_{v \sim \texttt{Unif}(\mathbb{S}^{n-1})} l_v(\st{K}_1)-l_v(\st{K}_T)\\
     &\leq \tfrac{n}{2}\mathbb{E}_{v \sim \texttt{Unif}(\mathbb{S}^{n-1})} 2d_{H}(\st{K}_1,\st{K}_T) = n d_H(\st{K}_1,\st{K}_T)
\end{align}
where the last inequality comes from noticing that per definition of the Hausdorff distance, it holds $\st{K}_1 \subset \st{K}_T \oplus d_H(\st{K}_1,\st{K}_T) \st{B}_{\|\anon\|_2}$. Similarly, we obtain
\begin{align}
    \notag\sum^{T-1}_{t=1} \|s(\st{K}_t)-s(\st{K}_{t-1})\|_2 &\leq \tfrac{n}{2}\mathbb{E}_{v \sim \texttt{Unif}(\mathbb{S}^{n-1})} l_v(\st{K}_1) \leq \tfrac{n}{2}\dm(\st{K}_1)
\end{align}
\end{proof}


\subsubsection{Separation of a collection of nested sets}
\begin{lem}\label{lem:compactpack}
    Let $(\st{K},d)$ be a compact metric space and let $\{\st{S}_1, \dots, \st{S}_T\}$, $\forall t:\:\st{S}_{t+1}\subset \st{S}_t$, $\st{S}_t\subset \st{K}$ be a collection of nested subsets in $\st{K}$ which are $\ep$-separated w.r.t. to the Hausdorff-metric $d_H$, i.e.: $d_H(\st{S}_{i},\st{S}_j)>\ep,\forall i\neq j$. Then it holds $T\leq N(\st{K},\ep)$.
\end{lem}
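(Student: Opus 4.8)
The plan is to convert the Hausdorff-$\ep$-separation of the nested family $\{\st{S}_1,\dots,\st{S}_T\}$ into an ordinary $\ep$-separated set of $T$ points in $\st{K}$; the bound $T\le N(\st{K},\ep)$ is then immediate from the definition of the packing number (Definition \ref{def:entropy}). Throughout I assume, as is implicit in the statement, that every $\st{S}_t$ is nonempty so that $d_H$ and the point-to-set distances are well defined.

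First I would simplify the Hausdorff distance using nestedness. Fix $t<T$. Since $\st{S}_{t+1}\subset\st{S}_t$, every point of $\st{S}_{t+1}$ has distance $0$ to $\st{S}_t$, so the ``$\max$'' defining $d_H(\st{S}_t,\st{S}_{t+1})$ is attained by the other term: $d_H(\st{S}_t,\st{S}_{t+1})=\sup_{y\in\st{S}_t} d(y,\st{S}_{t+1})$. By hypothesis this supremum is strictly greater than $\ep$, hence there exists a point $\theta_t\in\st{S}_t$ with $d(\theta_t,\st{S}_{t+1})>\ep$. Do this for each $t=1,\dots,T-1$, and additionally pick an arbitrary $\theta_T\in\st{S}_T$.

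Next I would verify that $\theta_1,\dots,\theta_T$ are pairwise more than $\ep$ apart. Take any $i<j$. Then $j\ge i+1$, so by nestedness $\theta_j\in\st{S}_j\subset\st{S}_{i+1}$, and therefore $d(\theta_i,\theta_j)\ge d(\theta_i,\st{S}_{i+1})>\ep$ by the choice of $\theta_i$ (note $i\le T-1$ since $i<j\le T$, so $\theta_i$ was indeed chosen with this property). Thus $\{\theta_1,\dots,\theta_T\}$ is an $\ep$-separated subset of $\st{K}$ of cardinality $T$, and Definition \ref{def:entropy} yields $T\le N(\st{K},\ep)$.

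This argument has no real obstacle; the only step needing a little care is the first one, where the reduction of $d_H$ to a one-sided supremum crucially uses the containment $\st{S}_{t+1}\subset\st{S}_t$ (so that one of the two competing terms vanishes), together with the mild nonemptiness assumption. Everything else is bookkeeping with the definitions, and the nestedness is exactly what allows the chosen points to ``witness'' the separation simultaneously for all later indices.
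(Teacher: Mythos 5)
Your proof is correct and follows essentially the same route as the paper's: in both cases nestedness reduces the Hausdorff distance to the one-sided supremum $\sup_{y\in\st{S}_t} d(y,\st{S}_{t+1})$, which yields a witness point $\theta_t\in\st{S}_t$ with $d(\theta_t,\st{S}_{t+1})>\ep$, and nestedness again gives $d(\theta_i,\theta_j)\geq d(\theta_i,\st{S}_{i+1})>\ep$ for $i<j$, so the witnesses form an $\ep$-separated set of size $T$. Your added care about nonemptiness and about the supremum being strict rather than attained is a minor tightening of the same argument.
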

\begin{proof}
    Assume that $\{\st{S}_1, \dots,\st{S}_T\}$ is a $\ep$-separated subset of the metric space $(2^\st{K},d_H)$, where $d_H$ is the Hausdorff metric.
    Since for all $t$ holds that $d_H(\st{K}_{t},\st{K}_{t+1}) >\ep$ and $\st{K}_{t+1}\subset \st{K}_{t}$, it means that there exists at least one point $p_{t}\in \st{K}_{t}$ such that $d(p_{t},\st{K}_{t+1})>\ep$. Since for all $j>t$ holds $p_j\in \st{K}_{j}\subset \st{K}_{t+1}$, we conclude $d(p_t,p_j)\geq d(p_{t},\st{K}_{t+1})  >\ep$ for all $j>t$. This establishes that $\{p_1,\dots, p_T\}$ is a $\ep$-separated subset of $\st{K}$. Therefore, we can bound the size of the set $T$ by the packing number $T\leq N(\st{K},\ep)$.
\end{proof}
\subsection{Theorem \ref{thm:red} and \ref{thm:redweak}}
\begin{thmset*}
    Let $\partpl = (\mathbb{T},\st{K},d)$ be a compact parametrization of the uncertainty set $\cl{F}$ corresponding to the partially unknown system $x_{t+1} = f^*(t,x_t,u_t), f^* \in \cl{F}$.
    Given two procedures of the type $\pi:\st{K}\mapsto\cl{C}$ and $\sel:\mathbb{D}\mapsto \st{K}$, we apply the online control strategy $\cl{A}_{\pi}(\sel):\mathbb{D}\mapsto\cl{U}$ described in Meta-Algorithm \ref{alg:ApiselOCO} to the system. The corresponding online state and input trajectories, denoted $(\bm{x},\bm{u})$, follow the equations:
    \begin{subequations}
        \label{eq:app-cl-eq}
    \begin{align}
        x_{t+1} &= f^*(t,x_t,u_t)\\
        u_t &= \pi[\theta_t](t,x_t),\\
        \theta_t &= \sel[d_t,\dots,d_1],\text{ where }d_t = (t,x_t,x_{t-1},u_{t-1})
    \end{align}
    \end{subequations}
   Assume we are given an objective $\bm{\cl{G}}$. The next results state conditions on $\pi$ and $\sel$, which are sufficient for bounding a-priori the corresponding worst-case online cost $\sum^\infty_{t=0} \cl{G}_t(x_t,u_t)$. 
   The condition on $\pi$ depends on $\bm{\cl{G}}$ and $\cl{T}$, while the condition on $\sel$ depends only on $\cl{T}$.
    
\end{thmset*}
\begin{thm}\label{thm:red-app}
    Assume that $\sel$ chases consistent models and that $\pi$ is an oracle for an objective $\bm{\cl{G}}$. Then the following mistake guarantees hold:
    \begin{enumerate}[label=(\roman*)]
     \vspace{-0.05in}
     \item \label{it:conv1-app}If $\pi$ is robust then $(\bm{x},\bm{u})$ always satisfy: $\sum^{\infty}_{t=0}{\cl{G}}_t(x_t,u_t) < \infty.$
     \item \label{it:conv4-app} If $\pi$ is uniformly $\rho$-robust and $\sel$ is $\gamma$-competitive, then $(\bm{x},\bm{u})$ obey the inequality:
     $$ \sum^{\infty}_{t=0}{\cl{G}}_t(x_t,u_t) \leq \tpiv{\rho}\Big(\tfrac{2\gamma}{\rho} \dm(\st{K}) +1\Big).$$
    \end{enumerate}
\end{thm}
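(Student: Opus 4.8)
I would reduce both claims to the oracle's nominal‑trajectory guarantees (the corollaries attached to Definition~\ref{app-def:pi-rhorobust}), applied piecewise in time. The one structural fact to establish first is: \emph{if on a time interval $\cl{I}=[t_1,t_2]$ all posited parameters $\theta_t$ ($t\in\cl{I}$) lie within distance $\rho$ of a common $\theta^\star\in\st{P}(\cl{D}_{t_2})$, then the online trajectory restricted to $\cl{I}$ lies in $\cl{S}_{\cl{I}}[\rho;\theta^\star]$} — and hence $\sum_{t\in\cl{I}}\cl{G}_t(x_t,u_t)\le m^\pi_\rho(\|x_{t_1}\|;\theta^\star)$, which is $\le M^\pi_\rho$ under uniform $\rho$-robustness. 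This holds because $u_t=\pi[\theta_t](t,x_t)$ by construction of $\cl{A}_\pi(\sel)$, the $\theta_t$ lie in $\st{B}_\rho(\theta^\star)$ by hypothesis, and consistency of $\theta^\star$ with $\cl{D}_{t_2}$ yields an $f\in\mathbb{T}(\theta^\star)$ reproducing every recorded transition, in particular every transition $x_k\!\to\!x_{k+1}$ with $k,k{+}1\in\cl{I}$, so $f$ is exactly the witness model required by Definition~\ref{app-def:pi-rhorobust}. I would also record at the outset that $f^*$ is consistent with every $\cl{D}_t$ and $f^*\in\mathbb{T}(\theta^*)$ for some $\theta^*$, so each $\st{P}(\cl{D}_t)$ is non-empty (and, by the corollary to Definition~\ref{app-def:Pt}, closed and nested with non-empty intersection).

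For part~\ref{it:conv1-app} I would use chasing property~(A): $\theta^\star:=\lim_t\theta_t$ exists, and since $\theta_k\in\st{P}(\cl{D}_k)\subseteq\st{P}(\cl{D}_{t'})$ for $k\ge t'$ with $\st{P}(\cl{D}_{t'})$ closed, $\theta^\star\in\st{P}(\cl{D}_{t'})$ for every finite $t'$. Fix a margin $\rho>0$ for which $\pi$ is $\rho$-robust and choose $T$ with $d(\theta_t,\theta^\star)\le\rho$ for all $t\ge T$. Applying the structural fact to $\cl{I}=[T,t']$ for each $t'>T$ (anchor $\theta^\star$) gives $\sum_{t=T}^{t'}\cl{G}_t(x_t,u_t)\le m^\pi_\rho(\|x_T\|;\theta^\star)$, finite and independent of $t'$ by $\rho$-robustness; letting $t'\to\infty$ and adding the $\le T$ mistakes before time $T$ gives $\sum_{t\ge0}\cl{G}_t(x_t,u_t)<\infty$.

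For part~\ref{it:conv4-app} the plan is to first bound the total parameter movement. Since $\st{P}(\cl{D}_0)=\st{K}$, $\gamma$-competitiveness on $[0,\bar\tau]$ gives $\sum_{t=1}^{\bar\tau}d(\theta_t,\theta_{t-1})\le\gamma\,d_H(\st{P}(\cl{D}_{\bar\tau}),\st{K})\le\gamma\dm(\st{K})$ for all $\bar\tau$, hence $V:=\sum_{t\ge1}d(\theta_t,\theta_{t-1})\le\gamma\dm(\st{K})$. Then I would partition time greedily: $s_0=0$, and $s_{j+1}:=$ the first time after $s_j$ at which the diameter of $\{\theta_{s_j},\dots,\theta_t\}$ exceeds $\rho$; since $V<\infty$ this produces finitely many restarts $s_0<\cdots<s_N$ and a final tail. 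On each piece $\cl{I}_j$ ($[s_j,s_{j+1}{-}1]$ for $j<N$, and $[s_N,\infty)$, handled through its finite sub-intervals) the posited parameters have diameter $\le\rho$, so they all lie within $\rho$ of $\theta_{\max\cl{I}_j}\in\st{P}(\cl{D}_{\max\cl{I}_j})$, and the structural fact together with uniform $\rho$-robustness gives $\sum_{t\in\cl{I}_j}\cl{G}_t(x_t,u_t)\le M^\pi_\rho$. Finally, each restart consumes more than $\rho$ of movement over a block of consecutive steps that are disjoint across restarts (at $s_{j+1}$ some $\theta_k$, $k\in[s_j,s_{j+1}{-}1]$, has $d(\theta_{s_{j+1}},\theta_k)>\rho$, so $\sum_{t=k+1}^{s_{j+1}}d(\theta_t,\theta_{t-1})>\rho$), whence $N\rho<V\le\gamma\dm(\st{K})$, i.e.\ $N\le\gamma\dm(\st{K})/\rho$. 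Summing over the $N+1$ pieces, $\sum_{t\ge0}\cl{G}_t(x_t,u_t)\le(N+1)M^\pi_\rho\le M^\pi_\rho\big(\tfrac{2\gamma}{\rho}\dm(\st{K})+1\big)$.

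I expect the main obstacle to be the structural fact of the first paragraph — and more precisely, realizing that one should partition by the \emph{diameter} of the posited-parameter path rather than by proximity to a fixed (true or limiting) parameter, so that the right endpoint of each piece can serve simultaneously as a $\rho$-center \emph{and} as a parameter genuinely consistent with the data recorded on that piece; these are exactly the two ingredients needed for membership in $\cl{S}_\cl{I}[\rho;\cdot]$. Secondary care is needed in matching the transitions recorded in $\cl{D}_t$ against those required by the definition of $\cl{S}_\cl{I}$ at interval endpoints, and with degenerate singleton pieces, where one simply bounds the single-step cost by $1\le M^\pi_\rho$. The movement-to-count accounting in part~\ref{it:conv4-app} is then routine.
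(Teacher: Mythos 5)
Your proposal is correct and follows essentially the same route as the paper: part (i) via convergence of $\theta_t$ to a limit lying in every consistent set $\st{P}(\cl{D}_{t'})$, and part (ii) via a greedy partition of time into intervals on which the posited parameters stay $\rho$-close to the interval's right endpoint, which, being in $\st{P}(\cl{D}_{\ol{t}_k})$, supplies the witness model needed for membership in $\cl{S}_{\cl{I}_k}[\rho;\cdot]$, with competitiveness bounding the number of intervals by the total parameter movement. The only difference is that you trigger restarts when the parameter path's diameter exceeds $\rho$ rather than when the distance from the interval's start exceeds $\rho/2$; this spares the paper's re-centering triangle inequality and in fact yields the slightly sharper constant $\tfrac{\gamma}{\rho}\dm(\st{K})+1$, which of course still implies the stated bound.
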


\begin{thm}\label{thm:redweak-app}
    Assume that $\sel$ chases consistent models {weakly} and that $\pi$ is an uniformly $\rho$-robust, {cost-invariant} oracle for an objective $\bm{\cl{G}}$. Then the following mistake guarantees hold:
    \begin{enumerate}[label=(\roman*)]
     \vspace{-0.05in}
     \item \label{it:conv2-app} $(\bm{x},\bm{u})$ always satisfy: $\sum^{\infty}_{t=0}{\cl{G}}_t(x_t,u_t) < \infty.$
     \item \label{it:conv3-app} If $\sel$ is $(\gamma,T)$-weakly competitive, then $(\bm{x},\bm{u})$ obey the inequality:
        $$\sum^{\infty}_{t=0} \cl{G}_t(x_t,u_t) \leq M^\pi_\rho\left(N(\st{K},r^*)+1\right),\: \quad r^*:=\frac{1}{2}\frac{\rho}{\gamma} \frac{T}{M^\pi_\rho+T}$$
    where $N$ is defined in \defref{def:entropy} as  the $r^*$-packing number of $\st{K}$.
    \end{enumerate}
\end{thm}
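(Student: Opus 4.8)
The whole argument rests on one observation: on any window $\cl{I}=[t_1,t_2]$ over which the selected parameters move little, the online run is indistinguishable from an idealized nominal run, so the oracle corollaries apply. I first isolate this as a \emph{key claim}: \emph{if} $\sum_{k=t_1+1}^{t_2} d(\theta_k,\theta_{k-1})\le\rho$, \emph{then} $(x_\cl{I},u_\cl{I})\in\bigcup_{\theta\in\st{K}}\cl{S}_\cl{I}[\rho;\theta]$, so that by the corollary for uniformly $\rho$-robust oracles $\sum_{t\in\cl{I}}\cl{G}_t(x_t,u_t)\le M^\pi_\rho$, and by part~(i) of the corollary for cost-invariant oracles the set $\{t\in\cl{I}:\cl{G}_t(x_t,u_t)=1\}$ is an initial segment of $\cl{I}$ (``once good, stays good''), hence of length $\le M^\pi_\rho$. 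The key claim is proved by taking the center in the definition of $\cl{S}_\cl{I}[\rho;\cdot]$ to be $\theta_{t_2}$ itself: since $\sel$ is a selection of $\st{P}$, $\theta_{t_2}\in\st{P}(\cl{D}_{t_2})$, so some $f\in\mathbb{T}(\theta_{t_2})$ reproduces every transition recorded in $\cl{D}_{t_2}$ (in particular those on $\cl{I}$), while $d(\theta_k,\theta_{t_2})\le\sum_{j=k+1}^{t_2} d(\theta_j,\theta_{j-1})\le\rho$ for all $k\in\cl{I}$, which is exactly the requirement $(\theta_{t_1},\dots,\theta_{t_2})\subset\st{B}_\rho(\theta_{t_2})$.

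\textbf{Part (i).} Weak chasing gives property~(B) (Corollary~\ref{coro:sel}): $d(\theta_t,\theta_{t-1})\to0$. Pick $N$ with $d(\theta_t,\theta_{t-1})<\rho/M^\pi_\rho$ for $t>N$ (the case $M^\pi_\rho=0$ being vacuous). If some $b>N+M^\pi_\rho$ had $\cl{G}_b(x_b,u_b)=1$, the key claim applied to $\cl{I}=[b-M^\pi_\rho,b]$ (whose total parameter movement is $<M^\pi_\rho\cdot\rho/M^\pi_\rho=\rho$) says the bad times in $\cl{I}$ form an initial segment of length $\le M^\pi_\rho$, which cannot reach the last point $b$ --- contradiction. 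So every mistake occurs before $N+M^\pi_\rho$, proving $\sum_t\cl{G}_t(x_t,u_t)<\infty$.

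\textbf{Part (ii).} By~(i) there is a last mistake time, so only finitely many windows matter. Anchor windows at mistakes: let $b_1$ be the first mistake time, and given $b_j$ put $e_j:=\max\{t\ge b_j:\sum_{k=b_j+1}^{t}d(\theta_k,\theta_{k-1})\le\rho\}$ and let $b_{j+1}$ be the first mistake after $e_j$ (stopping at $j=J$ when none exists). By the key claim each $[b_j,e_j]$ contains at most $M^\pi_\rho$ mistakes, all inside $[b_j,b_j+M^\pi_\rho-1]$, and $(e_j,b_{j+1})$ is mistake-free, so $\sum_t\cl{G}_t(x_t,u_t)\le J\,M^\pi_\rho$; it remains to show $J\le N(\st{K},r^*)+1$. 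By maximality of $e_j$, $\sum_{k=b_j+1}^{e_j+1}d(\theta_k,\theta_{k-1})>\rho$. If $e_j-b_j\le M^\pi_\rho+T-2$, then $[b_j,e_j+1]$ has length $\le M^\pi_\rho+T$, and Corollary~\ref{coro:comp} (with $k=\lceil(M^\pi_\rho+T)/T\rceil$) gives $d_H(\st{P}(\cl{D}_{b_j}),\st{P}(\cl{D}_{e_j+1}))>\rho/(k\gamma)\ge r^*$. Otherwise $e_j$ lies in the mistake-free suffix of $[b_j,e_j]$, so $b_{j+1}-1$ is good; applying the key claim to $[b_{j+1}-M^\pi_\rho,b_{j+1}]$ forces its parameter movement above $\rho$ (else the last point $b_{j+1}$ would be good), and since this interval has length $M^\pi_\rho+1\le M^\pi_\rho+T$ and starts after $b_j$, Corollary~\ref{coro:comp} together with nestedness of the consistent sets again yields $d_H(\st{P}(\cl{D}_{b_j}),\st{P}(\cl{D}_{b_{j+1}}))>r^*$. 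In both cases consecutive anchor sets are $r^*$-separated, so by nestedness $\st{P}(\cl{D}_{b_1})\supset\dots\supset\st{P}(\cl{D}_{b_J})$ is an $r^*$-separated collection of nested sets in $(2^{\st{K}},d_H)$; Lemma~\ref{lem:compactpack} then gives $J-1\le N(\st{K},r^*)$, i.e. $\sum_t\cl{G}_t(x_t,u_t)\le M^\pi_\rho\bigl(N(\st{K},r^*)+1\bigr)$.

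\textbf{Main obstacle.} The delicate point is the length of an anchored window $[b_j,e_j]$: its budget of parameter movement is $\le\rho$ but may be spread thinly, so the window can be arbitrarily long and cannot be fed directly into $(\gamma,T)$-weak competitiveness, which only controls windows of length $\le T$. The fix is to relocate, in the ``long'' case, the separation witness onto the short interval of length $M^\pi_\rho+1$ immediately before the next mistake, and to absorb the length blow-up $\sim M^\pi_\rho+T$ into the factor $k=\lceil(M^\pi_\rho+T)/T\rceil$ via Corollary~\ref{coro:comp}; this is exactly what produces the awkward radius $r^*=\tfrac12\tfrac{\rho}{\gamma}\tfrac{T}{M^\pi_\rho+T}$. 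Making the case split and the boundary window line up to give the clean constant $N(\st{K},r^*)+1$, and checking that the closure in the definition of $\st{P}(\cl{D})$ does not obstruct the ``center $=\theta_{t_2}$'' step, is where the real care is needed.
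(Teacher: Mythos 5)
Your proof is correct, and for part (ii) it takes a genuinely different route from the paper's. For part (i) the two arguments are essentially the same: both locate a tail window on which the total parameter drift is below $\rho$ and then use cost-invariance to show the window cannot end in a mistake. For part (ii), the paper partitions the entire timeline into maximal intervals on which $\theta_t$ stays within $\rho/2$ of the interval's \emph{initial} parameter, counts only those intervals that are followed by a mistake at the start of the next one, and extracts the $r^*$-separation via a pigeonhole over length-$T$ sub-intervals (for every $l$ some length-$l$ sub-interval carries at least a $\tfrac{l}{|\cl{I}_k|+l}$ fraction of the $\rho/2$ movement), together with a case split on whether the interval ends in an admissible state. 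You instead anchor windows at mistake times, let each window run until a total movement budget of $\rho$ is exhausted, and resolve the length-versus-$T$ mismatch by relocating the separation witness to the length-$M^\pi_\rho$ window immediately preceding the next mistake and invoking Corollary \ref{coro:comp} to amplify $(\gamma,T)$-weak competitiveness to windows of length $M^\pi_\rho+T$; the ceiling in $k=\lceil (M^\pi_\rho+T)/T\rceil$ costs at most a factor $2$, which is exactly where the $\tfrac{1}{2}$ in $r^*$ comes from, so the constants match. The underlying mechanism is the same in both proofs (each mistake beyond the first batch forces at least $\rho$ of parameter movement over a window of length at most $M^\pi_\rho+T$, hence a Hausdorff shrinkage of at least $r^*$ of the nested consistent sets, hence a packing bound via Lemma \ref{lem:compactpack}), but your decomposition is arguably cleaner and avoids the paper's sub-interval pigeonhole lemma entirely. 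Two minor remarks: your key claim with center $\theta_{t_2}$ glosses over the closure in the definition of $\st{P}(\cl{D})$ exactly as the paper's own proof does (you at least flag it), and your count actually yields $J\le N(\st{K},r^*)$, slightly better than the stated $N(\st{K},r^*)+1$.
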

\subsubsection{ Part \ref{it:conv2-app} - Theorem \ref{thm:red-app} and \ref{thm:redweak-app} }

\begin{thm*}
    Assume that $\sel$ chases consistent models and that $\pi$ is an oracle for $\bm{\cl{G}}$. Then $(\bm{x},\bm{u})$ always satisfy: $\sum^{\infty}_{t=0}{\cl{G}}_t(x_t,u_t) < \infty.$ 
\end{thm*}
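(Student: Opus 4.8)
The plan is to show that after finitely many steps the online trajectory $(\bm x,\bm u)$ coincides with a \emph{nominal} trajectory of the oracle in the idealized scenario of \defref{app-def:pi-rhorobust}: for a robustness margin $\rho>0$ of $\pi$ and some finite $T$, every segment $(x_{[T,\ol T]},u_{[T,\ol T]})$ with $\ol T>T$ lies in $\bigcup_{\theta\in\st K}\cl S_{[T,\ol T]}[\rho;\theta]$. Granting this, the oracle guarantee recorded in \defref{app-def:pi-rhorobust} bounds $\sum_{t\ge T}\cl G_t(x_t,u_t)$, and adding the trivially finite head $\sum_{t<T}\cl G_t(x_t,u_t)\le T$ gives the claim. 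I would split along the hypotheses actually in force: Case~(i), $\sel$ has a chasing property forcing $\theta_t\to\theta^*$ (properties (A) or (C)) and $\pi$ is $\rho$-robust --- this is part~\ref{it:conv1-app} of \thmref{thm:red-app}; Case~(ii), $\sel$ only guarantees $d(\theta_t,\theta_{t-1})\to 0$ (properties (B) or (D)) while $\pi$ is uniformly $\rho$-robust and cost-invariant --- this is part~\ref{it:conv2-app} of \thmref{thm:redweak-app}.

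Case~(i). Since $\theta_t=\sel[\cl D_t]\in\st P(\cl D_t)$ for every $t$ and $\theta_t\to\theta^*$, the corollary to \defref{app-def:Pt} (the consistent sets are nested and closed) gives $\theta^*\in\st P(\cl D_\infty)$, hence a single model $\bar f\in\mathbb T[\theta^*]$ consistent with the whole stream, so $x_{t+1}=\bar f(t,x_t,u_t)$ for all $t$. Pick $T$ with $d(\theta_t,\theta^*)\le\rho$ for $t\ge T$. For any $\ol T>T$ the segment $(x_{[T,\ol T]},u_{[T,\ol T]})$ then meets the defining conditions of $\cl S_{[T,\ol T]}[\rho;\theta^*]$ --- take $f=\bar f$ and the actual selections $\theta_T,\dots,\theta_{\ol T}$ as the $\rho$-close measurements --- so $\sum_{t=T}^{\ol T}\cl G_t(x_t,u_t)\le m^\pi_\rho(\|x_T\|;\theta^*)<\infty$ by $\rho$-robustness; letting $\ol T\to\infty$ bounds the tail.

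Case~(ii). Now $M^\pi_\rho<\infty$. Fix an integer $L>M^\pi_\rho$ and $N$ with $d(\theta_t,\theta_{t-1})\le\rho/L$ for all $t>N$; I claim $\cl G_t(x_t,u_t)=0$ for every $t\ge N+L$. Fixing such a $t$, put $\cl I=[t-L,t]$. The selection $\theta_t$ is consistent with $\cl D_t$, which records all transitions up to time $t-1$, so some $f\in\mathbb T[\theta_t]$ gives $x_{k+1}=f(k,x_k,u_k)$ for $k\in[t-L,t-1]$; and $d(\theta_k,\theta_t)\le\sum_{j=k+1}^{t}d(\theta_j,\theta_{j-1})\le L\cdot(\rho/L)=\rho$ for every $k\in\cl I$. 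Hence $(x_{\cl I},u_{\cl I})\in\cl S_{\cl I}[\rho;\theta_t]$, and since $|\cl I|=L+1>M^\pi_\rho$ the cost-invariance corollary to \defref{app-def:pi-invariant} (which pushes $\cl G=0$ forward along nominal trajectories) forces $\cl G_t(x_t,u_t)=0$. Therefore $\sum_{t\ge0}\cl G_t(x_t,u_t)=\sum_{t=0}^{N+L-1}\cl G_t(x_t,u_t)<\infty$.

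The delicate point --- and the main obstacle --- is passing from ``$\theta\in\st P(\cl D)$'' to ``$\mathbb T[\theta]$ contains a model genuinely consistent with $\cl D$'', since $\st P$ is defined as a \emph{closure}; for all the polyhedral instantiations in \secref{sec:instantiation} the raw consistent sets are already closed and this is automatic, while in general one anchors instead at a consistent $\widetilde\theta$ with $d(\widetilde\theta,\theta^*)$ and $\sup_{k\ge T}d(\theta_k,\theta^*)$ both below $\rho/2$, so that the segment still lies in $\cl S_{\cl I}[\rho;\widetilde\theta]$, and in Case~(i) one must additionally check the resulting bound is uniform in $\ol T$ (immediate under uniform $\rho$-robustness, otherwise via control of $m^\pi_\rho(\,\cdot\,;\theta)$ near $\theta^*$). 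Everything else --- the telescoping bound on $d(\theta_k,\theta_t)$, tracking which transitions each $\cl D_t$ encodes, and finiteness of the head sum --- is routine.
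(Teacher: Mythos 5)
Your Case~(i) is the paper's own argument for this statement: convergence of $\theta_t$ to a limit $\theta^*\in\st{P}(\cl{D}_\infty)$ yields a single model in $\mathbb{T}[\theta^*]$ consistent with the entire trajectory, the tail over $[T,\infty)$ with $d(\theta_t,\theta^*)\le\rho$ then lies in $\cl{S}_{[T,\infty)}[\rho;\theta^*]$, and $\rho$-robustness of the oracle bounds the tail cost, with the finite head added on. Your closing remark about the closure in the definition of $\st{P}$ flags a genuine (if minor) imprecision that the paper's proof glosses over, and your $\rho/2$-anchoring fix is sound.
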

\begin{proof}
Denote the online data at time $t$ as the tuple $\cl{D}_t := (d_1,\dots,d_t)$. 
Recall \corref{app-def:Pt}\ref{it:Pinf} to verify that $\st{P}({\cl{D}_\infty})$ is non-empty. Since $\theta_{\infty}\in \st{P}_{\infty}$, 
there exists some $f'\in \mathbb{T}(\theta_{\infty})$ such that the trajectories satisfy for all time $t\geq 0$ the dynamics $x_{t+1} = f'(t,x_t,u_t)$.
Since $\theta_t \rightarrow \theta_{\infty}$, there exists a time $T$, such that for all $t \geq T$, $d(\theta_t,\theta_{\infty}) < \rho$, i.e.: for $t\geq T$, we apply policies $\pi(t;\theta_{t})$ with parameters $\rho$-close to $\theta_{\infty}$. 
Per definition, this tells us that for the time interval $\cl{I}=[T,\infty)$, the tail of the trajectory $x_{\cl{I}},u_{\cl{I}}$ is contained in $\cl{S}_{\cl{I}}[\rho;\theta_\infty]$. 
Since we assume that $\pi$ is a $\rho$-robust oracle for $\bm{\cl{G}}$, (\defref{app-def:pi-rhorobust}), we have to conclude that $\sum^{\infty}_{t=T}\cl{G}_t(x_t,u_t)\leq M$ for some finite number $M$. This proves the desired claim since $\sum^{\infty}_{t=0}\cl{G}_t(x_t,u_t) \leq \sum^{T-1}_{t=0}\cl{G}_t(x_t,u_t) + M$. 
\end{proof}
    
\begin{thm*}
    Assume that $\sel$ chases consistent models {weakly} and that $\pi$ is an uniformly $\rho$-robust, {cost-invariant} oracle for $\bm{\cl{G}}$. Then, $(\bm{x},\bm{u})$ always satisfy: $\sum^{\infty}_{t=0}{\cl{G}}_t(x_t,u_t) < \infty.$
\end{thm*}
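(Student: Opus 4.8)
The plan is to mimic the argument for part \ref{it:conv2-app} of Theorem \ref{thm:red-app}, but to work around the fact that chasing property (B) only gives $\lim_{t\to\infty} d(\theta_t,\theta_{t-1})=0$ rather than convergence of $\bm{\theta}$: we cannot trap the whole tail of the trajectory inside a single nominal set $\cl{S}_{\cl{I}}[\rho;\theta_\infty]$. Instead I would exploit that $M^\pi_\rho<\infty$ is a fixed number (since $\pi$ is uniformly $\rho$-robust) together with the fact that (B) makes the per-step parameter drift eventually as small as we like, so that over any window of bounded length the posited parameters stay within $\rho$ of their last value. The three steps are: (1) show that sufficiently late, short windows of $(\bm x,\bm u)$ lie in $\bigcup_{\theta\in\st{K}}\cl{S}_{\cl{I}}[\rho;\theta]$; (2) use a window of length $M^\pi_\rho+1$ and the cost-invariance corollary following \defref{app-def:pi-invariant}(ii) to locate a first mistake-free time $T_*$; (3) use two-step windows and part (i) of the same corollary to show mistake-freeness propagates forever from $T_*$.

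For step (1): fix $\varepsilon:=\rho/(M^\pi_\rho+1)$ and, by (B), pick $T_0$ with $d(\theta_s,\theta_{s-1})<\varepsilon$ for all $s>T_0$. The key observation is that the correct ``anchor'' for a window $\cl{I}=[t_1,t_2]$ is $\theta_{t_2}$, the \emph{last} posited parameter: since $\sel$ is a selection of $\st{P}$ we have $\theta_{t_2}\in\st{P}(\cl{D}_{t_2})$, so there is $f\in\mathbb{T}[\theta_{t_2}]$ consistent with $\cl{D}_{t_2}$, i.e.\ $x_{k+1}=f(k,x_k,u_k)$ for all $k<t_2$. If in addition $T_0<t_1$ and $t_2-t_1\le M^\pi_\rho$, then for every $k\in\cl{I}$ the triangle inequality gives $d(\theta_k,\theta_{t_2})\le\sum_{j=k+1}^{t_2}d(\theta_j,\theta_{j-1})<M^\pi_\rho\varepsilon<\rho$, so each $\theta_k\in\st{B}_\rho(\theta_{t_2})$; combined with $u_k=\pi[\theta_k](k,x_k)$ this is precisely a witness for $(x_{\cl{I}},u_{\cl{I}})\in\cl{S}_{\cl{I}}[\rho;\theta_{t_2}]$ as in \defref{app-def:pi-rhorobust}. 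The same reasoning applies to any two-step window $[t,t+1]$ with $t\ge T_0$ (anchor $\theta_{t+1}$, which explains the transition $x_t\to x_{t+1}$ recorded in $\cl{D}_{t+1}$, and $d(\theta_t,\theta_{t+1})<\varepsilon\le\rho$).

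For steps (2) and (3): apply item (ii) of the cost-invariance corollary to $\cl{I}=[\,T_0+1,\ T_0+1+M^\pi_\rho\,]$, which by step (1) lies in $\bigcup_{\theta}\cl{S}_{\cl{I}}[\rho;\theta]$; since $t_1+M^\pi_\rho=t_2$ here, this yields $\cl{G}_{t_2}(x_{t_2},u_{t_2})=0$, so set $T_*:=T_0+1+M^\pi_\rho$. Then for each $t\ge T_*$ the window $[t,t+1]$ is again in $\bigcup_\theta\cl{S}_{[t,t+1]}[\rho;\theta]$, so item (i) of the corollary gives $\cl{G}_t(x_t,u_t)=0\Rightarrow\cl{G}_{t+1}(x_{t+1},u_{t+1})=0$; induction from the base case $t=T_*$ gives $\cl{G}_t(x_t,u_t)=0$ for all $t\ge T_*$, whence $\sum_{t=0}^{\infty}\cl{G}_t(x_t,u_t)\le T_*<\infty$. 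I expect the only genuinely delicate point to be the anchor choice in step (1): it must be the latest parameter $\theta_{t_2}$, because only $\cl{D}_{t_2}$ is guaranteed to furnish an $f\in\mathbb{T}[\theta_{t_2}]$ explaining every transition inside $\cl{I}$ (an accumulation point of $\bm\theta$, or $\theta_{t_1}$, need not do so); once that is set up, the drift bound, the boundary case $t_1+M^\pi_\rho=t_2$ in item (ii), and the degenerate case $M^\pi_\rho=0$ (handled automatically since $\varepsilon\le\rho$) are routine.
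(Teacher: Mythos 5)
Your proposal is correct and follows essentially the same route as the paper's own proof: use chasing property (B) to make the per-step parameter drift eventually smaller than $\rho/(M^\pi_\rho+1)$, anchor each late window of length at most $M^\pi_\rho$ at the most recent consistent parameter so that the windowed trajectory lies in $\cl{S}_{\cl{I}}[\rho;\cdot]$, and invoke the cost-invariance corollary to conclude the tail is mistake-free. The only cosmetic difference is that the paper applies item (ii) of that corollary to a window ending at each arbitrary late time $s$, whereas you apply it once to seed a first mistake-free time and then propagate with item (i) and two-step windows; your anchor $\theta_{t_2}\in\st{P}(\cl{D}_{t_2})$ plays exactly the role of the paper's $\theta'_s$.
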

\begin{proof}
Pick an arbitrary $0 < \ep < \rho$ and some $T \geq M^{\pi}_\rho$. There exists $N>0$ such that $\forall t \geq N$: $\dist(\st{P}(\cl{D}_t),\theta_t) \leq \ep/4$ and $d(\theta_t,\theta_{t-1}) \leq \ep/ (2T)$.
Pick an arbitrary time-step $s>N+T$, there exists a $\theta'_{s} \in \st{P}(\cl{D}_{s})$ such that $d(\theta'_{s},\theta_{s}) \leq \ep/4$. Consider now the timesteps $t$ in the time-window $\cl{I}_{s} = [s-T,s-1]$. Per assumption and triangle inequality, we have for all $t \in \cl{I}_{s}$:
\begin{align}
    \notag d(\theta'_{s},\theta_t) &\leq d(\theta'_{s},\theta_{s})+\sum\limits^{s-1}_{j=t} d(\theta_{j+1},\theta_j) \leq \frac{\ep}{4} + (s-t)\frac{\ep}{2T} \leq \frac{\ep}{4} + T\frac{\ep}{2T} \leq \ep < \rho.
\end{align}
Since $\theta'_{s}\in \st{P}(\cl{D}_{\infty})$, the truncation ($\bm{x}_{\cl{I}_{s}}$, $\bm{u}_{\cl{I}_{s}}$) is contained in the set $\cl{S}_{\cl{I}_s}[\rho;\theta'_s]$. Since we picked $T$ to be larger than the mistake constant $M^\pi_\rho$, we can use \corref{app-def:pi-invariant} to conclude that the cost at time $s$ has to be zero, i.e.: $\cl{G}_{s}(x_{s},u_{s}) = 0$. Finally, since $s$ was chosen arbitrary in the interval $[N+T+1,\infty)$, we know that $\sum^{\infty}_{t=N+T+1}\cl{G}_t(x_t,u_t)=0$. The total cost is therefore $\sum^{\infty}_{t=0}{\cl{G}}_t(x_t,u_t)=\sum^{N+T}_{t=0}{\cl{G}}_t(x_t,u_t)$ and is finite.

\end{proof}
\subsubsection{ Part \ref{it:conv4-app} - Theorem \ref{thm:red-app} and \ref{thm:redweak-app} }

\begin{thm*}
    Assume procedure $\sel$ is $(\gamma,T)$-weakly competitive for some $\gamma>0$, $T\geq 1$ and that procedure $\pi$ is a uniform $\rho$-robust, cost-invariant oracle for $\bm{\cl{G}}$. 
    Then, the total number of mistakes is guaranteed to be bounded above by:
    \begin{align*}
        \sum^{\infty}_{t=0} \cl{G}_t(x_t,u_t) \leq M^\pi_\rho\left(N(\st{K}^{\circ},r^*)+1\right)\leq M^\pi_\rho\left(N(\st{K},r^*)+1\right),\quad\text{with  } r^*:=\frac{1}{2}\frac{\rho}{\gamma} \frac{T}{M^\pi_\rho+T}
    \end{align*}
    where $\st{K}^\circ:=\st{K}\setminus \mathrm{int}(\st{P}(\cl{D}_\infty))$.
\end{thm*}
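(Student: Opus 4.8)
The plan is to cut time into finitely many consecutive blocks, show each block carries at most $M^\pi_\rho$ mistakes, and bound the number of blocks by $N(\st{K}^\circ,r^*)+1$ via a packing argument on the nested consistent sets $\st{P}(\cl{D}_0)\supseteq\st{P}(\cl{D}_1)\supseteq\cdots$. The engine is the following ``small drift $\Rightarrow$ few mistakes'' observation: if a window $\cl{I}=[c,s]$ has total parameter drift $\sum_{t=c+1}^{s}d(\theta_t,\theta_{t-1})<\rho$, then, taking $\theta_s=\sel[\cl{D}_s]\in\st{P}(\cl{D}_s)$ (up to the closure in \defref{app-def:Pt}: a genuinely consistent parameter within the remaining slack of $\theta_s$ works just as well), every $\theta_k$, $k\in\cl{I}$, lies in $\st{B}_\rho(\theta_s)$ by the triangle inequality, and since $\cl{D}_s$ records all transitions on $[c,s-1]$ we get $(x_\cl{I},u_\cl{I})\in\cl{S}_\cl{I}[\rho;\theta_s]$. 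As $\pi$ is uniformly $\rho$-robust and cost invariant, the corollary following \defref{app-def:pi-invariant} then gives $\sum_{t\in\cl{I}}\cl{G}_t(x_t,u_t)\le M^\pi_\rho$ and places all mistakes of $\cl{I}$ in its first $M^\pi_\rho$ steps; in particular $\cl{G}_s(x_s,u_s)=0$ once $s-c\ge M^\pi_\rho$.

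Next I would turn Hausdorff-closeness of consistent sets into a drift bound using $(\gamma,T)$-weak competitiveness. Given a window $[c,s]$ with $s-c=M^\pi_\rho$, split it into $\lceil M^\pi_\rho/T\rceil$ sub-windows each spanning at most $T$ steps; applying weak competitiveness on each and replacing every sub-window's Hausdorff term by $d_H(\st{P}(\cl{D}_c),\st{P}(\cl{D}_s))$ using nestedness (exactly as in the proof of \corref{coro:comp}) yields $\sum_{t=c+1}^{s}d(\theta_t,\theta_{t-1})\le\lceil M^\pi_\rho/T\rceil\,\gamma\,d_H(\st{P}(\cl{D}_c),\st{P}(\cl{D}_s))$. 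Since $\lceil M^\pi_\rho/T\rceil\le(M^\pi_\rho+T)/T$, whenever $d_H(\st{P}(\cl{D}_c),\st{P}(\cl{D}_s))\le r^*$ the right side is at most $\tfrac{M^\pi_\rho+T}{T}\gamma r^*=\tfrac{\rho}{2}<\rho$ --- which is exactly why $r^*$ carries the factor $\tfrac12\tfrac{T}{M^\pi_\rho+T}$ ($\tfrac12$ absorbing the ceiling slack, $\tfrac{T}{M^\pi_\rho+T}$ the cost of stitching $\approx M^\pi_\rho/T$ weakly-competitive windows).

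Then I would build the blocks greedily: $b_0:=0$ and $b_{i+1}:=\min\{t>b_i:\ d_H(\st{P}(\cl{D}_{b_i}),\st{P}(\cl{D}_t))>r^*\}$, setting $b_{i+1}:=\infty$ when no such $t$ exists. On a block $[b_i,b_{i+1}-1]$ nestedness gives $d_H(\st{P}(\cl{D}_a),\st{P}(\cl{D}_b))\le r^*$ for all $b_i\le a\le b\le b_{i+1}-1$, so for each $s$ in the block with $s-b_i\ge M^\pi_\rho$ the previous two steps applied to $[s-M^\pi_\rho,s]$ force $\cl{G}_s(x_s,u_s)=0$; hence every block contributes at most $M^\pi_\rho$ mistakes. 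And for $i\le k-2$ the definition of $b_{i+1}$ together with nestedness gives $p_i\in\st{P}(\cl{D}_{b_i})$ with $d(p_i,\st{P}(\cl{D}_{b_{i+1}}))>r^*$; since $\st{P}(\cl{D}_{b_j})\subseteq\st{P}(\cl{D}_{b_{i+1}})$ for $j>i$ and $\mathrm{int}(\st{P}(\cl{D}_\infty))\subseteq\st{P}(\cl{D}_{b_{i+1}})$, the points $p_0,\dots,p_{k-2}$ form an $r^*$-separated subset of $\st{K}^\circ=\st{K}\setminus\mathrm{int}(\st{P}(\cl{D}_\infty))$ (the argument inside \lemref{lem:compactpack}), so $k-1\le N(\st{K}^\circ,r^*)$ --- in particular the greedy process terminates and there are at most $N(\st{K}^\circ,r^*)+1$ blocks. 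Summing, $\sum_{t=0}^{\infty}\cl{G}_t(x_t,u_t)\le M^\pi_\rho\big(N(\st{K}^\circ,r^*)+1\big)\le M^\pi_\rho\big(N(\st{K},r^*)+1\big)$.

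The hard part is the length bookkeeping of the middle step: one must set up the block splitting so that ``$d_H\le r^*$ on a block'' really does force the drift over every length-$M^\pi_\rho$ sub-window below $\rho$, which is where weak (rather than full) competitiveness costs the $T/(M^\pi_\rho+T)$ factor, and this has to be reconciled with needing windows of length $\ge M^\pi_\rho$ to invoke cost invariance. A secondary technicality is that $\theta_s\in\st{P}(\cl{D}_s)$ holds only up to the closure in \defref{app-def:Pt}, which is why the drift is kept strictly below $\rho$ so that a truly consistent parameter near $\theta_s$ still witnesses $(x_\cl{I},u_\cl{I})\in\cl{S}_\cl{I}[\rho;\cdot]$.
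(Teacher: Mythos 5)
Your proof is correct, and it is a genuine dual of the paper's argument rather than a reproduction of it. The paper partitions time by \emph{parameter drift} (a new interval starts whenever $d(\theta_t,\theta_{t_{k-1}})>\tfrac12\rho$), so that each interval is automatically a nominal $\rho$-trajectory carrying at most $M^\pi_\rho$ mistakes, and then has to work in the direction drift~$\Rightarrow$~Hausdorff shrinkage: it isolates the sub-collection $\mathbb{S}$ of intervals followed by a mistake, extracts by pigeonhole a length-$T$ sub-interval carrying drift at least $\tfrac12\rho\tfrac{T}{|\cl{I}_k|+T}$, and runs a case split on whether $x_{\ol{t}_k}\in\st{X}_{\ol{t}_k}$ before invoking the packing lemma. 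You partition by \emph{Hausdorff shrinkage} (a new block starts when $d_H(\st{P}(\cl{D}_{b_i}),\st{P}(\cl{D}_t))>r^*$), which makes the packing count of blocks immediate, and you use weak competitiveness in the opposite direction (small $d_H$ $\Rightarrow$ small drift), stitching $\lceil M^\pi_\rho/T\rceil$ length-$\le T$ sub-windows exactly as in \corref{coro:comp}; the factor $\lceil M^\pi_\rho/T\rceil\le(M^\pi_\rho+T)/T$ reproduces the paper's constant $r^*$ on the nose. Your sliding-window use of item (ii) of the cost-invariance corollary (every length-$M^\pi_\rho$ window inside a block is nominal, so $\cl{G}_s=0$ past the first $M^\pi_\rho$ steps) replaces the paper's $\mathbb{S}$-bookkeeping and its case analysis entirely, and the $\st{K}^\circ$ refinement falls out directly from the separation of your witness points from $\st{P}(\cl{D}_\infty)$. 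You also treat the closure in \defref{def:Pt} explicitly by keeping the drift at $\rho/2$, a point the paper's own proof glosses over when it asserts $f_k\in\mathbb{T}[b_k]$ for $b_k=\theta_{\ol{t}_k}$. The only thing the paper's route buys in exchange is that its drift-based partition is the same one used for the $\gamma$-competitive case of Theorem \ref{thm:red}, so the two proofs share their scaffolding.
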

\begin{proof} Denote $\bm{x},\bm{u}$ to be some fixed online trajectories and denote $\bm{\theta}$ as the corresponding parameter sequence selected by procedure $\sel$.
    The sequence $\bm{\theta}$ satisfies $\theta_t \in\st{P}(\cl{D}_t)$ and $\sum^{t_2}_{t=t_1+1}d(\theta_t,\theta_{t-1}) \leq \gamma d(\st{P}(\cl{D}_{t_2}),\st{P}(\cl{D}_{t_1}))$ for all $t_2-t_1\leq T$. For some time-step $\tau>0$, we derive bounds on the mistakes $\sum^{\tau}_{t=0}\cl{G}_t(x_t,u_t)$. Set $t_0 = 0$ and construct the index-sequence $t_0,t_1,t_2,\dots,t_N$ as follows:
    \begin{align}
        t_k&:= \left\{\begin{array}{ll}\min\left\{t\leq \tau\:\left|\: t > t_{k-1} \text{ and }d(\theta_{t},\theta_{t_{k-1}})>\tfrac{1}{2}\rho \right.\right\} &\text{, if }k>1\\
            0 & \text{, if }k=0  \end{array}\right. 
    \end{align}
    until for some $N$, the condition $t\leq \tau, t > t_{N}, d(\theta_{t},\theta_{t_{N}})>\tfrac{1}{2}\rho$ becomes infeasible and we terminate the construction.
    Define the intervals $\cl{I}_k:=[t_k,\ol{t}_k]$, where $\ol{t}_k:=t_{k+1}-1$ for $k<N$ and $\ol{t}_N = \tau$. 
     The intervals $\cl{I}_0,\dots,\cl{I}_N$ are a non-overlapping cover of the time-interval $[0,\tau]$: $$\bigcup\limits_{0\leq k\leq N}\cl{I}_k = [0,\tau],\quad \cl{I}_k\cap\cl{I}_{k-1}=\emptyset,\forall k: 1\leq k \leq N.$$
    Let $(a_0,\dots,a_N)$ and $(b_0,\dots,b_N)$  be the parameters selected at the start and end of each interval $\cl{I}_k$, respectively: $a_k := \theta_{t_k}$ and $b_k := \theta_{\ol{t}_k}$. Per construction, we know that 
    \begin{align}\label{eq:centertopar-2}
        d(a_k,\theta_{t})&\leq \tfrac{1}{2}\rho\text{ for all }t \in \cl{I}_k\\
        \label{eq:centertocenter-2} d(a_k,a_{k-1})&> \tfrac{1}{2}\rho\text{ for all }1 \leq k\leq N
    \end{align}
    Inequality \eqref{eq:centertopar} states that $d(a_k,b_{k}) \leq \tfrac{1}{2}\rho$ and implies via triangle inequality that for all $ t \in \cl{I}_k$ holds $$d(\theta_t,b_k)\leq d(\theta_t,a_k)+ d(a_k,b_k)\leq \rho.$$

    \noindent Since we picked $b_k=\theta_{\ol{t}_k}$ and the procedure $\sel$ assures $\theta_{\ol{t}_k}\in\st{P}(\cl{D}_{\ol{t}_k})$, it means that for some $f_k\in\mathbb{T}[b_k]$, the partial trajectory $(x_{\cl{I}_k},u_{\cl{I}_k})$ satisfies the following equations for the time-steps $t\in\cl{I}_k$:
    \begin{align}
     x_{t+1} = f'(t,x_t,u_t),\quad u_{t} = \pi[\theta_t](t,x_t)
    \end{align}
 We can therefore conclude that $(x_{\cl{I}_k},u_{\cl{I}_k})\in\cl{S}_{\cl{I}_k}[\rho;b_k]$. We apply \corref{app-def:pi-invariant} to conclude that \begin{align}\label{eq:firstmbound}\sum_{t\in\cl{I}_k} \cl{G}_t(x_t,u_t)\leq M^\pi_\rho
\end{align}
 for each $k\in\{0,\dots,N\}$. Now, define $\mathbb{S}$ as the following collection of intervals
 \begin{align}\label{eq:Scollection}
     \mathbb{S}:= \{ \cl{I}_k\:|\: \cl{G}_{t_{k+1}}(x_{t_{k+1}},u_{t_{k+1}})=1 \},
 \end{align}
i.e.: all intervals $\cl{I}_k$ where at the start of the \textit{next} interval $\cl{I}_{k+1}$ the cost is $1$. Combining this with the former bound \eqref{eq:firstmbound}, we can decompose the total mistake sum as
\begin{align}
    \notag \sum^{T}_{t=0} \cl{G}_t(x_t,u_t) &= \sum_{t\in\cl{I}_{0}} \cl{G}_t(x_t,u_t)+\sum_{\cl{I}_j \in \mathbb{S}}\sum_{t\in\cl{I}_{j+1}} \cl{G}_t(x_t,u_t) + \underbrace{\sum_{\cl{I}_j \notin \mathbb{S}}\sum_{t\in\cl{I}_{j+1}} \cl{G}_t(x_t,u_t)}_{0} \\
    \label{eq:costsum-2}&=\sum_{t\in\cl{I}_{0}} \cl{G}_t(x_t,u_t)+\sum_{\cl{I}_j \in \mathbb{S}}\sum_{t\in\cl{I}_{j+1}} \cl{G}_t(x_t,u_t) \leq M^\pi_\rho \left(|\mathbb{S}| + 1\right)
\end{align}
Notice that the last term $\sum_{\cl{I}_j \notin \mathbb{S}}\sum_{t\in\cl{I}_{j+1}} \cl{G}_t(x_t,u_t)$ in the first equation is zero because $\cl{I}_j\notin \mathbb{S}$ implies that the next interval $\cl{I}_{j+1}$ start with zero cost; due to the cost-invariance property \corref{app-def:pi-invariant} it follows that
$ \sum_{t\in\cl{I}_{j+1}} \cl{G}_t(x_t,u_t) = 0$. 
The remainder of the proof is concerned with bounding the cardinality of the collection $\mathbb{S}$.\\

\noindent \textbf{Bounding $|\mathbb{S}|$}: We know that for each $l$ in the range $1\leq l\leq |\cl{I}_k|$, there exists at least one sub-interval $\cl{I}_l'\subset \cl{I}_k,\:|\cl{I}_l'|=l$ of length $l$, such that
    \begin{align}\label{eq:subseqtau2}
        \sum_{t\in\cl{I}'} d(\theta_{t},\theta_{t+1}) > \tfrac{1}{2}\rho \tfrac{l}{(|\cl{I}_k|+l)}.
    \end{align}
    The above has to be true, since otherwise we would contradict \eqref{eq:centertocenter-2}:
    \begin{itemize}
        \item  Let $\cl{I}'_1,\dots, \cl{I}'_m$, $m=\lceil{|\cl{I}_k|}/{l}\rceil, |\cl{I}'_i|=l, \cl{I}'_i \subset \cl{I}_k$ be an overlapping cover of $\cl{I}_k$, then 
    \begin{align}
        d(a_k,a_{k+1}) &\leq \sum_{t\in\cl{I}_k}d(\theta_{t+1},\theta_{t})\leq \sum^{m}_{j=1}\sum_{t\in\cl{I}'_j}d(\theta_{t+1},\theta_{t})\leq \tfrac{1}{2}\rho\left\lceil\tfrac{|\cl{I}_k|}{l}\right\rceil \tfrac{l}{|\cl{I}_k|+l} \\
        & \leq \tfrac{1}{2}\rho\left(\tfrac{|\cl{I}_k|}{l}+1\right) \tfrac{l}{|\cl{I}_k|+l}=\tfrac{1}{2}\rho,\quad\quad\text{ (recall that }a_{k+1} = \theta_{\ol{t}_k+1}\text{)}
    \end{align}
which is a contradiction to \eqref{eq:centertocenter-2}
\end{itemize}
Hence, we can always pick a sequence of sub-intervals $[\tau^l_k,\ol{\tau}^l_k] = \cl{I}^{(l)}_k \subset \cl{I}_k$ (either of length $l$ or identical to $\cl{I}_k$ if $|\cl{I}_k|\leq l$) such that 
\begin{align}
    \sum_{t\in\cl{I}^{(l)}_k}d(\theta_{t}, \theta_{t+1}) > \tfrac{1}{2}\rho \tfrac{l}{|\cl{I}_k| + l}
\end{align}
Notice that if $|\cl{I}_k|\leq l$, we pick $\cl{I}^{(l)}_k=\cl{I}_k$, and therefore the above inequality is vacuously true since, $\sum_{t\in\cl{I}_k}d(\theta_{t}, \theta_{t+1}) \geq d(a_k,a_{k+1}) > \tfrac{1}{2}\rho \geq \tfrac{1}{2}\rho \tfrac{l}{|\cl{I}_k|+l}$.

Now, the $(\gamma,T)$-weak competitiveness property ensures that 
for all $k$ and all $t \geq \ol{\tau}_k+1$ holds: 
\begin{align}
    \notag &&\tfrac{1}{2}\rho \tfrac{T}{|\cl{I}_k| +T} &< \sum_{t\in\cl{I}^{(T)}_k}d(\theta_{t}, \theta_{t+1}) \leq \gamma d_H(\st{P}(\cl{D}_{{\tau}_k}),\st{P}(\cl{D}_{\ol{\tau}_k+1})) \leq \gamma d_H(\st{P}(\cl{D}_{{\tau}_k}),\st{P}(\cl{D}_{t}))\\
    &\Leftrightarrow& d_H(\st{P}(\cl{D}_{{\tau}_k}),\st{P}(\cl{D}_{t})) &> \tfrac{1}{2}\tfrac{\rho}{\gamma} \tfrac{T}{|\cl{I}_k| +T}. 
\end{align}
where $d_H(\st{P}(\cl{D}_{{\tau}_k}),\st{P}(\cl{D}_{\ol{\tau}_k+1})) \leq d_H(\st{P}(\cl{D}_{{\tau}_k}),\st{P}(\cl{D}_{t}))$ follows from nestedness. 
From now on, we use the abbreviation $\st{P}_t$ to refer to the sets $\st{P}(\cl{D}_t)$. 

 \noindent Recall the definition $\mathbb{S}:= \{ \cl{I}_k\:|\: \cl{G}_{t_{k+1}}(x_{t_{k+1}},u_{t_{k+1}})=1 \}$ and let $k_j$ denote the $j$-th interval that belongs to $\mathbb{S}$, i.e.: $\cl{I}_{k_j} \subset \mathbb{S}$. Now set $l = T$ and define $\st{S}_j$ as a subsequence of $\st{P}_1,\st{P}_2,\dots$ as follows:
 \begin{align}
    \st{S}_{j}:=\left\{ \begin{array}{cl}\st{P}_{\ol{t}_{k_j}} &\text{if }x_{\ol{t}_{k_j}} \in \st{X}_{\ol{t}_{k_j}}\\
        \st{P}_{\tau^{T}_{k_j}} &\text{if }x_{\ol{t}_{k_j}} \notin \st{X}_{\ol{t}_{k_j}} \end{array}\right.
\end{align}
We will show that this collection $P=\{\st{S}_1,\st{S}_2,\dots\}$ of sets $\st{S}_j$ is a $\tfrac{1}{2}\tfrac{\rho}{\gamma} \tfrac{T}{M^\pi_\rho+T}$ - separated set in the metric space $(2^\st{K},d_H)$ via the following inequality:
$$\forall j<i: d_H(\st{S}_j,\st{S}_i)> \tfrac{1}{2}\tfrac{\rho}{\gamma} \tfrac{T}{M^\pi_\rho+T}.$$
This is proven below:
\begin{itemize}[nosep]
 \item Recall $\sel$ is defined to always pick $\theta_t\in\st{P}(\cl{D}_t)$ and $\pi$ is $\rho$-uniformly robust and cost-invariant. Due to the $\sel$ property, there always exists a function $f'\in\mathbb{T}[\theta_{t_{k+1}}]$ such that $x_{t_{k+1}} = f'(\ol{t}_k,x_{\ol{t}_k},\pi[\theta_{\ol{t}_k}](\ol{t}_k,x_{\ol{t}_k}))$. On the other hand, because of the $\pi$ property, the statement $x_{t_{k+1}}\notin \st{X}_{t_{k+1}}$ implies that one of the following two has to hold at time $\ol{t}_k$:\\
 \noindent 1. Assume $x_{\ol{t}_k}\in \st{X}_{\ol{t}_k}$, then it has to hold that $d(\theta_{\ol{t}_k},\theta_{t_{k+1}})>\rho$. Notice due to $(\gamma,H)$-w.c. property, that $d(\theta_{\ol{t}_k},\theta_{t_{k+1}})\leq \gamma d_H(\st{P}_{\ol{t}_k},\st{P}_{t_{k+1}})$ which gives us $$d_H(\st{P}_{\ol{t}_k},\st{P}_{t_{k+1}})> \tfrac{1}{2}\tfrac{\rho}{\gamma} $$ 2. Assume $x_{\ol{t}_k} \notin \st{X}_{\ol{t}_k}$, then via \corref{app-def:pi-invariant}, it follows that $|\cl{I}_k|\leq M^\pi_\rho$, which then implies that $d_H(\st{P}_{\tau^T_k},\st{P}_{t_{k+1}}) > \tfrac{1}{2}\frac{\rho}{\gamma} \tfrac{T}{|\cl{I}_k|+T}\geq \tfrac{1}{2}\tfrac{\rho}{\gamma} \tfrac{T}{M^\pi_\rho+T}$. 
 \item Taking the minimum of both cases we can see that $ d_H(\st{S}_j,\st{P}_{t_{k_j+1}})> \tfrac{1}{2}\tfrac{\rho}{\gamma} \tfrac{T}{M^\pi_\rho+T}.$
 Due to nestedness, it holds for $i>j$ that $\st{S}_i\subset \st{P}_{t_{k_j+1}} \subset \st{S}_j$. Thus, it holds $d_H(\st{S}_j,\st{S}_i)\geq d_H(\st{S}_j,\st{P}_{t_{k_j+1}})$ and we arrive at the separation condition:
 $$\forall j<i:\: d_H(\st{S}_j,\st{S}_i)> \tfrac{1}{2}\tfrac{\rho}{\gamma} \tfrac{T}{M^\pi_\rho+T}.$$
\end{itemize}
We conclude from \lemref{lem:compactpack}, that $|\mathbb{S}| = |P|$ is bounded by the packing number $N(\st{K},\tfrac{1}{2}\tfrac{\rho}{\gamma} \tfrac{T}{M^\pi_\rho+T})$. Substituting into the bound \eqref{eq:costsum-2} and taking the limit $T\rightarrow \infty$, we get the total number of mistakes as:
\begin{align*}
    \sum^{\infty}_{t=0} \cl{G}_t(x_t,u_t) \leq M^\pi_\rho\left(N(\st{K},\tfrac{1}{2}\tfrac{\rho}{\gamma} \tfrac{T}{M^\pi_\rho+T})+1\right)
\end{align*}
\textbf{A tighter bound. }
We can define $\st{K}^\circ=\st{K} \setminus \mathrm{int}(\st{P}(\cl{D}_\infty))$ and $\st{S}^\circ_j = \st{S}_j \setminus \mathrm{int}(\st{P}(\cl{D}_\infty))$ and notice that $\st{S}^\circ_j$ is non-empty for all $j$: $\st{S}^\circ_j$ and $\st{P}(\cl{D}_\infty)$ are closed, so $\st{S}_j\subset \st{P}(\cl{D}_\infty)$ implies that $\st{S}^\circ_j $ contains at least the boundary of $\st{P}(\cl{D}_\infty)$. Moreover, we can verify that the corresponding collection $P^\circ=\{\st{S}^\circ_1,\st{S}^\circ_2,\dots\}$ of sets $\st{S}^o_j$ is still a $\tfrac{1}{2}\tfrac{\rho}{\gamma} \tfrac{H}{M^\pi_\rho+H}$-separated set in the compact metric space $(2^{\st{K}^\circ},d_H)$. Therefore we can improve the previous mistake guarantee and state the tighter inequality:
 \begin{align*}
    \sum^{\infty}_{t=0} \cl{G}_t(x_t,u_t) \leq M^\pi_\rho\left(N(\st{K}\setminus \mathrm{int}(\st{P}(\cl{D}_\infty)),\tfrac{1}{2}\tfrac{\rho}{\gamma} \tfrac{H}{M^\pi_\rho+H})+1\right)
\end{align*}

\end{proof}

\begin{thm*}
    Assume that procedure $\sel$ chases consistent models in $\partpl$ and is $\gamma$-competitive and that procedure $\pi$ is a uniformly $\rho$-robust, cost-invariant oracle for $\bm{\cl{G}}$. 
    Then, the total number of mistakes is guaranteed to be bounded above by:
    \begin{align*}
        \sum^{\infty}_{t=0}{\cl{G}}_t(x_t,u_t)  \leq M^\pi_\rho(2\tfrac{\gamma}{\rho}d_H(\st{K},\st{P}(\cl{D}_\infty))+1) \leq M^\pi_\rho(2\tfrac{\gamma}{\rho}\dm(\st{K})+1) 
    \end{align*}
\end{thm*}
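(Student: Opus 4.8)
The plan is to bound the truncated cost $\sum_{t=0}^{\tau}\cl{G}_t(x_t,u_t)$ by a quantity that does not depend on the horizon $\tau$, and then let $\tau\to\infty$. The mechanism is to cut $[0,\tau]$ greedily into consecutive windows on each of which the online parameter sequence $\bm{\theta}$ produced by $\sel$ stays within $\tfrac12\rho$ of its value at the left endpoint of the window; such a window will carry a genuine idealized oracle trajectory, so its cost is at most $M^\pi_\rho$ by the uniform-robustness corollary attached to \defref{app-def:pi-rhorobust}, and $\gamma$-competitiveness of $\sel$ bounds the number of windows in terms of $\dm(\st{K})$, or more precisely in terms of $d_H(\st{K},\st{P}(\cl{D}_\infty))$. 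Note that only uniform $\rho$-robustness of $\pi$ is actually used; unlike in \thmref{thm:redweak-app}, cost invariance plays no role in this bound.

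Concretely, I would set $t_0:=0$ and recursively $t_{k+1}:=\min\{\,t\le\tau:\ t>t_k,\ d(\theta_t,\theta_{t_k})>\tfrac12\rho\,\}$, terminating at index $N$ once this set becomes empty; then put $\cl{I}_k:=[t_k,\ol{t}_k]$ with $\ol{t}_k:=t_{k+1}-1$ for $k<N$ and $\ol{t}_N:=\tau$. By construction the $\cl{I}_k$ form a disjoint cover of $[0,\tau]$, we have $d(\theta_t,\theta_{t_k})\le\tfrac12\rho$ for all $t\in\cl{I}_k$, and $d(\theta_{t_k},\theta_{t_{k-1}})>\tfrac12\rho$ for $1\le k\le N$. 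Writing $b_k:=\theta_{\ol{t}_k}$, the selection property of $\sel$ gives $b_k\in\st{P}(\cl{D}_{\ol{t}_k})$, hence there is $f_k\in\mathbb{T}[b_k]$ consistent with $\cl{D}_{\ol{t}_k}$ and therefore with every transition $x_{t+1}=f_k(t,x_t,u_t)$ for $t\in\cl{I}_k$, $t<\ol{t}_k$; and since $\ol{t}_k\in\cl{I}_k$, the triangle inequality gives $d(\theta_t,b_k)\le d(\theta_t,\theta_{t_k})+d(\theta_{t_k},b_k)\le\tfrac12\rho+\tfrac12\rho=\rho$ for all $t\in\cl{I}_k$. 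Thus $(x_{\cl{I}_k},u_{\cl{I}_k})\in\cl{S}_{\cl{I}_k}[\rho;b_k]$, and the uniform-robustness corollary of \defref{app-def:pi-rhorobust} yields $\sum_{t\in\cl{I}_k}\cl{G}_t(x_t,u_t)\le M^\pi_\rho$ for each $k$.

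To count the windows I would observe that for $1\le k\le N$ the triangle inequality gives $\sum_{t=t_{k-1}+1}^{t_k}d(\theta_t,\theta_{t-1})\ge d(\theta_{t_k},\theta_{t_{k-1}})>\tfrac12\rho$; summing over $k$ and applying $\gamma$-competitiveness on the interval $[0,t_N]$ (using $\st{P}(\cl{D}_0)=\st{K}$, the $N=0$ case being trivial) gives $\tfrac12\rho N<\gamma\,d_H(\st{P}(\cl{D}_{t_N}),\st{K})$. By nestedness of the consistent sets (the corollary following \defref{app-def:Pt}), $\st{P}(\cl{D}_\infty)\subseteq\st{P}(\cl{D}_{t_N})\subseteq\st{K}$, so $d_H(\st{P}(\cl{D}_{t_N}),\st{K})\le d_H(\st{P}(\cl{D}_\infty),\st{K})\le\dm(\st{K})$; hence $N\le\tfrac{2\gamma}{\rho}d_H(\st{K},\st{P}(\cl{D}_\infty))$. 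Summing the per-window bound over the $N+1$ windows then gives
\[
\sum_{t=0}^{\tau}\cl{G}_t(x_t,u_t)\le(N+1)M^\pi_\rho\le M^\pi_\rho\Big(\tfrac{2\gamma}{\rho}d_H(\st{K},\st{P}(\cl{D}_\infty))+1\Big)\le M^\pi_\rho\Big(\tfrac{2\gamma}{\rho}\dm(\st{K})+1\Big),
\]
and since the right-hand side is independent of $\tau$, letting $\tau\to\infty$ concludes.

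The main obstacle is the window-level bookkeeping: the threshold $\tfrac12\rho$ must be chosen exactly so that (i) $f_k$, which is only known to be consistent with the whole past data set $\cl{D}_{\ol{t}_k}$, is in particular consistent with the sub-trajectory on $\cl{I}_k$, and (ii) the two halves $d(\theta_t,\theta_{t_k})\le\tfrac12\rho$ and $d(\theta_{t_k},b_k)\le\tfrac12\rho$ combine to precisely $d(\theta_t,b_k)\le\rho$, which is what membership in $\cl{S}_{\cl{I}_k}[\rho;b_k]$ in \defref{app-def:pi-rhorobust} demands. A secondary subtlety is that the partition itself depends on $\tau$, so one has to note that the derived bound on $N$ (hence on the truncated cost) is uniform in $\tau$ before passing to the limit.
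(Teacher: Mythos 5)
Your proof is correct and follows essentially the same route as the paper: the greedy partition of $[0,\tau]$ at threshold $\tfrac12\rho$, the triangle inequality placing each window's trajectory in $\cl{S}_{\cl{I}_k}[\rho;b_k]$ with $b_k=\theta_{\ol{t}_k}$, and the count of windows via $\gamma$-competitiveness and nestedness are exactly the paper's argument. Your side remark that only uniform $\rho$-robustness (not cost invariance) is needed here is also accurate and consistent with the main-text statement of \thmref{thm:red}\ref{it:conv4}.
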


\begin{proof}
    The parameter sequence $\bm{\theta}$ provided by $\sel$ satisfies $\theta_t \in\st{P}(\cl{D}_t)$, $\forall t$ and $\sum^{T}_{t=1}d(\theta_t,\theta_{t-1})\leq \gamma d_H(\st{K}, \st{P}(\cl{D}_T))$.  Set $t_0 = 0$ and construct the index-sequence $t_0,t_1,t_2,\dots,t_N$ as follows:
    \begin{align}
        t_k&:= \left\{\begin{array}{ll}\min\left\{t\leq T\:\left|\: t > t_{k-1} \text{ and }d(\theta_{t},\theta_{t_{k-1}})>\tfrac{1}{2}\rho \right.\right\} &\text{, if }k>1\\
            0 & \text{, if }k=0  \end{array}\right. 
    \end{align}
    until for some $N$, the condition $t\leq T, t > t_{N}, d(\theta_{t},\theta_{t_{N}})>\tfrac{1}{2}\rho$ becomes infeasible and we terminate the construction.
    Define the intervals $\cl{I}_k:=[t_k,\ol{t}_k]$, where $\ol{t}_k:=t_{k+1}-1$ for $k<N$ and $\ol{t}_N = T$. 
     The intervals $\cl{I}_0,\dots,\cl{I}_N$ are a non-overlapping cover of the time-interval $[0,T]$: $$\bigcup\limits_{0\leq k\leq N}\cl{I}_k = [0,T],\quad \cl{I}_k\cap\cl{I}_{k-1}=\emptyset,\forall k: 1\leq k \leq N.$$
    Let $(a_0,a_1,\dots, a_N)$ and $(b_0,b_1,\dots, b_N)$  be the parameters selected at the start and end of each interval $\cl{I}_k$, respectively: $a_k := \theta_{t_k}$ and $b_k := \theta_{\ol{t}_k}$. Per construction, we know that 
    \begin{align}\label{eq:centertopar}
        d(a_k,\theta_{t})&\leq \tfrac{1}{2}\rho\text{ for all }t \in \cl{I}_k\\
        \label{eq:centertocenter} d(a_k,a_{k-1})&> \tfrac{1}{2}\rho\text{ for all }\: 1 \leq k \leq N.
    \end{align}
    Inequality \eqref{eq:centertopar} states that $d(a_k,b_{k}) \leq \tfrac{1}{2}\rho$ and implies via triangle inequality that for all $ t \in \cl{I}_k$ holds $$d(\theta_t,b_k)\leq d(\theta_t,a_k)+d(a_k,b_k) \leq \rho.$$
    
    \noindent Since we picked $b_k=\theta_{\ol{t}_k}$ and the procedure $\sel$ assures $\theta_{\ol{t}_k}\in\st{P}(\cl{D}_{\ol{t}_k})$, it means that for some $f'\in\mathbb{T}[b_k]$, the partial trajectory $(x_{\cl{I}_k},u_{\cl{I}_k})$ satisfies the following equations for the time-steps $t\in\cl{I}_k$:
    \begin{align}
     x_{t+1} = f'(t,x_t,u_t),\quad u_{t} = \pi[\theta_t](t,x_t).
    \end{align}
 We can therefore conclude that $(x_{\cl{I}_k},u_{\cl{I}_k})\in\cl{S}_{\cl{I}_k}[\rho;b_k]$. Now, \corref{app-def:pi-invariant} applies and we conclude that $\sum_{t\in\cl{I}_k} \cl{G}_t(x_t,u_t)\leq M^\pi_\rho$. Therefore we can bound the total mistakes as:
\begin{align}\label{eq:costsum}
    \sum^{T}_{t=0} \cl{G}_t(x_t,u_t) = \sum^{N}_{k=0} \sum_{j\in\cl{I}_k} \cl{G}_j(x_j,u_j) \leq  M^\pi_\rho(N+1) .
\end{align}
We can now bound $N$ using the $\gamma$-competitiveness chasing property. Recalling \eqref{eq:centertocenter}, we obtain the chain of inequalities
$$ \tfrac{1}{2}\rho N\leq \sum^{N}_{k=1} d(a_{k},a_{k-1})\leq \sum^{N-1}_{k=0}\sum_{t\in\cl{I}_k} d(\theta_{t},\theta_{t+1}) \leq \sum^{T}_{t=1}d(\theta_t,\theta_{t-1}) \leq \gamma d_H(\st{K},\st{P}(\cl{D}_T)).$$
which lead to the bound $N\leq 2\tfrac{\gamma}{\rho}d_H(\st{K},\st{P}(\cl{D}_T))$. We substitute this into \eqref{eq:costsum} to obtain the desired bound on the total number of mistakes:
\begin{align}
    \notag \sum^{T}_{t=0} \cl{G}_t(x_t,u_t) &\leq M^\pi_\rho(2\tfrac{\gamma}{\rho}d_H(\st{K},\st{P}(\cl{D}_T))+1)\\
    &\leq M^\pi_\rho(2\tfrac{\gamma}{\rho}d_H(\st{K},\st{P}(\cl{D}_\infty))+1) \leq M^\pi_\rho(2\tfrac{\gamma}{\rho}\dm(\st{K})+1)
\end{align}
We can take the limit $T\rightarrow \infty$ and arrive at the desired result.
\end{proof}
\subsection{Worst-case bounds on the state norm}

\begin{thm}\label{thm:transient-app}
    Assume that for procedure $\pi:\st{K}\mapsto\cl{C}$ there are constants $\alpha,\beta>0$, such that $\forall \theta,\theta'\in\st{K}, x\in\cl{X},f \in \mathbb{T}[\theta]:\:\|f(t,x,\pi[\theta'](t,x))\| \leq \alpha d(\theta,\theta')\|x\| + \beta.$ The following state bound guarantees hold: 
    \begin{enumerate}[label=(\roman*)]
        \item If $\sel$ is a $\gamma$-competitive CMC algorithm, then:
         \begin{align*}
            \forall t:\quad \|x_t\| \leq e^{\alpha \gamma \phi(\st{K})}\left(e^{-t} \|x_0\| + \beta \frac{e}{e -1}\right).
        \end{align*}
        \item If $\sel$ is a $(\gamma,T)$-weakly competitive CMC algorithm, then:
        \begin{align*}
            \|\bm{x}\|_{\infty} \leq \inf\limits_{0<\mu<1} \left(1+(\alpha\phi(K))^{n^*}\right)\max\{\tfrac{\beta}{1-\mu}, \|x_0\|\} + \beta\sum\limits^{n^*}_{k=0} (\alpha\phi(K))^k
        \end{align*}
        where $n^*= N(\st{K},\tfrac{\mu}{\alpha\gamma})$ and $\phi(\st{K})$ denotes the diameter of $\st{K}$. 
    \end{enumerate}
\end{thm}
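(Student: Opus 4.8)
The plan is to reduce both claims to one scalar recursion for $a_t := \|x_t\|$. By \algoref{alg:ApiselOCO} the online parameter $\theta_t = \sel[\cl{D}_t]$ is a consistent parameter for $\cl{D}_t$, and $\cl{D}_t$ records the one-step transition that produced $x_t$ from $(x_{t-1},u_{t-1})$ with $u_{t-1} = \pi[\theta_{t-1}](t-1,x_{t-1})$; hence there is $f\in\mathbb{T}[\theta_t]$ realizing that transition, and the single-step robustness hypothesis, applied with $\theta = \theta_t$ and $\theta' = \theta_{t-1}$ (the case where $\theta_t$ lies only in the closure being handled by a limiting argument), yields
\begin{equation*}
a_t \;\le\; \alpha\, d(\theta_t,\theta_{t-1})\, a_{t-1} + \beta \quad (t\ge 1),\qquad\text{with}\qquad d(\theta_t,\theta_{t-1}) \le \dm(\st{K}) = \phi(\st{K}).
\end{equation*}
Everything after this is bookkeeping on this inequality using the chasing property of $\sel$; write $\delta_t := d(\theta_t,\theta_{t-1})$.

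For part (i) I would unroll the recursion to
\begin{equation*}
a_t \;\le\; \Big(\prod_{j=1}^{t}\alpha\delta_j\Big) a_0 \;+\; \beta \sum_{m=0}^{t-1}\;\prod_{j=t-m+1}^{t}\alpha\delta_j ,
\end{equation*}
and bound each block separately. For a block $\{t-m+1,\dots,t\}$, $\gamma$-competitiveness (\defref{def:Selprops}, property (C)) applied to the interval $[t-m,t]$ gives $\sum_{j=t-m+1}^{t}\delta_j \le \gamma\, d_H(\st{P}(\cl{D}_t),\st{P}(\cl{D}_{t-m})) \le \gamma\phi(\st{K})$. Combining AM--GM with the elementary bound $(c/m)^m \le e^{\,c-m}$ for $c,m>0$ (which follows by maximizing $m\log c - m\log m + m$ in $m$) gives $\prod_{j=t-m+1}^{t}\alpha\delta_j \le (\alpha\gamma\phi(\st{K})/m)^m \le e^{\alpha\gamma\phi(\st{K})}e^{-m}$ for $m\ge 1$, the $m=0$ term being $1 \le e^{\alpha\gamma\phi(\st{K})}$, and likewise the leading product is $\le e^{\alpha\gamma\phi(\st{K})}e^{-t}$. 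Summing the geometric series $\sum_{m\ge 0}e^{-m} = e/(e-1)$ produces exactly $a_t \le e^{\alpha\gamma\phi(\st{K})}\big(e^{-t}\|x_0\| + \beta\,\tfrac{e}{e-1}\big)$.

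For part (ii), fix $\mu\in(0,1)$ and call a step $t\ge 1$ a \emph{jump} if $\alpha\delta_t > \mu$. Since $(\gamma,T)$-weak competitiveness with $T\ge 1$ implies $(\gamma,1)$-weak competitiveness, a jump forces $d_H(\st{P}(\cl{D}_{t-1}),\st{P}(\cl{D}_t)) > \mu/(\alpha\gamma)$; as the consistent sets are nested, the family $\{\st{P}(\cl{D}_{t-1}) : t \text{ a jump}\}$ is $\tfrac{\mu}{\alpha\gamma}$-separated in $(2^{\st{K}},d_H)$, so \lemref{lem:compactpack} bounds the number of jumps by $n^* = N(\st{K},\tfrac{\mu}{\alpha\gamma})$. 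On a non-jump step $a_t \le \mu a_{t-1} + \beta \le \max\{a_{t-1},\tfrac{\beta}{1-\mu}\}$, so non-jump steps never raise $a_t$ above $\max\{\text{current value},\tfrac{\beta}{1-\mu}\}$; on a jump step $a_t \le q\, a_{t-1} + \beta$ with $q := \alpha\phi(\st{K})$. With $V := \max\{\|x_0\|,\tfrac{\beta}{1-\mu}\}$, an induction over the at-most-$n^*$ jumps gives $a_t \le q^{j}V + \beta\sum_{i=0}^{j-1}q^i$ after the $j$-th jump (if $q<\mu$ there are no jumps at all, so the $\tfrac{\beta}{1-\mu}$ floor never interferes), and using $q^j \le 1+q^{n^*}$ and $\sum_{i=0}^{j-1}q^i \le \sum_{k=0}^{n^*}q^k$ for $0\le j\le n^*$ yields $\|\bm{x}\|_\infty \le (1+(\alpha\phi(\st{K}))^{n^*})\,V + \beta\sum_{k=0}^{n^*}(\alpha\phi(\st{K}))^k$; taking the infimum over $\mu\in(0,1)$ gives the stated bound. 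I expect the main obstacle to be the bookkeeping in part (ii): keeping the $\tfrac{\beta}{1-\mu}$ contribution of the contracting steps out of the geometric factor, and invoking \lemref{lem:compactpack} correctly through the nestedness of the consistent sets to count jumps. Part (i) is essentially mechanical once the one-step recursion and the inequality $(c/m)^m\le e^{c-m}$ are in hand.
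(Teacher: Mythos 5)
Your proposal is correct and follows essentially the same route as the paper: the same one-step recursion $\|x_t\|\leq \alpha\, d(\theta_t,\theta_{t-1})\|x_{t-1}\|+\beta$, the same unrolling with an elementary exponential bound on the products for part (i) (your AM--GM plus $(c/m)^m\leq e^{c-m}$ is interchangeable with the paper's $1+x\leq e^x$ in Lemma \ref{lem:aux1}), and the same jump/non-jump decomposition counted by the packing number via Lemma \ref{lem:compactpack} for part (ii). The only substantive difference is that you carry out in full the final induction over the at-most-$n^*$ jumps (including the check that the $\tfrac{\beta}{1-\mu}$ floor cannot interfere when $\alpha\phi(\st{K})<\mu$), a step the paper delegates to the cited reference \cite{ho2019robust}.
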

\begin{proof}
    Part 1: In each time-step $t$, it holds  $x_{t+1} = f(t,x_t,\pi[t,\theta_t])$ for some $f \in \mathbb{T}(\theta_{t+1})$. Therefore, the following inequality holds at each time-step:
    \begin{align}
    \|x_{t+1}\| = \|f(t,x_t,\pi[\theta_t](t,x_t))\| \leq \alpha d(\theta_{t+1},\theta_t)\|x_t\| + \beta
    \end{align}
    We apply \lemref{lem:ssup} with the substitution $s_t:=\|x_t\|$, $\delta_t := \alpha d(\theta_{t+1},\theta_t)$ and $c:=\beta$, to obtain
    \begin{align}
        \|x_t\| \leq e^{\alpha L}\left(e^{-t} \|x_0\| + \beta \frac{e}{e -1}\right)
    \end{align}
    \noindent Part 2: We follow the proof technique used in the main result of \cite{ho2019robust} to prove boundedness. Given a closed-loop trajectory $\bm{x}$, at each time-step $t$ holds $x_{t+1} = f(t,x_t,\pi[\theta_t](t,x_t))$ for some $f \in \mathbb{T}(\theta_{t+1})$. Take an arbitrary time step $T$. Define $\mathbb{I}$ the set of all indeces $k<T$ for which holds $\|x_{k+1}\| > \mu \|x_k\| + \beta$. Notice that for each $k\notin\mathbb{I}$ holds $\|x_{k+1}\| \leq \mu \|x_k\| + \beta$ while for $k\in\mathbb{I}$ we have at least the inequality $\|x_{k+1}\|\leq \alpha \mathrm{diam}(\st{K})\|x_k\| +\beta$. Now, for each $k\in\mathbb{I}$ holds
    $$\mu \|x_k\| + \beta <\|x_{k+1}\|=\|f(k,x_k,\pi[\theta_k](k,x_k))\| \leq \alpha d(\theta_{k+1},\theta_k)\|x_k\|+\beta$$ 
    which leads to $d(\theta_{k+1},\theta_k)>\tfrac{\mu}{\alpha}$. Now, the $(\gamma,T)$ weak competitive property ensures that 
    $d(\theta_{k} ,\theta_{k+1}) \leq \gamma d_H(\st{P}(\cl{D}_k),\st{P}(\cl{D}_{k+1}))$. We can therefore conclude that:
    \begin{align}
        \tfrac{\mu}{\alpha\gamma} <\tfrac{1}{\gamma}d(\theta_{k},\theta_{k+1}) \leq d_H(\st{P}(\cl{D}_k),\st{P}(\cl{D}_{j})),\quad \text{for }j>k
    \end{align}
    Hence, $\{\st{P}(\cl{D}_k)\:|\: k\in\mathbb{I}\}$ is a $\tfrac{\mu}{\alpha\gamma}$-separated set in $\st{K}$. Therefore $|\mathbb{I}|\leq N(\st{K},\tfrac{\mu}{\alpha\gamma})$. Recall again that for each $k\notin\mathbb{I}$ holds $\|x_{k+1}\| \leq \mu \|x_k\| + \beta$ while for $k\in\mathbb{I}$ it holds $\|x_{k+1}\|\leq \alpha \dm(\st{K})\|x_k\| +\beta$. Following the same arguments as in the appendix of \cite{ho2019robust}, we obtain the presented $\|\anon\|_{\infty}$-bound on $\bm{x}$.
    \end{proof}
    \begin{lem}\label{lem:ssup}
    Let $\bm{s} = (s_0, s_1, \dots)$, $\bm{\delta} = (\delta_0, \delta_1, \dots)$ be non-negative scalar sequences such that $s_{k+1} \leq \delta_k s_k + c$, with $c\geq 0$ and $\sum^{\infty}_{t=0}\delta_t \leq L $. Then $s_t$ is bounded by:
    $$s_t \leq e^{L}\left(e^{-t}s_0+
    c\tfrac{e}{e-1}\right).$$
    \end{lem}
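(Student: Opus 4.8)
The plan is to unroll the recursion and then apply a single tangent-line inequality to convert the additive budget on $\sum\delta_t$ into a multiplicative bound on the resulting products. First I would prove by induction on $t$ that
\begin{align*}
s_t \le \Big(\prod_{j=0}^{t-1}\delta_j\Big)s_0 + c\sum_{i=0}^{t-1}\prod_{j=i+1}^{t-1}\delta_j,
\end{align*}
with the convention that an empty product equals $1$; the induction step is just one application of $s_{t+1}\le\delta_t s_t+c$ followed by a reindexing.

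The crux is to bound the products using only the hypothesis $\sum_{t\ge0}\delta_t\le L$. Here I would invoke the elementary inequality $x\le e^{x-1}$, valid for every real $x$ (it is the tangent line at $x=1$ of the convex map $x\mapsto e^{x-1}$), applied to each $\delta_j\ge0$. Applying it factorwise,
\begin{align*}
\prod_{j=i+1}^{t-1}\delta_j \le \prod_{j=i+1}^{t-1}e^{\delta_j-1} = \exp\!\Big(\sum_{j=i+1}^{t-1}\delta_j-(t-1-i)\Big) \le e^{L}\,e^{-(t-1-i)},
\end{align*}
and likewise $\prod_{j=0}^{t-1}\delta_j\le e^{L}e^{-t}$.

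Substituting these into the unrolled inequality gives $s_t \le e^{L}e^{-t}s_0 + c\,e^{L}\sum_{k=0}^{t-1}e^{-k}$, and bounding the partial geometric series by $\sum_{k=0}^{\infty}e^{-k}=\tfrac{1}{1-e^{-1}}=\tfrac{e}{e-1}$ yields exactly $s_t\le e^{L}\big(e^{-t}s_0+c\tfrac{e}{e-1}\big)$. The only points needing a word of care are the degenerate cases $t=0$ and $i=t-1$, where the empty products equal $1$ and the stated bound still holds because $e^{L}\ge1$ and all terms are non-negative. I do not expect a genuine obstacle: the entire content is the observation that $x\le e^{x-1}$ is the right device to trade the additive constraint on $\sum\delta_j$ for a product bound carrying a geometric decay factor $e^{-(t-1-i)}$, which is precisely what makes the sum over $i$ converge.
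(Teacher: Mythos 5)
Your proposal is correct and follows essentially the same route as the paper: unroll the recursion (the paper's "comparison lemma" step), bound each product via $x\le e^{x-1}$ (the paper's Lemma \ref{lem:aux1}, stated as $1+x\le e^x$) to obtain the factor $e^{L}e^{-(t-1-i)}$, and sum the resulting geometric series to $\tfrac{e}{e-1}$. The indexing and degenerate-case handling are all sound, so there is nothing to add.
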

    \begin{proof}
    Using comparison lemma and \lemref{lem:aux1} we can bound $s_t$ as 
    \begin{align}s_t \leq \prod^{t-1}_{k=0}\delta_k s_0 + c \left(1+ \sum^{t-1}_{j=1} \prod^{t-1}_{k=j} \delta_k\right)  &\leq e^{-t}e^Ls_0 + c\left(1+\sum^{t-1}_{j=1} e^{-j}\right)e^L \\
        &\leq e^{-t}e^Ls_0 + ce^L\sum^{\infty}_{j=0} e^{-j} \leq e^{-t}e^Ls_0+
        c\frac{e^{L+1}}{e-1}
    \end{align}
    \end{proof}
    \begin{lem}\label{lem:aux1}
    Let $\bm{\delta} = (\delta_0,\delta_1,\dots)$ be a non-negative scalar sequence such that $\sum^{\infty}_{t=0} \delta_t \leq L$, then $\prod^{t-1}_{j=0} \delta_j \leq e^{-t}e^L$
    \end{lem}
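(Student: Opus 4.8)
The plan is to reduce the statement to the elementary scalar inequality $x \le e^{x-1}$, valid for every real $x$ (it is the tangent-line bound at $x=1$ for the convex function $e^{x-1}$, equivalently a rearrangement of $\log x \le x-1$). First I would record this inequality and note that, since each $\delta_j \ge 0$, applying it with $x=\delta_j$ gives $\delta_j \le e^{\delta_j - 1}$ for every index $j$.

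Next I would multiply these $t$ inequalities. All factors involved are non-negative, so the product is monotone in each factor and
\[
\prod_{j=0}^{t-1}\delta_j \;\le\; \prod_{j=0}^{t-1} e^{\delta_j - 1} \;=\; \exp\!\Big(\textstyle\sum_{j=0}^{t-1}\delta_j - t\Big).
\]
Then I would use non-negativity of the sequence together with the hypothesis $\sum_{t=0}^{\infty}\delta_t \le L$ to bound the partial sum by the full sum, $\sum_{j=0}^{t-1}\delta_j \le \sum_{j=0}^{\infty}\delta_j \le L$, and substitute into the exponent to obtain $\prod_{j=0}^{t-1}\delta_j \le \exp(L-t) = e^{-t}e^{L}$, which is exactly the claim. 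The degenerate cases need no separate treatment: for $t=0$ the empty product $1 \le e^{L}$ holds, and if some $\delta_j=0$ the left-hand side is $0 \le e^{-t}e^{L}$ — and in fact $x \le e^{x-1}$ is still valid at $x=0$, so the argument above goes through verbatim.

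I do not expect any real obstacle here. The only points deserving a moment's care are justifying the pointwise bound $x\le e^{x-1}$ and observing that termwise multiplication of inequalities between non-negative reals preserves the inequality direction; everything else is a single substitution. This lemma then feeds directly into \lemref{lem:ssup}.
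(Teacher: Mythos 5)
Your proof is correct and follows essentially the same route as the paper's: the paper applies $1+x\le e^x$ to write $\delta_j = 1+(\delta_j-1)\le e^{\delta_j-1}$, multiplies, and bounds the partial sum by $L$, which is exactly your argument with the tangent-line inequality stated in the equivalent form $x\le e^{x-1}$. No gaps.
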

    \begin{proof}
    Recall the basic fact $1+x \leq e^x$. Then 
    \begin{align*}
        \prod^{t-1}_{j=0} \delta_j = \prod^{t-1}_{j=0} (1+ (\delta_j-1)) \leq \prod^{t-1}_{j=0} \exp(\delta_j-1) = \exp\left(\sum^{t-1}_{j=0}(\delta_j-1)\right) = \exp(L-t) =e^{-t}e^{L}
    \end{align*}
\end{proof}
\subsubsection{Mistake guarantees with locally robust oracles}
\begin{thm*}[Corollary of Thm. \ref{thm:red}]
Consider the setting and assumptions of Thm.\ref{thm:red} and Thm.\ref{thm:redweak}, but relax the oracle robustness requirements to corresponding local versions and enforce the additional oracle assumption stated in \thmref{thm:transient}.

\noindent Then all guarantees of \thmref{thm:red} \ref{it:conv1},\ref{it:conv4} and \thmref{thm:redweak} \ref{it:conv2},\ref{it:conv3} still hold, if we replace $M^\pi_\rho$ in \thmref{thm:red} \ref{it:conv4} and \thmref{thm:redweak} \ref{it:conv3} respectively by $M^\pi_\rho(\gamma_\infty)$ and $M^\pi_\rho(\gamma^w_\infty)$ with the constants:
\begin{align*}
    \gamma_\infty&=e^{\alpha \gamma \phi (\st{K})}\left(\|x_0\| + \beta \frac{e}{e -1}\right)\\
    \gamma^w_\infty &= \inf\limits_{0<\mu<1} \left(1+(\alpha\phi(K))^{n^*}\right)\max\{\tfrac{\beta}{1-\mu}, \|x_0\|\} + \beta\sum\limits^{n^*}_{k=0} (\alpha\phi(K))^k 
\end{align*}
where $n^*= N(\st{K},\tfrac{\mu}{\alpha\gamma})$ and $\phi(\st{K})$ denotes the diameter of $\st{K}$. 
\end{thm*}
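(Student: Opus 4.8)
The plan is to deduce this corollary by running Theorem~\ref{thm:transient-app} first to pin down a single, time-independent bound on the norm of every online state, and then re-running the proofs of Theorem~\ref{thm:red-app} and Theorem~\ref{thm:redweak-app} essentially verbatim, with that bound plugged into the one place where \emph{global} (as opposed to local) robustness of $\pi$ is used. Concretely: under the added assumption that $\pi$ is $(\alpha,\beta)$-single step robust, Theorem~\ref{thm:transient-app} gives $\sup_t\|x_t\|\leq\gamma_\infty$ when $\sel$ is $\gamma$-competitive, and $\|\bm{x}\|_\infty\leq\gamma^w_\infty$ when $\sel$ is $(\gamma,T)$-weakly competitive, with $\gamma_\infty,\gamma^w_\infty$ the explicit constants in the statement. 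Crucially these constants are built only from $\alpha,\beta,\gamma$, the diameter $\phi(\st{K})$, the packing numbers $N(\st{K},\cdot)$ and $\|x_0\|$; they do not reference $M^\pi_\rho$ or $T$, so no circular dependence arises below.

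Next I would record the elementary monotonicity fact that $\gamma\mapsto m^\pi_\rho(\gamma;\theta)$ is non-decreasing for each fixed $\rho,\theta$: increasing $\gamma$ only enlarges the family of trajectories $(x_\cl{I},u_\cl{I})\in\cl{S}_\cl{I}[\rho;\theta]$ with $\|x_{\min\cl{I}}\|\leq\gamma$ over which the supremum in Definition~\ref{app-def:pi-rhorobust} is taken, hence so is $\gamma\mapsto M^\pi_\rho(\gamma)=\sup_{\theta\in\st{K}}m^\pi_\rho(\gamma;\theta)$. Therefore for any interval $\cl{I}$ with $t_0:=\min\cl{I}$ one has $M^\pi_\rho(\|x_{t_0}\|)\leq M^\pi_\rho(\gamma_\infty)$ (resp.\ $\leq M^\pi_\rho(\gamma^w_\infty)$), a fixed finite number.

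The core of the argument is then a careful inspection of the existing proofs. In the proof of Theorem~\ref{thm:red-app}\ref{it:conv4-app}, uniform $\rho$-robustness enters exactly once: on each constructed subinterval $\cl{I}_k$ one has $(x_{\cl{I}_k},u_{\cl{I}_k})\in\cl{S}_{\cl{I}_k}[\rho;b_k]$, and the corollary to Definition~\ref{app-def:pi-rhorobust} (for the cost-invariant version, the corollary to Definition~\ref{app-def:pi-invariant}) yields $\sum_{t\in\cl{I}_k}\cl{G}_t(x_t,u_t)\leq M^\pi_\rho$. With only \emph{local} uniform $\rho$-robustness the same corollaries instead give $\sum_{t\in\cl{I}_k}\cl{G}_t(x_t,u_t)\leq M^\pi_\rho(\|x_{\min\cl{I}_k}\|)$, which by the previous two paragraphs is $\leq M^\pi_\rho(\gamma_\infty)$; note also that the cost-propagation property \ref{it:coroinv1} of the cost-invariance corollary uses only cost-invariance and is unaffected, while \ref{it:coroinv2}--\ref{it:coroinv4} use robustness solely through this per-interval bound and so localize the same way. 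Substituting the fixed constant $M^\pi_\rho(\gamma_\infty)$ for $M^\pi_\rho$ throughout reproduces the bound of Theorem~\ref{thm:red-app}\ref{it:conv4-app} with $M^\pi_\rho\to M^\pi_\rho(\gamma_\infty)$, the required $\gamma$-competitive state bound being exactly what Theorem~\ref{thm:transient-app} provides. For Theorem~\ref{thm:redweak-app}\ref{it:conv3-app} the same substitution applies, with two bookkeeping changes: the window length is taken as $T\geq M^\pi_\rho(\gamma^w_\infty)$ and the separation radius becomes $r^*=\tfrac12\tfrac{\rho}{\gamma}\tfrac{T}{M^\pi_\rho(\gamma^w_\infty)+T}$, after which the $\tfrac{\rho}{\gamma}$-separation of the family $\{\st{S}_j\}$ and the packing bound via Lemma~\ref{lem:compactpack} go through unchanged. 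Finally, for the asymptotic parts \ref{it:conv1-app} and \ref{it:conv2-app} no state bound is even needed: in \ref{it:conv1-app} the tail $(x_\cl{I},u_\cl{I})$, $\cl{I}=[T,\infty)$, lies in $\cl{S}_\cl{I}[\rho;\theta_\infty]$ and $\|x_T\|$ is some fixed finite number, so $\sum_{t\geq T}\cl{G}_t(x_t,u_t)\leq m^\pi_\rho(\|x_T\|;\theta_\infty)<\infty$ by local $\rho$-robustness; the proof of \ref{it:conv2-app} is identical after replacing ``$T\geq M^\pi_\rho$'' by ``$T\geq M^\pi_\rho(\gamma^w_\infty)$''.

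I expect the only real subtlety to be confirming that robustness of $\pi$ is invoked \emph{only} through the per-interval mistake bound supplied by the two corollaries — so that swapping $M^\pi_\rho$ for the constant $M^\pi_\rho(\gamma_\infty)$ or $M^\pi_\rho(\gamma^w_\infty)$ is harmless everywhere — together with verifying that these constants are well defined without looping back to $M^\pi_\rho$ or $T$; both points are handled by the first two steps above, so the remainder is bookkeeping.
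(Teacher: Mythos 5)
Your proposal is correct and follows exactly the route the paper takes: its proof is a one-sentence instruction to combine Theorem \ref{thm:transient} with the corollary of Definition \ref{app-def:pi-rhorobust} and rerun the proofs of Theorems \ref{thm:red} and \ref{thm:redweak} with $M^\pi_\rho$ replaced by $M^\pi_\rho(\gamma_\infty)$ resp.\ $M^\pi_\rho(\gamma^w_\infty)$. Your write-up is in fact more careful than the paper's, since it makes explicit the monotonicity of $\gamma\mapsto M^\pi_\rho(\gamma)$, the absence of circular dependence in the constants, and the localization of each use of robustness (including the separation radius $r^*$ and the window length $T$ in the weakly competitive case).
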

\begin{proof}
    The results are obtained by combining \thmref{thm:transient} with Corollary of Def.\ref{app-def:pi-rhorobust} and repeating the proofs of each part of \thmref{thm:red}, however this time, replacing $M^\pi_\rho$ by $M^\pi_\rho(\gamma_\infty)$ and substituting $\gamma_\infty$ with the corresponding bounds of \thmref{thm:transient}. 
\end{proof}

\section{Additional Discussion on Related Work}
\label{sec:app_related}

\emph{Online learning of optimal control for linear systems.} Many recent learning approaches for control of dynamical systems have focused on the setting of linear optimal control: One is given a linear system and the control objective is to minimize a specified cost functional. To relate our problem setting to other approaches in this field, we can view our problem setting as an instance of optimal control where we restrict the cost-function to be $\{0,1\}$-valued.

Recent learning and control approaches have focused on the case of Linear Quadratic Regulator (LQR) \cite{fiechter1997pac, abbasi2011regret, dean2017sample, dean2018regret, cohen2018online}, or linear dynamical system with convex costs \cite{agarwal2019online, agarwal2019logarithmic, hazan2019nonstochastic}. Our work is instead suitable as well for the nonlinear control setting. In addition, even when restricted to the linear system setting, recent line of work on online learning for control differ from our approach in the following aspects:

\begin{itemize}[noitemsep]

    \item Performance Criteria: We focus on bounding the total cost $\sum^{\infty}_{t=0}\cl{G}_t(x_t,u_t)$ as defined in section \ref{sec:intro} of the main paper. Our notion of control objective is natural to define in control applications e.g., most popular robotic goals can be formulated as driving the systems towards a desirable set or trajectories. This differs from, but is not incompatible with, the cost metric formulation that is often seen in optimal control and online learning for control work. Specifically, previous effort on learning LQR has been on improving the regret bound of the learning algorithm \cite{dean2017sample, dean2018regret, abbasi2011regret,abbasi2018regret, hazan2019nonstochastic}. Bounding the regret on the average cost, which is natural for LQR, is not sufficient to guarantee finite mistakes in our problem setting. In section \ref{sec:regret_vs_stability}, we discuss counterexamples which discuss the relationship between finite mistakes, sublinear regret and asymptotic guarantees and show that finite mistake guarantees imply sublinear regret, however, sublinear regret does not imply finite mistakes. 
    \item Approach: Our proposed approach does not depend on accurate online system identification, which is the focus of several recent work in learning for LQR \cite{fiechter1997pac, dean2017sample,hazan2019nonstochastic,cohen2018online}. As we consider parametric uncertainty, it is plausible to also adopt system identification approach for the non-linear control settings. However, online system identification with arbitrarily small error is known to be very challenging. As shown by \cite{dahleh1993sample}, the sample complexity for identifying linear systems under bounded adversarial noises can be exponential in the worst case. 
    \item Assumptions about parameter uncertainty: Some previous work in linear systems \cite{cohen2018online,dean2018regret, hazan2019nonstochastic} assume knowledge of a stabilizing controller $\pi_{\mathrm{safe}}:\cl{X}\mapsto\cl{U}$ for the true unknown system parameter $\theta^*$. In our setting, we do not require such an assumption but merely that for each possible parameter $\theta\in\st{K}$ one can find a robust policy $\pi[\theta]$ which stabilizes the small uncertainty model $\mathbb{T}[\theta]\subset \cl{F}$.
\end{itemize}

\emph{Robust Adaptive Nonlinear Control.} Naturally, our problem setting is of great interest to the control community, and have had a relatively long history \cite{polycarpou1993robust, yao1995robust, liu2009adaptive, backstepkrstic, ioannou2006adaptive}. Yet, most of traditional adaptive control approaches can not be applied to the general problem setting we consider without making restrictive assumptions. As an example, in contrast to most adaptive control methods, our framework applies to non-feedback linearizable nonlinear system (see also discussion in \secref{subsec:adaptivenonlinearchallenge}). Furthermore, many adaptive control techniques can not build on top of methods from other areas of control, like robust control theory, but rather propose separate control algorithms for each problem setting. Also, robust stability analysis and thorough empirical validation is for most methods largely unavailable. In fact, most relevant empirical results are only presented for arguably much simpler settings than the cart-pole swing-up problem, which is considered in this work. In addition, we highlight the two methodological distinctions:
\begin{itemize}
    \item Our proposed technique takes a significantly different perspective from traditional adaptive control approach. We provide a modular framework, which allows to combine robust control tools with online learning algorithms to provide desired guarantees online.
     We unify the treatment of both uncertain system parameters and unknown disturbance via the construction of confidence sets of candidate systems that are consistent with the historical collected observations. The estimation of such consistent sets is also easily attainable for most robotic systems and allows for nonasymptotic convergence guarantee. Among relevant adaptive control literature, perhaps most closely related to ours is Multi-Model Adaptive Control (MMAC) from \cite{anderson2001multiple}. The MMAC principle needs to run a high-dimensional Multi-Estimation routine online, which requires the design of nonlinear observers (with the Matching and Detectability property - see \cite{hespanha2003overcoming}) for a sufficiently dense covering set of the parameter space. A general construction of such a family is only shown for linear systems (See \cite{hespanha2003overcoming} and references therein) and it is not clear whether designing a tractable Multi-Estimator for the cart-pole system is possible.
\end{itemize}

\section{Supplementary Details - Empirical Validation} \label{sec:app_experiment} 

We will demonstrate that the presented method can learn very challenging control tasks efficiently from limited amount of data online. We will demonstrate this on the problem of learning to swing up an inverted pendulum on a cart, also often referred to as the cart-pole system. Yet, in contrast to prior empirical results on this system, we will consider a much more challenging setting by incorporating additional constraints representing necessary restrictions that arise when interacting with real physical systems.
\begin{figure}[t]
    \vspace{-0.1in}
        \centering
        \includegraphics[width = 0.51\textwidth]{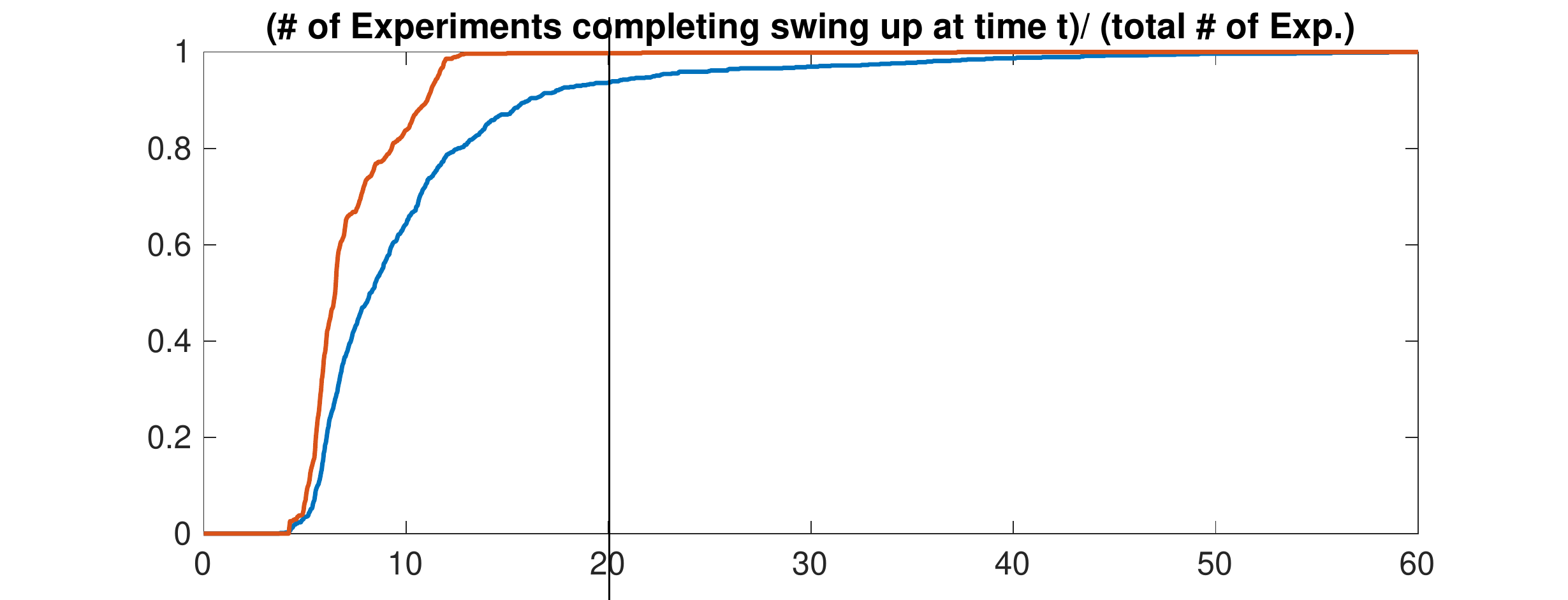}
        \includegraphics[width = 0.4\textwidth]{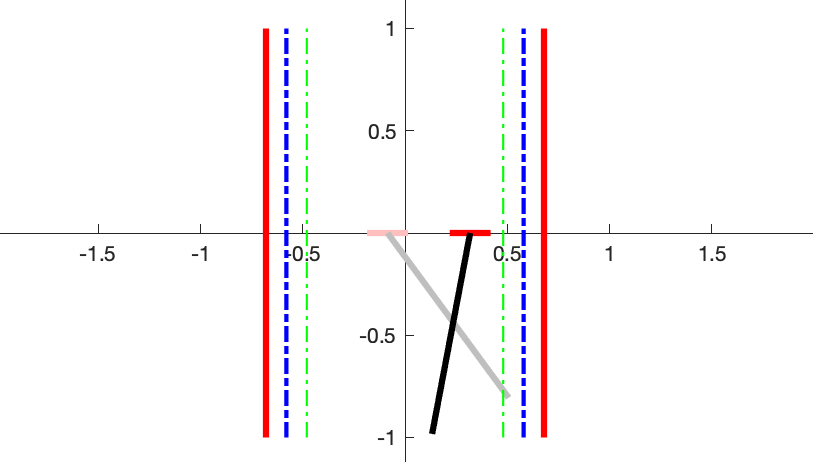}
        \vspace{-0.15in}
        \caption{{(Left) Percentage of runs that completed swing up goal before time $t$: perfect candidate controller $\pi[\theta^*]$ (red) vs. online algorithm (blue). (Right) Experiment setup.}}
        \label{fig:onlinevsoffline}
        \vspace{-0.1in}
    \end{figure}
\subsection{A hard nonlinear control task to learn: Swinging up the cart-pole system} 
Here, we will discuss the cart-pole model and the control task we will be tasking $\cl{A}_\pi(\texttt{SEL})$ to learn. In addition, we will highlight why this particular control task is a very challenging one. 

\textbf{A realistic model of the cart-pole. } 

We will model the cart-pole system in a way that is consistent with how a learning agent $\cl{A}$ would be deployed on a real dynamical system $\cl{M}$: The state $x(\tau)$ of the dynamical system $\cl{M}$ evolves in continuous-time $\tau$ according to its continuous-time system equations (usually in the form of an ODE); With respect to some fixed sampling $T_s$, the agent $\cl{A}$ receives observations at discrete-time instances $\tau_k = k T_s$ and decides actions that are kept fixed for time-window $[k T_s,(k+1) T_s]$ between sampling instances. This is depicted below in \figref{fig:realinteract}.
\begin{figure}[h]
    \begin{center}
        \fbox{\parbox{0.95\textwidth}{
            Given a discrete-time agent $\cl{A}$ and dynamical system $\cl{M}$ with state $x$ and input $u$ with continuous model $\frac{dx}{d \tau}(\tau)=g(x(\tau),u(\tau))$ in continuous time $\tau$ \\

            \textbf{At time instances} $\tau = 0, T_s, \dots, kT_s, \dots$:\\
            \phantom{AAAA}$\cl{A}$ collects observation $x(kT_s)$\\
            \phantom{AAAA}$\cl{A}$ selects action $u_k$ that is frozen for the time window $[k T_s,(k+1)T_s]$\\
            \phantom{AAAA}  $x([k T_s,(k+1)T_s]) \leftarrow $ system $\cl{M}$ evolves $\frac{dx}{d \tau}(\tau)=g(x(\tau),u(\tau))$ from $x(kT_s)$\\
            \phantom{AAAA}\phantom{AAAA}\phantom{AAAA} with input held constant $u([k T_s,(k+1)T_s]) = u_k $.
            }}
    \end{center}
    \vspace{-0.1in}
    \caption{The usual interaction protocol with real-world dynamical systems }
    \label{fig:realinteract}
    \vspace{-0.1in}
\end{figure}

The cart-pole system is governed by the following nonlinear differential equations: 
\begin{subequations}
\label{eq:cartpole}
\begin{align}
      (m_c + m_p)\ddot{x}_c + m_p l \ddot\theta_p \cos\theta_p - m_p l \dot\theta^2_p \sin\theta_p - b_x \dot{x}_c= f_u \\
      m_p l \ddot{x}_c \cos\theta_p + m_p l^2 \ddot\theta_p + m_p g l \sin\theta_p - b_{\theta}\dot{\theta} = 0
\end{align}
\end{subequations}
We are using the notation in \cite{underactuated} and refer to the same reference for detailed derivations. The variables $x_c$ and $\theta_p$ stand for cart position and pole angle in counterclockwise direction and $f_u$ represents the force exerted on the cart. Furthermore, $\theta =0$ denotes the downward position. The uncertain parameters are ($m_c$, $m_p$, $l$, $b_x$, $b_\theta$) and represent cart and pole mass, pole length, friction coefficients $b_x$ and $b_{\theta}$, respectively. $g=9.81$ is the gravity constant.

Furthermore, \eqref{eq:cartpole} can be converted into the input affine standard form 
\begin{align}\label{eq:inputaffine}
\dot{x} = F(x)+g(x)u
\end{align}
where $x = [x_c,\theta_p,\dot{x}_c,\dot{\theta}_p]^T$, $u = f_u$, (see \cite{underactuated} for description of $F(x)$ and $g(x)$). 
We will choose a \textbf{sampling-time} $\tau_s = 0.02 \mathrm{sec}$ ($1/\tau_s= 50\mathrm{Hz}$) that would be easy to realize with current technology. Under the above interaction protocol \figref{fig:realinteract}, we can abstract the transitions between discrete-time instances $x(k T_s), u(k T_s) \rightarrow x((k+1)T_s)$ as a discrete-time system. Thus, we can equivalently represent the system \eqref{eq:inputaffine} at sampling times through the discrete-time system 
\begin{align}\label{eq:dtphi}
&x_{t} = \phi_{T_s}(x_{t-1},u_{t-1}), \quad \phi_{T_s}(x,u):= \alpha(T_s),\text{ s.t. : } \dot{\alpha} = F(\alpha,u), \alpha(0) = x,
\end{align}
where we will denote $x_t := x(t T_s)$ and $u_t := u(t T_s)$ ($t \in \mathbb{N}$) to be samples of the continuous-time signals $x(\tau)$, $u(\tau)$ at time $t \tau_s$. 
We assume that \textbf{only noisy measurements} $\hat{x}_t = x_t + n_t$ are obtainable, where we assume $n_t$ to be bounded noise. Furthermore, we will \textbf{approximate knowledge of the derivative} $\dot{x}_t$ simply as $(\hat{x}_t-\hat{x}_{t-1})/T_s$. Knowledge of the exact transient function $\phi_{T_s}(\anon,\anon)$ is usually not avaiable and approximations are necessary. In simulations, we will \textbf{accurately approximate} $\phi_{T_s}(\anon,\anon)$ by using a Runge-Kutta method of order 4 with a fix step-size chosen an order of magnitude smaller than $T_s$. This is in contrast to for example OpenAI Gym \cite{openaigym}, a popular implementation of the cart-pole environment which approximates \eqref{eq:dtphi} using only the forward euler method and does not model the real physical cart-pole system accurately.


\textbf{The control task. } We will consider the \ul{swing up control task}: Starting off with $x(0)=0$, $\theta(0)=\pi$, i.e. the pole in the \textbf{downward} position and cart position in the center, choose the force $u_t$ to bring the pole to the upward position $\theta=0$ and cart position at $x=0$ and balance the system there. More specifically, by balancing we mean to keep the state $x$ within a target set  $\cl{X}_{\cl{G}}=[-\ep_x,\ep_x]\times[-\ep_\theta,\ep_\theta]\times[-\ep_{\dot{x}},\ep_{\dot{x}}]\times[-\ep_{\dot{\theta}},\ep_{\dot{\theta}}]$ of allowed tolerance, i.e. reaching and staying close to the upright cart-pole position.

\subsubsection{A challenging nonlinear control problem }
Despite being argueably one of the most popular examples of a nonlinear control problem, the cart-pole system belongs to a class of nonlinear systems that is particularly hard to control: It is \textbf{nonlinear}, \textbf{non-state feedback linearizable} and \textbf{non-minimum phase}. As a consequence, the following standard nonlinear control approaches can \textbf{not} be applied:
\begin{enumerate}[nosep]
  \item State-feedback linearization \cite{slotine1991applied, khalil2002nonlinear, sastry2013nonlinear}
  \item Input-output linearization \cite{khalil2002nonlinear, sastry2013nonlinear}: Due to non-minimum phasedness, it's hard to find a "stable" output
  \item Linearization: No linear controller can swing up and balance the cart-pole.
  \item Backstepping \cite{backstepkrstic}: The dynamics do not conform to the so-called "strict-feedback" form. 
\end{enumerate}
\begin{rem}
A related control task is the "balancing cart pole" task in the RL literature (See OpenAIGym as an example \cite{openaigym}). The "balancing task" is concerned with balancing the cart-pole system, \textbf{but} allowing the cart-pole system to start from the \textbf{upward} position. This is a much easier task. As an example of a distinguishing factor: a linear LQR controller can accomplish the balancing task, while no linear controller can perform the swingup task. 
\end{rem}
\subsubsection{A challenging adaptive nonlinear control problem }\label{subsec:adaptivenonlinearchallenge}
The previous control challenges hold, even if the model of the cart-pole is perfectly known. If we consider the online learning problem of the swingup problem, these difficulties are only exacerbated: Most standard nonlinear adaptive control techniques can not be directly applied to the problem. As an example, the following popular methods can not be used: MRAC \cite{ioannou2006adaptive}, \cite{ioannou2012robust}, nonlinear adaptive back-stepping \cite{backstepkrstic}, adaptive-sliding mode control \cite{slotine1991applied}, computed torque based methods like in \cite{OrtegaSpong89}. 
On the other hand, the more recent approach of multi-model adaptive control (MMAC) \cite{anderson2001multiple} in principle covers the cart-pole swingup problem, yet it is not clear how to exactly instatiate the approach presented in \cite{anderson2001multiple}, as one requires a design of a suiting nonlinear observer and switching logic for the cart-pole which is not clear how to do.

The cartpole system can be also described to be a so-called \textit{underactuated} robotic system. This system class often inherits the same difficulties as the one mentioned above and this recent work \cite{moore2014adaptive} \cite{nguyen2015adaptive} is discussing the difficulties and some recent progress towards adaptive control for underactuated systems. Nevertheless, both methods do not offer solutions for the adaptive swingup problem of the cart-pole.

To the best of our efforts, we have not been able to find any empirical results in the adaptive control literature that show how to learn to swingup the cart-pole system with control-theoretic guarantees.

\subsubsection{A challenging reinforcement learning problem }
To characterize the difficulty of the cart-pole swing-up task as a learning problem, we tried to estimate its sampling complexity in the context of reinforcement learning and compare it to other common control tasks often used in reinforcement learning benchmark. 
As a proxy for the sample complexity, we tuned a state-of-the-art reinforcement learning algorithm to learn the cart-pole swing up task in as few as possible samples.
We setup the swingup task as an episodic RL problem with $40\,s$ episode length, $T_s = 0.02$ and used a smooth sigmoid based dense reward function from \cite{tassa2018deepmind}. As a simulation environment we used the original OpenAIGym \cite{openaigym} cart-pole environment and modified it to fit the swingup problem we consider. 

As a representative of a state-of-the-art RL method, we used the widely successful PPO \cite{schulman2017proximal} and obtained the learning curve presented in \figref{fig:ppo2}. The learning curve presented in \figref{fig:ppo2} is the best run after approximately 200 iterations of hyperparameter tuning and modifications and it shows that despite being given a dense reward function, PPO2 needed  over $3*10^6$ time-steps, to learn how to swingup and over $4.5*10^6$ time-steps, to find an optimal policy that swings up the fastest. At a sampling-time of $T_s = 0.02\,s$, the corresponding needed interaction time with the system is $16.5$ hours and $25$ hours, respectively. In comparison, the traditional cart-pole balancing task (balancing the cart-pole where each episode starts from the \textbf{top} position) only requires PPO2 around $50000$ time-steps to learn. In the original paper \cite{schulman2017proximal}, PPO has been tested on other common RL benchmark tasks \textit{HalfCheetah-v1}, \textit{Hopper-v1}, \textit{InvertedDoublePendulum-v1}, \textit{InvertedPendulum-v1}, \textit{Reacher-v1}, \textit{Swimmer-v1}, \textit{Walker2d-v1} and has been shown to learn good policies in much fewer than $10^6$ samples. In comparison to those RL tasks, our cart-pole swing-up tasks requires (after hyperparameter tuning and optimization) PPO to take significantly more samples before it can learn a good policy that achieves the swing-up task. This is showing that although the cart-pole swing up task is arguably much less complex than other tasks in openAI gym. It is nevertheless one of the hardest to learn among these control tasks.

\begin{figure*}[h]
    \includegraphics[width=\textwidth]{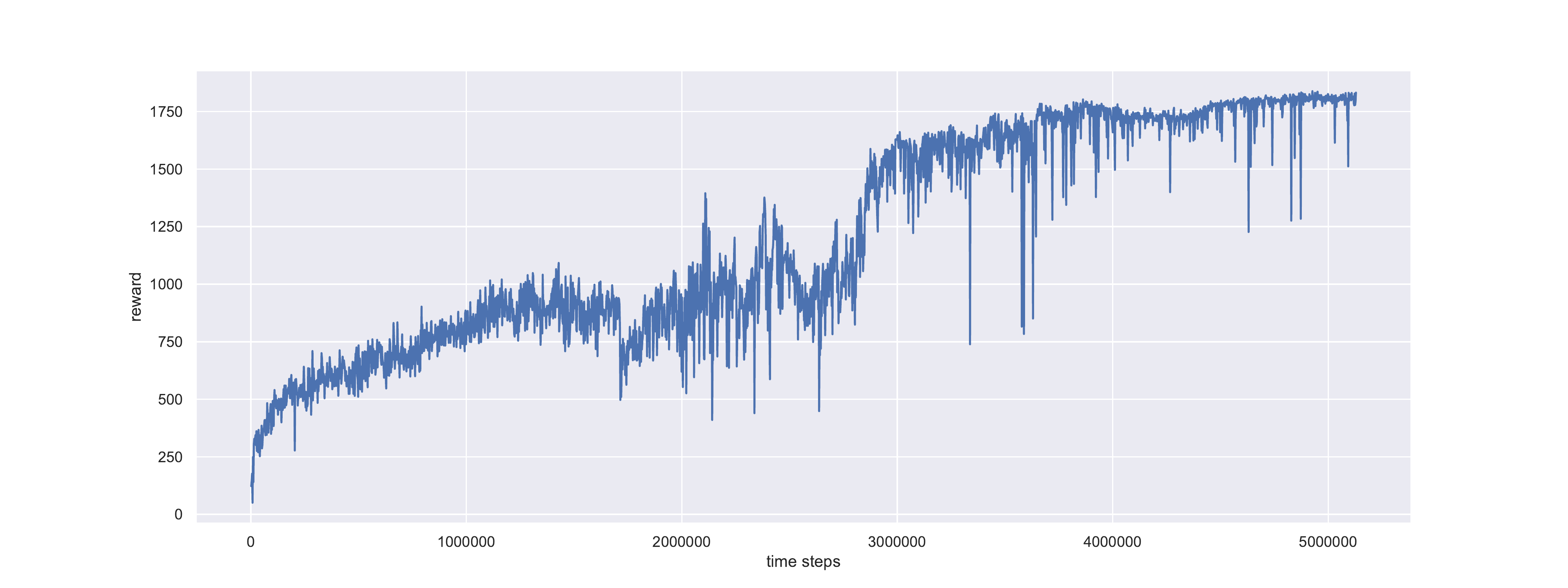}
  \vspace{-0.1in}
   \caption{ \textit{Learning curve of PPO2 for cartpole swingup:} The max episodic reward is $2000$ and a satisfactory swingup performance can be obtain starting at around a reward of $1500$.}
    \label{fig:ppo2}
   \vspace{-.1in}
  \end{figure*}


\noindent \textbf{Hyperparameters and necessary modifications.} We used the PPO2 implementation of stable baselines \cite{hill2018stable} and arrived at the hyperparameters presented in Table \ref{tab:ppo}. In addition, we had to perform additional modifications to obtain the result in \figref{fig:ppo2}:
\begin{enumerate}[nosep]
    \vspace{-0.1in}
    \item Instead of the observations $x = [x_c, \theta_p, \dot{x}_c,\dot{\theta}_p]$, we give PPO the observation vector $x' = [x_c, \cos(\theta_p), \sin(\theta_p), \dot{x}_c,\dot{\theta}_p]$ 
    \item We set $\beta_1 = 0$ of the ADAM optimizer in tensorflow
    \item We scheduled the learning rate based on the episodic reward according to Table \ref{tab:pposchedule}.
\end{enumerate}

\begin{table}[t]
    \label{tab:pposchedule}
    \caption{Learning rate schedule based on episodic reward}
    \begin{center}
    \begin{small}
    \begin{tabular}{lccccccc}
    \toprule
    episodic reward   & 700& 1300& 1400& 1500& 1600& 1800& 2001 \\
    \midrule
    learning rate   &1e-3& 1e-3& 1e-4& 1e-4& 1e-4& 1e-4& 5e-5 \\
    \bottomrule
    \end{tabular}
    \end{small}
    \end{center}
    \vskip -0.1in
\end{table}

\begin{table}[t]
    \label{tab:ppo}
    \caption{PPO hyperparameters}
    \label{par-table}
    \vskip 0.15in
    \begin{center}
    \begin{small}
    \begin{sc}
    \begin{tabular}{lc}
    \toprule
    Hyperparameter & Value  \\
    \midrule
    number of steps in epoch   & 8192  \\
    gamma & 0.999\\
    clip-range    & 0.2  \\
    num of optimization-epochs    & 10\\
    GAE-lambda    & 0.995 \\
    num of mini-batches    & 8 \\
   entropy coefficient   & 1e-6 \\
    \bottomrule
    \end{tabular}
    \end{sc}
    \end{small}
    \end{center}
    \vskip -0.1in
\end{table}

\subsection{Our validation setting: cart-pole swing-up problem subject to safety constraints}  
We test our design approach in a harder, more realistic setting of the cart-pole swingup task, to demonstrate its potential for applications with real physical systems. We enforce additional constraints on the oracle policies such as: limited noisy sensing, limited actuation range and possible interventions by backup safety controllers, etc.. In real world control applications, these difficulties have to be addressed during control design. It is therefore expected that during oracle design, in particular oracle policy design, the same issues have to be addressed. Constraining the oracle policies in this way makes online learning (especially in the single trajectory setting we consider) a harder problem, since excitation is potentially hindered. It is therefore important to verify how these constraints affect the overall performance of our learning and control meta-approach. \\

\noindent However, our experiments will show that the $\cl{A}_\pi(\sel)$ approach ensures fast learning despite imposing the following harsh constraints on the oracle policies:
\begin{enumerate}[nosep]
    \item \textbf{safety}: the oracle policies are designed to keep the cart position $x$ in an interval $[x_{min},x_{max}]$ for all time $t$ (and $x_{max}-x_{min}$ is closed to pole length). In our setting we choose $x_{max} = 0.6m$. These constraints poses a challenge for the swing up task, as the range of motion is severely limited, especially if we consider the setting $x_{max}<l$.
    \item safety controller override: The oracle policy is potentially interrupted by a safety controller (Control barrier function based) which overrides any control actions which are considered unsafe (i.e.: position to close to boundary).
     \item The oracle policies are required to use only limited acceleration $\ddot{x}\in[\ddot{x}_{min},\ddot{x}_{max}]$. In our setting we choose $x_{max} = 0.5 g$. The constraint $\ddot{x} \leq 0.5 g$ is a restrictive condition, as it can be shown in \cite{aastrom2000swinging} that this acceleration is not enough to swing-up the pole in one or two swings. 
     \item no system reset is allowed, 
     \item oracle policies can only output a force $F$ in the range of $[-200N,200N]$
\end{enumerate}
We added the above restrictions to model the following real world control challanges:
\begin{enumerate}[nosep]
  \item \textit{Safety concerns impose heavy restrictions on online exploration}: In order to guarantee safety during operation, in most cases \cite{dulac2019challenges}, strong restrictions must be imposed on the actions of the learning agent. Most commonly, safety constraints can be formulated in terms of hard and soft constraints enforced on the state $x_t$ and input $u_t$ of the system. To enforce these safety specifications online, usually one deploys a form of safety controller that is overseeing the proposed actions of the learning agent and overrides them, if they are deemed unsafe w.r.t. the safety specifications. A common approach is to implement the safety controller using control barrier functions \cite{SafetyCBFAmes2017}, \cite{GurrietAmes2018}. 
   This necessary practice causes a major challenge particularly for online learning, since exploration is potentially greatly hindered. 
  \item \textit{System resets are costly or impossible}: In many robotics application, like autonomous systems, online learning is very desirable, since resets are costly to realize or sheer impossible. Example: UAVs, autonomous cars, robots in close contact with humans. 
  \item \textit{Online data is limited and noisy} 
\end{enumerate} 

\textbf{Large system uncertainty. } We consider the initial parameter uncertainty described by the bounds in Table. \eqref{par-table}. 

Next, we describe the oracle we used to instantiate our approach $\cl{A}_\pi(\texttt{SEL}).$

\subsection{Modelbased oracle for cart-pole swing up}
Recall the nonlinear dynamics of cart pole system:
\begin{align}\label{eq:cartpole}
    (M+m)\ddot{x}-ml\ddot{\phi}\cos(\phi)+ml\dot{\phi}^2\sin(\phi)-b_x \dot{x} &= F\\
    l\ddot{\phi}-g \sin(\phi) -b_{\phi} \dot{\phi}&=\ddot{x}\cos(\phi)\nonumber
\end{align}
Let $x$ and $\dot{x}$ be the position and velocity of the cart and $\phi$, $\dot{\phi}$ the angle and angular velocity of the pole. $F$ is the force onto the cart pole and serves as our control input to the system. Furthermore, let $\bar{a}$ be the maximal cart acceleration allowed, and $\bar{d}$ be the maximum distance the cart is allowed to move from the center. \\

As a first step, we will perform so called partial feedback linearization. Let $F_d(\ddot{x},\dot{x},\phi,\dot{\phi},\dot{x})$ be the force $F$ we need to apply at time $t$ in order to achieve a desired cart acceleration of $\ddot{x}_d$. Multiply the second equation of \eqref{eq:cartpole} by $ml\cos(\phi)$ and add it to the first, to see that $F_d$ has to be chosen as:
\begin{align}
\label{eq:Fd}
    F_d(\ddot{x}_d,\dot{x},\phi,\dot{\phi},\dot{x})=(M+m\sin(\phi)^2)\ddot{x}_d - mg\cos(\phi)\sin(\phi)+ml\dot{\phi}^2\sin(\phi)-b_x \dot{x} 
\end{align}
By choosing $F=F_d(\ddot{x}_d,\dot{x},\phi,\dot{\phi},\dot{x})$, we can now treat the desired acceleration $\ddot{x}_d$ as our new control input. With respect to our new input $\ddot{x}_d$, we can simplify the original equations \eqref{eq:cartpole} to
\begin{align}\label{eq:reduce}
\ddot{x} &= \ddot{x}_d\\
l\ddot{\phi}-g \sin(\phi) -b_{\phi} \dot{\phi}&=\ddot{x}_d\cos(\phi)
\end{align}

\noindent The swingup controller consists now of three separate control laws that are later combined. 

\textbf{Around up-right position: static linear LQR controller}
If the pole has small enough kinetic energy and is close to the upright position, we simply choose $\ddot{x}_d$ to be LQR-state feedback controller based on the system \eqref{eq:reduce} linearized around the equilibrium position $x=0,\dot{x}=0,\phi = 0,\dot{\phi} = 0$. The policy then takes the form
$$\ddot{x}_{d,LQR} = - K_{LQR}(\theta)z$$

\textbf{Swing up-controller: energy-based controller.} 
The Swing-Up controller is based on an elegant energy-based approach by \cite{aastrom2000swinging}, in which we simply choose $\ddot{x}_d$ to control the total normalized energy \begin{align}\label{eq:E}
    E(\phi,\dot{\phi})= \frac{l}{2g}\dot{\phi}^2 + cos(\phi)
\end{align} of the pole. In depth derivation can be found in \cite{aastrom2000swinging} and $\ddot{x}_d$ takes the form:
\begin{align}
    \ddot{x}_{d,swing} = -\mathbf{Sat}_{\bar{a}}\left(\frac{1}{2}\gamma |\cos(\phi)|(E(\phi,\dot{\phi})-1)\mathrm{sign}(\dot{\phi}\cos(\phi))\right)
\end{align}
where $\mathbf{Sat}_{\bar{a}}$ is the saturation function which saturates at the max specified acceleration $\bar{a}$.

\textbf{Wrapping a safety controller.} 
As part of our oracle policy, we also use a control barrier function controller that prevents to trigger the safety policy. We do this simply by internally overriding our swing-up $\ddot{x}_{d,swing}$ or balancing $\ddot{x}_{d,LQR}$ terms, if we get too close to the boundary of $[-x_{max},x_{max}]$. To this end, define the $B(x,\dot{x})$ as the barrier function $$B(x,\dot{x}) = \frac{1}{2 \bar{a}} \dot{x}|\dot{x}| + x$$
and define $\ddot{\phi}_{max}:=\bar{a}g/l*\sin(30^\circ)$.

\textbf{Full controller, including safety override. }
The full controller can be described below as:
\begin{algorithm}[h]
   \caption{oracle policy $\pi[\theta]$ under potential safety policy $\pi_{\mathrm{safe}}$ override}
   \label{alg:robad_app}
\begin{algorithmic}
   \State {\bfseries Input:} $z = [x,\phi,\dot{x},\dot{\phi}]$, parameters $\theta := [M,m,l,b_x,b_\theta]$, $\epsilon_{safe}$
   \State {\bfseries Output:} $F$
   \If{$|x| < \bar{d}-\epsilon_{safe}$}
    \If{$|\dot{\phi}^2/\ddot{\phi}_{max}| <60^\circ$ \bfseries{and} $\cos(30^\circ/\ddot{\phi}_{max}+\phi\mathrm{sign}(\dot{\phi})) > \cos(30^\circ)$ \bfseries{and} $|-K_{LQR}(\theta)z|\leq \bar{a} $ }
        \State $\ddot{x}_d = -K_{LQR}(\theta)z $
    \Else
    \State $\ddot{x}_{d} = -\mathbf{Sat}_{\bar{a}}[\frac{1}{2}\gamma |\cos(\phi)|(E(\phi,\dot{\phi})-1)\mathrm{sign}(\dot{\phi}\cos(\phi))]$
    \EndIf
    \State $\ddot{x}_{d,back} = -\bar{a} \, \mathrm{sign}(\dot{x})$
    \State $\lambda = \frac{|B(x,\dot{x})|}{\bar{d}-\epsilon_{safe}} $
    \If{ $B(x,\dot{x})\geq 0$}
    \State $\ddot{x}_d \leftarrow  (1-\lambda^2)\ddot{x}_d + \lambda^2\min\{\ddot{x}_d,\ddot{x}_{d,back} \} $
    \Else 
    \State $\ddot{x}_d \leftarrow  (1-\lambda^2)\ddot{x}_d + \lambda^2\max\{\ddot{x}_d,\ddot{x}_{d,back} \} $
    \EndIf
    \State $F = F_d(\ddot{x}_d,z)$
    \Else
     \State $F = \pi_{\mathrm{safety}}(z)$
    \EndIf
\end{algorithmic}
\end{algorithm}

The controller, switches to an LQR if the system is close to the upright position and otherwise defaults to the swing up controller that brings the pendulum to the right energy level. A correction is performed to the previous control action depending on the barrier-function value $|B(x,\dot{x})|$. As $|B(x,\dot{x})|$ gets closer to the boundary $\bar{d}-\epsilon_{safe}$ the controller prioritizes to safety and overwrites the previous planned control action. If $x$ exceeds the buffer $\bar{d}-\epsilon_{safe}$, then a safe policy is being called, that bring the cart position back to the region $[-\bar{d}+\epsilon_{safe},\bar{d}-\epsilon_{safe}]$.

\subsection{Selection process $\texttt{SEL}$}
We apply the approach presented in the main paper \secref{sec:instantiation} to obtain polytopes of consistent parameters of $\st{P}_t$ for the lumped parameters $p = [m_c + m_p, m_pl, b_x, l, b_{\theta}, \tau_{d,x}, \tau_{d,\theta}]$. We use randomized LP's \cite{bubeck2020chasing}, to approximate the Steiner point of the polytope $\st{P}_t$ and select the corresponding oracle policy $\pi[\theta_t]$ as described in the meta-algorithm \ref{alg:ApiselOCO}.

\subsection{Simulation Results}

To test our algorithm in the adversarial setting we described in \secref{sec:intro}, we tested the algorithm against all $\theta^*$, which are a combinations of the test parameters given in Table. \eqref{par-table} and multiple noise random seeds, resulting in a total of $900$ experiments.

\begin{table}[t]
\caption{Initial Parameter Uncertainties and Range of Test Parameters}
\label{par-table}
\vskip 0.15in
\begin{center}
\begin{small}
\begin{sc}
\begin{tabular}{lcccr}
\toprule
Parameter & Uncertainty & Test Parameters \\
\midrule
$M$    & [0.1,5] & $\{1,2,4\}$  \\
$m$ & [0.1,1] & $\{0.1,0.2,0.4\}$ \\
$l$    & [0.05,1] & $\{0.1,0.2,0.4,0.6,1.0\}$ \\
$b_x$    & [0,20] & $\{0,10\}$\\
$b_\theta$     & [0,2] & $\{0\}$\\
\bottomrule
\end{tabular}
\end{sc}
\end{small}
\end{center}
\vskip -0.1in
\end{table}


\begin{figure*}[!t]
    \includegraphics[width=0.5\textwidth]{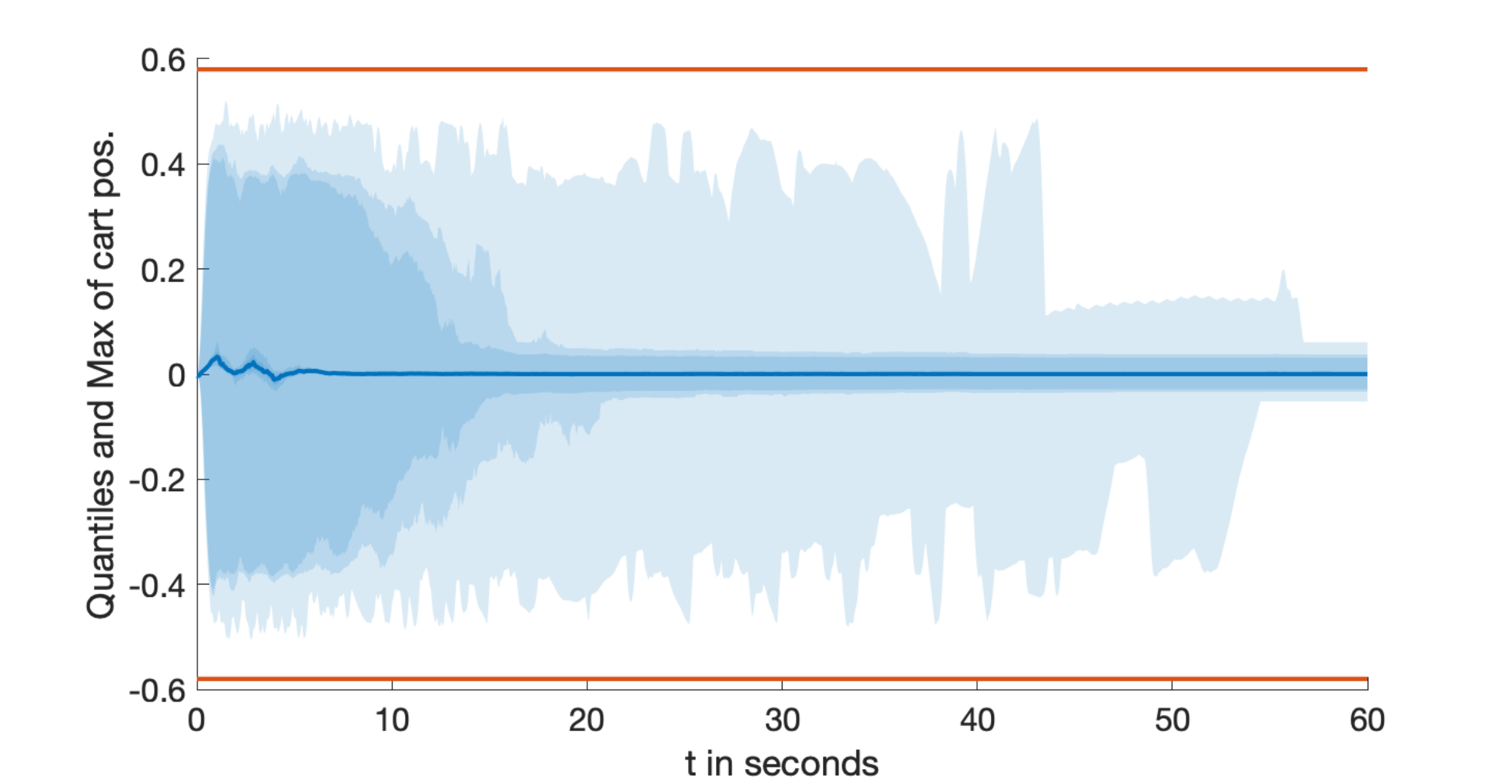}
    \includegraphics[width=0.5\textwidth]{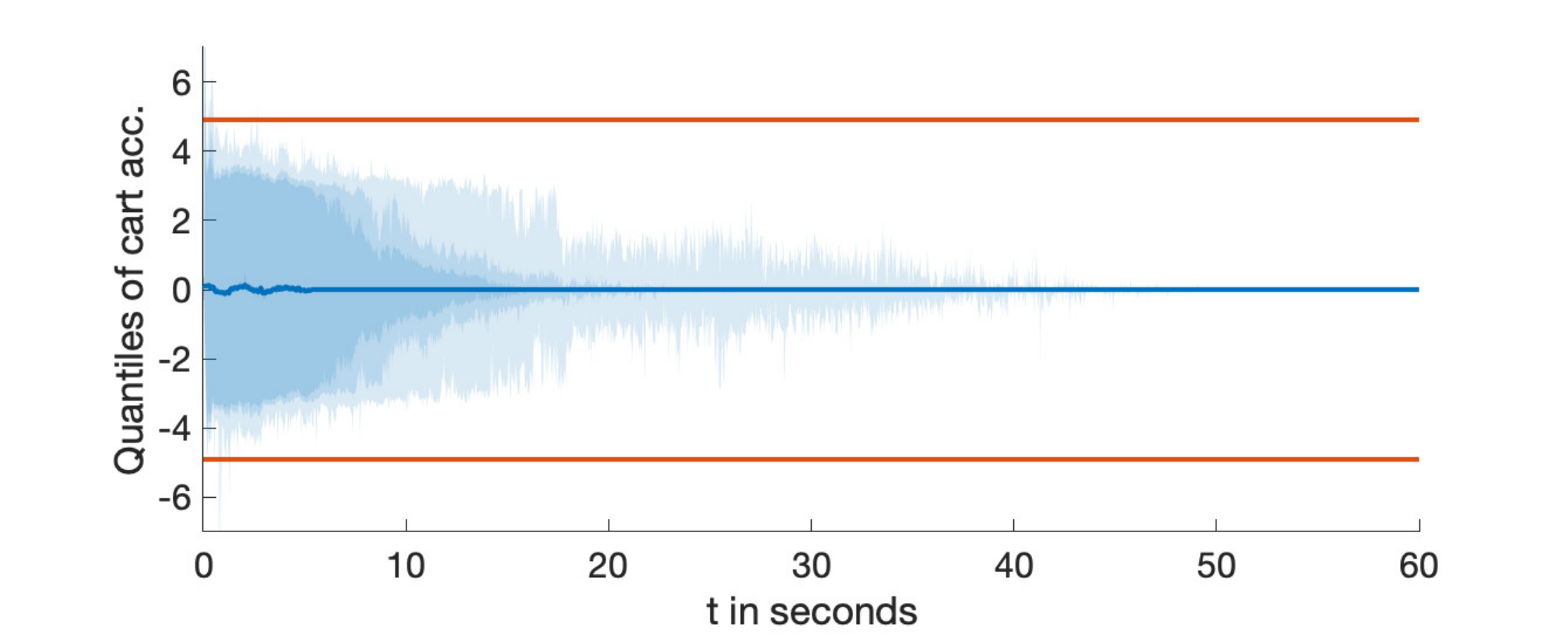}
  \vspace{-0.1in}
   \caption{ \textit{Safety and Cost Quantiles over 900 Experiments}: Left: min, max, median, top/bottom 5\% and 10\% quantiles of cart position $x_t$ over time $t$. The cart position respects the safety guarantees over all 900 experiments. Right: top/bottom 5\% and 10\% quantiles of cart acceleration $a_t$ over time $t$. The acceleration stays well within the desired acceleration bounds $0.5 g$. }
    \label{fig:ToyGraphMDPandPOMDP}
   \vspace{-.1in}
  \end{figure*}

As previously discussed, the safety policy $\pi_{\mathrm{safe}}$ overrides unsafe actions of the learning agent and thus hinders greatly exploration during online learning. As a key benefit of our approach, we show that the performance controller is practically unhindered by the safety controller: Fig. \ref{fig:onlinevsoffline} shows that in all 900 experiments, the online controller was able to learn the swing up consistently \textbf{under 60 seconds in a single episode}. In fact, in $80 \%$ of the experiments, the online algorithm learns to swing up the cart pole in \textbf{under 12 seconds.}
Comparing the online algorithm to the corresponding ideal true oracle controller $\pi[\theta^*]$, shows that the online controller $\cl{A}_\pi(\texttt{SEL})$ is only marginally slower than $\pi(\anon;\theta^*)$.
In addition to fast online learning, as shown in \figref{fig:ToyGraphMDPandPOMDP}, our approach satisfies over almost all (excluding $\approx 10$ experiments in the largest noise level setting) 900 experiments the desired safety bounds and acceleration bounds.


\section{Mistake Guarantees vs Sublinear regret}
\label{sec:regret_vs_stability}
 A common performance metric in online learning for control is phrased in terms of the regret $R(T)$. For our general problem setting, we show that sublinear regret does not imply finite mistake guarantees, however finite mistake guarantees do imply sublinear regret.
 

\subsection{Regret Definition}
Assume we are given some cost function $C:\cl{X}\times \cl{U}\mapsto \mathbb{R}^+$ and the system $x_{t+1}=f^*(t,x_t,u_t)$, $x_0=\xi_0$.
Assume that some "ideal" policy $\pi^*$ would generate the trajectory $x^*_t$, $u^*_t$, while the online algorithm $\cl{A}$ characterized by the sequence of policies, produces $x_t$, $u_t$. The optimal total cost of $J^{*}(T)$ at time $T$ is defined as $J^*(T):=\sum^{T}_{k=0} C(x^*_k,u^*_k).$
\noindent The regret $R(T)$ of $\cl{A}$ is usually refers to the sum of costs of the online algorithm up until time $T$ minus the sum of costs that the optimal policy $\pi^*$ would attain \footnote{Not the most wide-spread definition, but the most suited for adaptive control setting. See for example \cite{Zieman}}:
 \begin{align}
 R(T)&:= \sum^{T}_{k=0} C(x_k,u_k)-J^*(T).
\end{align}
Sub-linear regret is defined as follows
\begin{defn}
The regret $R(T)$ is called sublinear if $R(T) = o(T)$
or equivalently:
\begin{align}\label{eq:reg}
    && \lim_{T \rightarrow \infty} \frac{1}{T} R(T)=\lim_{T \rightarrow \infty} \frac{1}{T}\sum^{T}_{k=0} (C(x_k,u_k)-C(x^*_k,u^*_k)) &= 0, 
\end{align}
\end{defn}
\noindent The slower $R(T)$ grows with $T$, (for example $O(\log(T))$), the faster convergence we can guarantee to the above limit.

\subsection{Sub-Linear Regret does not imply bounded cost}\label{sec:asymp}
Sublinear regret is a common way to measure performance of online learning and control algorithms. Ideally, we would expect sublinear regret to subsume some more basic performance criteria such as boundedness of the online cost, i.e.: $\sup_k|C(x_k,u_k)-C(x^*_k,u^*_k)| <\infty$. However, simple derivations show that without additional assumptions, sublinear regret growth is not sufficient to show cost boundedness. The reason for that is intrinsic to the very definition of regret and simple real analysis arguments will suffice to demonstrate that.

\noindent Abbreviate $c_k : = C(x_k,u_k)$, $c^*_k : = C(x^*_k,u^*_k)$ and define the sequences
\begin{align}
    s_k &:= c_k-c^*_k\\
    m_k &:=\frac{1}{k} \sum^{k}_{j=0} (c_k-c^*_k) = \frac{1}{k} \sum^{k}_{j=1} s_k
\end{align}
Now, sublinear regret is defined as the condition $\lim\limits_{k \rightarrow \infty} m_k= 0$, while bounded cost considers the statement $\sup\limits_{k} |s_k| < \infty$.

The next counter examples, show that these statements are not related: The  first example shows that sublinear regret does \textit{not} imply finiteness of the sequence $|s_k|$; the second one shows that boundedness of $|s_k|$ does \textit{not} imply sublinear regret.

\begin{enumerate}
    \item $\lim\limits_{k \rightarrow \infty} m_k= 0 \centernot\implies  \sup\limits_{k} |s_k| < \infty$
    \begin{proof}
    Consider $s_k$, where $s_{e^{n}} = n$ and otherwise $0$. Define $\bar{n}(k):=\lfloor \log(k)\rfloor$, then 
    $$m_k \leq m_{\bar{n}(k)} = \frac{1}{e^{\bar{n}(k)}} \sum^{\bar{n}(k)}_{i=1} i = \frac{\bar{n}(k)(\bar{n}(k)+1)}{2e^{\bar{n}(k)}}.$$
    This shows $\lim_{k \rightarrow \infty} m_{\bar{n}(k)} = 0$, but $s_k$ is unbounded.
    \end{proof}
    \item For all $\varepsilon>0$: $\sup_{k} s_k - \inf_{k}s_k < \varepsilon \centernot\implies $ $m_k$ converges 
    \begin{proof}
    Define $s_k$ as the sequence
    \begin{align}
    s_k= (\underbrace{\underbrace{\underbrace{1,\delta,1,1,\delta,\delta}_{6},\underbrace{1,\dots,1}_{6},\underbrace{\delta,\dots,\delta}_{6}}_{18},\underbrace{1,\dots,1}_{18},\underbrace{\delta,\dots,\delta}_{18 },\dots,}_{2\times 3^n} \underbrace{1,\dots,1}_{2\times 3^n},\underbrace{\delta,\dots,\delta,}_{2\times 3^n }\dots  ),
    \end{align}
    with $\delta =1- \varepsilon$. From the above pattern, it becomes apparent that the corresponding $m_k$ satisfies for all $n$:
    \begin{align}
       & m_{2\times 3^{n}} = 1 - \varepsilon/2  & m_{4\times 3^{n}} = 1 - \varepsilon/4, 
    \end{align} 
    hence $m_k$ doesn't converge, yet $\sup_{k} s_k - \inf_{k}s_k < \varepsilon$.
    \end{proof}
\end{enumerate}

\begin{rem} The above arguments still hold if we change $s_k$ and $m_k$ to the definitions
\begin{align*}
    &s'_k := |C(x_k,u_k)-C(x^*_k,u^*_k)|
    &m'_k :=\frac{1}{k} \sum^{k}_{j=0} |C(x_k,u_k)-C(x^*_k,u^*_k)| = \frac{1}{k} \sum^{k}_{j=1} s'_k
\end{align*}
\end{rem}

\subsection{Sublinear regret does not imply finite mistakes, finite mistakes imply sublinear regret}
In our problem setting, the costs $C(x,u)$ are represented by $\cl{G}(x,u)$, which are $\{0,1\}$-valued cost functions. Moreover, we compare to an oracle-policy $\pi^*$ which guarantees at most $M^\pi_\rho$ mistakes i.e.: $\sum^\infty_{k=0} c^*_k \leq M^\pi_\rho$. 
Trivially, finite mistakes implies sublinear regret: If it holds that $\sum^{\infty}_{k=0} c_k < M$, then $\tfrac{1}{T}\sum^{T}_{k=0} (c_k-c^*_k) \leq \tfrac{M}{T} \rightarrow 0$. The opposite is however not true. Consider as an example the following sequence for $c_k$:
$$\bm{c}=\underbrace{\:0\:,\:1\:}_{2},\underbrace{\:0\:,\:0\:,\:0\:, \:1\:}_{4},\underbrace{ \:0\:, \:0\: , \:0\: , \:0\: , \:0\: , \:0\: , \:0\:  , \:1\:}_{8}, \dots,\underbrace{\:0\:,\dots,\:1\:}_{2^k},\dots$$
The total mistakes $\sum^{T}_{t=1}c_k = \cl{O}(\log(T))$ grow unbounded but we still have $\lim_{t\rightarrow \infty} m_k =0$.

\end{document}